\pgfplotsset{compat=1.15}
\numberwithin{equation}{section}
\newcommand{\nocontentsline}[3]{}
\let\origcontentsline\addcontentsline
\newcommand\stoptoc{\let\addcontentsline\nocontentsline}
\newcommand\resumetoc{\let\addcontentsline\origcontentsline}
\theoremstyle{plain}
\newtheorem{thm}{Theorem}[section]
\newtheorem{cor}[thm]{Corollary} 
\newtheorem{lem}[thm]{Lemma} 
\newtheorem{rem}[thm]{Remark} 
\newtheorem{prop}[thm]{Proposition} 
\theoremstyle{definition} 
\theoremstyle{remark}
\newcommand{\TODO}[1]{{\color{red}{#1}}}
\newcommand{\innerproduct}[2]{\langle #1, #2 \rangle}
\newcommand{\R}{\mathbb{R}}
\newcommand{\x}{x}
\newcommand{\uv}{v}
\newcommand{\f}{{f}}
\newcommand{\eps}{\varepsilon}
\definecolor{codegreen}{rgb}{0,0.6,0}
\definecolor{codegray}{rgb}{0.5,0.5,0.5}
\definecolor{codepurple}{rgb}{0.58,0,0.82}
\definecolor{backcolour}{rgb}{0.95,0.95,0.92}
\author{Thomas Rey}
\author{Tommaso Tenna}
\address[Thomas Rey]{Université Côte d’Azur, CNRS, LJAD, Parc Valrose, F-06108 Nice, France}
\address[Tommaso Tenna]{Université Côte d’Azur, CNRS, LJAD, Parc Valrose, F-06108 Nice, France}
\title{The Boltzmann equation for a multi-species inelastic mixture}
\date{}
\begin{document}

\begin{abstract}
    A granular gas is a collection of macroscopic particles that interact through energy-dissipating collisions, also known as inelastic collisions.  This inelasticity is characterized by a collision mechanics in which mass and momentum are conserved and kinetic energy is dissipated.  Such a system can be described by a kinetic equation of the Boltzmann type. Nevertheless, due to the macroscopic aspect of the particles, any realistic description of a granular gas should be written as a mixture model composed of $M$ different species, each with its own mass. We propose in this work such a granular multi-species model and analyse it, providing Povzner-type inequalities, and a Cauchy theory in general Orlicz spaces. We also analyse its large time behavior, showing that it exhibits a mixture analogue of the seminal Haff's Law.

    \medskip
    \textsc{2020 Mathematics Subject Classification:} 82B40, 
    76T25, 
    76P05. 
\end{abstract}

\keywords{Granular gases equation, inelastic Boltzmann equation, Povzner inequality, Boltzmann equation, Orlicz spaces, Haff's law}

\maketitle

\vspace{-1cm}
\tableofcontents

\section{Introduction}

Boltzmann type kinetic equations arise in many important applications as reliable models for the dynamics of gases which are not in thermodynamical equilibrium. Classical kinetic theory describes the behavior of dilute gas through a very large number of particles interacting via microscopic elastic collisions, see \cite{cercignani1988, cercignani1994} for a detailed discussion. On the contrary, several applications requires to model phenomena with low-density gases of dissipatively colliding particles, such as the study of spaceship re-entry in a dusty atmosphere or the description of planetary rings. In this framework, particles are ``microscopic" with respect to the scale of the system, but ``macroscopic" in the modeling, since they could represent grains of a given size, for instance a suspension of pollen or sand in a fluid. From a thermodynamical point of view, granular gases are open systems, since dissipative collisions imply a persistent loss of the mechanical energy of the grains with a consequent decay of the temperature. We refer to the book by Brilliantov and P\"oschel \cite{brilliantov2004} for a complete introduction to the subject.

The most important feature of particle interactions in granular flows is their inelastic behavior: the total kinetic energy is not preserved in the collisions. Therefore, to account for dissipative collisions, the Boltzmann equation has to be generalized. The first study of inelastic hard spheres-like model started with the seminal paper \cite{haff1983} and it has drawn attention for the unexpected physical behavior of the solutions to this system. Indeed, it involves different phenomena, as kinetic \textit{collapse} at the kinetic level and \textit{clustering} at hydrodynamic level \cite{goldhirsch1993}. The first mathematical contribution to granular gases is probably the paper by Benedetto, Caglioti and Pulvirenti in \cite{benedetto1997}, followed by many other works \cite{alonso2009existence, bobylevcarrillogamba2000, bobylev2004, mischlermouhot2006II, mischlermouhot2006}. The rigorous study of the hydrodynamic limit for the inelastic Boltzmann system is still not complete, with few rigorous results \cite{jabin2017, alonsolodstristani2020, alonsolodstristani2022}. For a review on inelastic Boltzmann equation, we suggest the review by Villani \cite{villani2006} and the more recent one \cite{carrillo2020recent}.\\
An accurate description of physical problems involves the presence of particles having different masses, especially when dealing with grains. In the elastic framework, the first derivation of the Boltzmann equation for gas mixtures was proposed by Chapman and Cowling in \cite{chapman1939} and more recently, the analytical setting has been studied in \cite{briant2016stability, briant2016boltzmann}. The analytical study of the inelastic framework is still incomplete. Most of the literature is focused on the hydrodynamics property of inelastic mixtures, see for instance \cite{garzo2002, garzo2007, serero2006, serero2015, serero2011}.

The aim of this work is to develop a theory for the Boltzmann equation of inelastic hard sphere mixtures.  
The rest of the paper is organized as follows. Section \ref{Section_Multispecies} is devoted to the presentation of the main properties of the model, describing both the microscopic interactions and the macroscopic behavior of the solutions. The functional framework is presented in Section \ref{Section_Homogeneous}, with the entropy inequality and the study of the kinetic energy dissipation. In Section \ref{Section_Povzner_inequality} we extend the result for the Povzner-type inequality of \cite{gamba2004} to the multi-species case. The existence and uniqueness of the solutions are proven in Section \ref{Section_MischlerMouhot}, inspired by the work of Mischler, Mouhot and Ricard \cite{mischlermouhot2006}. Finally, in Section \ref{Section_Haffs_Law} we give a characterization of the long-time behavior of the solutions, proving a generalized version of the Haff's law based upon the use of entropy, in the spirit of \cite{alonso2013}.

\section{The Multi-species Granular Gases Equation}
\label{Section_Multispecies} 
Consider a mixture of $M$ macroscopic species, each described by a distribution function $f_i(t,x,v)$, where the mass of a particle of component $i$ is $m_i >0$. The multi-species Boltzmann equation for granular gases in absence of external forces reads as:
\begin{equation}
	\label{granular_gases}
	\begin{cases}
		\partial_t f_i + \uv \cdot \nabla_\x f_i = \displaystyle \frac{1}{\eps} \mathcal{Q}^{\, inel \,}_i(\f), \qquad \f=(f_1,\dots,f_M),\\
		\mathcal{Q}^{inel}_i (\f) = \displaystyle \sum_{j=1}^N \mathcal{Q}_{i,j} (f_i,f_j),\\ 
		f_i(0,\x,\uv)= f_i^0 (\x,\uv).
	\end{cases}
\end{equation}
In this framework $\mathcal{Q}^{\, inel \,}_i(\f)$ is the inelastic collision operator, which describes the dissipative microscopic collision dynamics. This energy dissipation might be due to a non-perfect restitution of the energy during collisions or to the roughness of the surface, but it does not affect the conservation of momentum. The parameter $\eps >0$ is the \textit{Knudsen number}, which is the ratio between the mean free path of particles before a collision and the length scale of observation. In other terms, it governs the frequency of collisions: a small Knudsen number describes more frequent collisions.\\
The collision operator $\mathcal{Q}^{\, inel \,}_i$ is given by a sum of different \textit{collision operators} $\mathcal{Q}_{i, j}$, referred to as self-collision operators if $i=j$ (interactions between particles of the same species) or cross-collision operators if $i \neq j$ (interactions between particles of different species).

\subsection{Microscopic dynamics}
\label{Subsection_Micro_Dynamics}
The microscopic dynamics is fundamental to understand the formulation of the Boltzmann equation for granular gases and to derive an explicit formulation for \eqref{granular_gases}. Indeed, the dissipation of kinetic energy implies important differences with respect to the classical Boltzmann equation. We need the following reasonable assumptions:
\begin{enumerate}
    \item The particles interact via \textit{binary} collisions. This means that collisions between $3$ or more particles can be neglected.
    \item These binary collisions are localized in space and time.
    \item Collisions preserve mass for each species, total mass and total momentum, dissipating a fraction $1-e$ of the total kinetic energy, where $e \in [0,1]$ is the so called \textit{restitution coefficient}:
    \begin{equation}
    	\label{microscopic_dynamics}
        \begin{cases}
            m_i v' + m_j v'_* = m_i v + m_j v_*,\\
            \displaystyle m_i |v'|^2 + m_j |v'_*|^2 - m_i |v|^2 - m_j |v_*|^2 = -\frac{m_i\,m_j}{(m_i+m_j)}\frac{1-e^2}{2} (1-\innerproduct{\nu}{\sigma})|v-v_*|^2 \leq 0,
        \end{cases}
    \end{equation}
    with $\sigma \in \mathbb{S}^{d-1}$ and $\nu:=\displaystyle \frac{v-v_*}{|v-v_*|}$. 
\end{enumerate}
\begin{rem}
	It is possible to consider also a non-constant restitution coefficient, usually depending on the relative velocity $|v-v_*|$ of the colliding particles. More precisely, it should be chosen close to the elastic case $e=1$ for small relative velocities (when the dissipation is negligible) and decaying towards zero for large relative velocities.
\end{rem}
The conservation of total momentum and the dissipation of total energy yield the following parametrizations for the post-collisional velocities $\left((v^{ij})', (v^{ij}_*)'\right)$ in terms of the pre-collisional ones $(v,v_*)$:
\begin{itemize}
\item The $\omega$-representation
\begin{equation}
\label{omega_representation}
    \begin{aligned}
        (v^{ij})'=v - \frac{m_j}{m_i+m_j} (1+e) ((v-v_*) \cdot \omega)\, \omega,\\
        \displaystyle (v^{ij}_*)'=v_* + \frac{m_i}{m_i+m_j} (1+e) ((v-v_*) \cdot \omega)\, \omega,\\
    \end{aligned} 
\end{equation}
where $\omega \in \mathbb{S}^{d-1}$.
\item The $\sigma$-representation
\begin{equation}
\label{sigma_representation}
    \begin{aligned}
        (v^{ij})'=\frac{m_i v+ m_j v_*}{m_i+m_j} + \frac{2m_j}{m_i+m_j} \frac{(1-e)}{4} (v-v_*) + \frac{2m_j}{m_i+m_j} \frac{1+e}{4} |v-v_*| \sigma,\\
        (v^{ij}_*)'=\frac{m_i v+ m_j v_*}{m_i+m_j} - \frac{2m_i}{m_i+m_j} \frac{(1-e)}{4} (v-v_*) - \frac{2m_i}{m_i+m_j} \frac{1+e}{4} |v-v_*| \sigma,\\
    \end{aligned} 
\end{equation}
where $\sigma \in \mathbb{S}^{d-1}$, which is related to $\omega$ according to
\begin{equation}
    (g \cdot \omega)\, \omega = \frac{1}{2} ( g- |g| \sigma).
\end{equation}
\end{itemize}
We would like to underline that the restitution coefficient $e=1$ corresponds to the case of the classical multi-species elastic collision dynamics \cite{briant2016boltzmann}.\\
To derive the strong form of the Boltzmann equation it is necessary to consider the expressions of the pre-collisional velocities $('(v^{ij}),\, '(v^{ij}_*))$, in terms of the post-collisional ones $(v, v_*)$:
\begin{equation}
\label{pre_collisional}
    \begin{aligned}
        '(v^{ij})=v - \frac{m_j}{m_i+m_j} \frac{1+e}{e} ((v-v_*) \cdot \omega)\,\omega,\\
        \displaystyle '(v^{ij}_*)=v_* + \frac{m_i}{m_i+m_j} \frac{1+e}{e} ((v-v_*) \cdot \omega)\,\omega,\\
    \end{aligned} 
\end{equation}
In this case $'(v^{ij})$ and $'(v^{ij}_*)$ do not coincide with $(v^{ij})'$ and $(v^{ij}_*)'$. This is due to the irreversibility of the collisions, in accordance to the dissipative behavior of granular gases. It implies that the transformation $(v^{ij},v_*^{ij},\sigma) \to ((v^{ij})',(v_*^{ij})',\sigma)$ is not an involution, differently from the elastic case.\\
Henceforth, we will avoid the superscripts $v^{ij}$ in the notations and we will simply refer to the pre-collisional velocities as $(v, v_*)$ and to the post-collisional ones as $(v', v_*')$. \\

\noindent \textbf{Remark.} It is important to observe that this model is meaningful also in dimension $1$. This is not the case of elastic collisions, for which one-dimensional collisions become trivial. Indeed, such collisions are only $\{v',v'_*\}=\{v,v_*\}$, which in the classical Boltzmann equation should result only in a change of direction. However, since particles are indistinguishable, the collision dynamics are exactly the same if particle velocities are swapped or preserved. In particular in $1$D the collision operator $\mathcal{Q}_i (f) \equiv 0$.\\ 
On the contrary, for the inelastic case we have
\begin{equation*}
    \{v',v'_*\}=\{v,v_*\} \quad \text{ or } \quad \left \{ \frac{v+v_*}{2} \pm \frac{e}{2} \frac{m_k}{m_i+m_j} (v-v_*) \right \}
\end{equation*}
depending on the value of $\sigma \in \{-1, +1 \}$, where $k \in \{i,j\}$.

\subsection{Geometry of inelastic collisions for mixtures}
We have pointed out that the conservation of total momentum and the dissipation of energy are consequences of the relations \eqref{sigma_representation}. Let us rewrite the expressions of the post-collisional velocities as
\begin{equation}
\begin{aligned}
    &v'=\Omega_- + \frac{1+e}{2}\frac{m_j}{m_i+m_j}|v-v_*|\sigma, \\ 
    &v_*'=\Omega_+ - \frac{1+e}{2}\frac{m_i}{m_i+m_j}|v-v_*|\sigma,
\end{aligned} 
\end{equation}
defining $\Omega_+$ and $\Omega_-$ as the new center of mass of the particles $i$ and $j$ respectively:
\begin{equation}
\begin{aligned}
    \Omega_- :=\frac{m_i}{m_i+m_j}v+\frac{m_j}{m_i+m_j}v_* + \frac{2m_j}{m_i+m_j} \frac{(1-e)}{4} (v-v_*),\\
    \Omega_+ :=\frac{m_i}{m_i+m_j}v+\frac{m_j}{m_i+m_j}v_* - \frac{2m_i}{m_i+m_j} \frac{(1-e)}{4} (v-v_*).
\end{aligned}
\end{equation}
Using this expression of $\Omega_\pm$ we can also rewrite the pre-collisional velocities as 
\begin{equation}
\begin{aligned}
    v=\Omega_- +\frac{m_j}{m_i+m_j}(v-v_*)-\frac{2m_j}{m_i+m_j} \frac{(1-e)}{4} (v-v_*), \\ v_*= \Omega_+ -\frac{m_i}{m_i+m_j}(v-v_*)+\frac{2m_i}{m_i+m_j} \frac{(1-e)}{4} (v-v_*).
 \end{aligned}
\end{equation}
Therefore, in Fig. \ref{geometrical_configuration} it is shown a possible configuration of the pre- and post-collisional velocities in $2$D in the phase space.
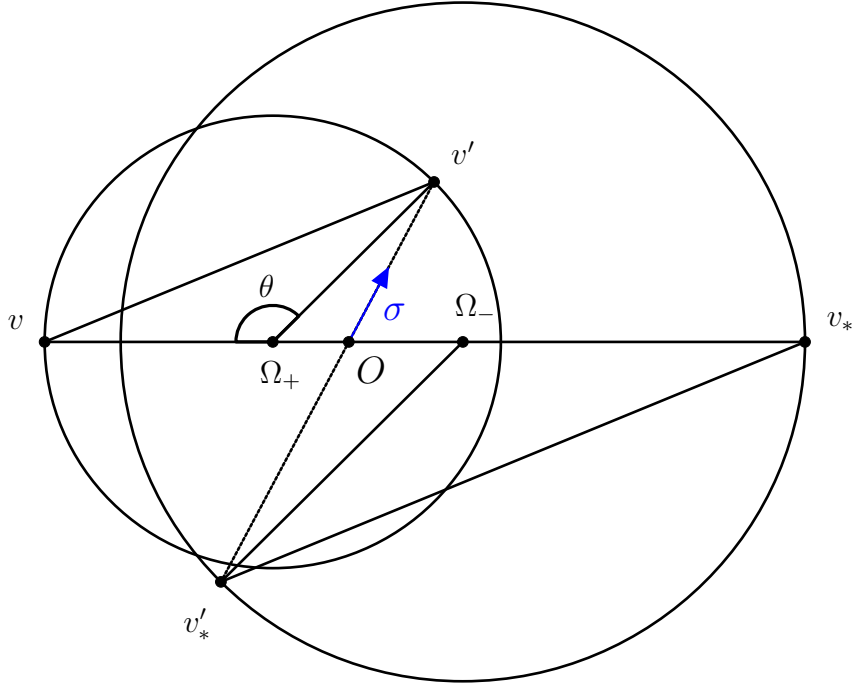
\begin{figure}[h!]
\begin{center}          
    \begin{tikzpicture}[line cap=round,line join=round,>=triangle 45,x=1cm,y=1cm]
    	\draw [shift={(-2,0)},line width=1.2pt] (0,0) -- (45:0.4841163593948704) arc (45:180:0.4841163593948704) -- cycle;
    	\draw [line width=1pt] (-2,0) circle (3cm);
    	\draw [line width=1pt] (0.5,0) circle (4.5cm);
    	\draw [line width=1pt,dash pattern=on 1pt off 1pt] (0.12132034355964283,2.121320343559643)-- (-2.6819805153394642,-3.1819805153394642);
    	\draw [line width=1pt] (-5,0)-- (5,0);
    	\draw [line width=1pt] (0.12132034355964283,2.121320343559643)-- (-5,0);
    	\draw [line width=1pt] (-2.6819805153394642,-3.1819805153394642)-- (5,0);
    	\draw [line width=1pt] (0.5,0)-- (-2.6819805153394642,-3.1819805153394642);
    	\draw [line width=1pt] (-2,0)-- (0.12132034355964283,2.121320343559643);
    	\draw [->,line width=0.8pt,color=blue] (-1,0) -- (-0.47,1);
    	\begin{large}
    		\draw [fill=black] (-5,0) circle (2pt);
    		\draw[color=black] (-5.377021082128359,0.267) node {$v$};
    		\draw [fill=black] (5,0) circle (2pt);
    		\draw[color=black] (5.470362096437763,0.267) node {$v_*$};
    		\draw [fill=black] (-2,0) circle (2pt);
    		\draw[color=black] (-1.8945600226063182,-0.43866972808872496) node {$\Omega_+$};
    		\draw [fill=black] (0.5,0) circle (2pt);
    		\draw[color=black] (0.668079954065469,0.4511710628973322) node {$\Omega_-$};
    		\draw [fill=black] (0.12132034355964283,2.121320343559643) circle (2pt);
    		\draw[color=black] (0.5049926845193162,2.508645882189719) node {$v'$};
    		\draw [fill=black] (-2.6819805153394642,-3.1819805153394642) circle (2pt);
    		\draw[color=black] (-2.9922334671842637,-3.740191156336913) node {$v_*'$};
    		\draw [fill=black] (-1,0) circle (2pt);
    		\draw[color=black] (-0.7084749420888858,-0.37226990318511783) node {\Large $O$};
    		\draw[color=black] (-2.081853110122214,0.7390931750640916) node {$\theta$};
    		\draw[color=blue] (-0.4,0.4) node {\Large $\mathbf{\sigma}$};
    	\end{large}
    \end{tikzpicture}
\end{center}
\caption{Geometrical configuration of the inelastic collision between two particles with different masses in the phase space.} \label{geometrical_configuration}
\end{figure}
We observe that the points $v$ and $v_*$ (describing the velocity for particle $i$) belong to the plane defined by the center of mass $\Omega_+$ and $\text{Span}(\sigma, v-v_*)$ and, similarly, $v'$ and $v'_*$ (describing the velocity for particle $j$) belong to the plane defined by the center of mass $\Omega_-$ and $\text{Span}(\sigma, v-v_*)$. Moreover, we can notice that
\begin{equation}
    \frac{m_i+m_j}{m_j}|\Omega_- - v'|=\frac{m_i+m_j}{m_i}|\Omega_+-v'_*|=\frac{1+e}{2}|v-v_*|,
\end{equation}
namely we have
\begin{equation*}
    v' \in \mathcal{B}\left(\Omega_-, \frac{m_j}{m_i+m_j}\frac{1+e}{2}|v-v_*| \right) \qquad \text{ and } \qquad v'_* \in \mathcal{B}\left(\Omega_+, \frac{m_i}{m_i+m_j}\frac{1+e}{2}|v-v_*| \right),
\end{equation*}
where $\mathcal{B}(x,r)$ is the $d$-ball centered in $x$ of radius $r$, as shown in Fig. \ref{geometrical_configuration}.

\subsection{Weak form}
\label{Subsection_Weak_Form} 

\subsubsection{Mono-species Collision Operator}
The operators $\mathcal{Q}_{i,i}^{inel}$ correspond to the classical Boltzmann collision operators for granular gases, since they describe interactions between particles of the same species. Starting from the definition of velocities obtained in \eqref{sigma_representation} and using the microscopic hypotheses, we derive the \textit{weak form} in the $\sigma$-representation, considering a test function $\psi: \R^d \to \R$.
We assume as in \cite{rey2012} that the collision kernel has the form
\begin{equation}
    B(|u|, \cos \theta, \mathcal{E}(f))=\Phi(|u|) \, b(u, \cos\theta, \mathcal{E}(f)),
\end{equation}
where $u=v-v_*$ is the relative velocity, $\theta$ is the angle between $\sigma$ and $u$ and $\mathcal{E}$ is the total kinetic energy, defined as
\begin{equation}
    \mathcal{E}(f) = \sum_{i=1}^M \int_{\R^d} m_i f_i(v) |v|^2 \, dv. 
\end{equation}
This choice automatically implies the microreversibility assumption if $b(u, \cos\theta, \mathcal{E}(f)) = b(-u, \cos\theta, \mathcal{E}(f))$.\\
Thus we obtain
\begin{equation}
	\begin{aligned}
	\label{weak_monospecies_1}
    \int_{\R^d} \mathcal{Q}_{i,i}^{inel} &(f,f) (v) \psi(v) \, dv= \\&=\int_{\R^d \times \R^d \times \mathbb{S}^{d-1}} f f_* (\psi'-\psi) B(|v-v_*|, \cos \theta, \mathcal{E}(f)) d\sigma \, dv \, dv_*.
    \end{aligned}
\end{equation}
If we snap $v$ and $v_*$ in \eqref{weak_monospecies_1} for a fixed $\sigma$, then $v'$ and $v'_*$ are swapped according to the $\sigma$-representation given in \eqref{sigma_representation}. Thus we obtain 
\begin{equation}
	\begin{aligned}
	\label{weak_monospecies_2}
    \int_{\R^d} \mathcal{Q}_{i,i}^{inel} &(f,f) (v) \psi(v) \, dv= \\ &=\int_{\R^d \times \R^d \times \mathbb{S}^{d-1}} f_* f (\psi'_*-\psi_*) B(|v-v_*|, \cos \theta, \mathcal{E}(f)) d\sigma \, dv \, dv_*,
    \end{aligned}
\end{equation}
which, combined with \eqref{weak_monospecies_1}, gives the following weak form
\begin{equation}
	\begin{aligned}
	\label{weak_monospecies}
	\int_{\R^d} \mathcal{Q}_{i,i}^{inel} &(f,f) (v) \psi(v) \, dv= \\ &=\frac{1}{2} \int_{\R^d \times \R^d \times \mathbb{S}^{d-1}} f_* f (\psi'+\psi'_*-\psi-\psi_*) B(|v-v_*|, \cos \theta, \mathcal{E}(f)) d\sigma \, dv \, dv_*.
	\end{aligned}
\end{equation}
If we consider the so-called \textit{generalized hard spheres} case, the collision kernel becomes
\begin{equation}
    B(|u|, \cos \theta, \mathcal{E}) = |u|^\lambda b(\cos \theta) \mathcal{E}^\gamma,
\end{equation}
where $\lambda \in [0,1]$ and the angular cross section $b$ verifies
\begin{equation}
    0 < \beta_1 \leq b(x) \leq \beta_2 < \infty, \qquad \forall \, x \in [-1,1].
\end{equation}
In particular, the choice $\lambda=0$ leads to the simplified \textit{Maxwellian pseudo-molecules} case and $\lambda=1$ to the more relevant \textit{hard spheres molecules} case. Recalling the definition of the pre-collisional velocities \eqref{sigma_representation}, this weak formulation of the mono-species operator leads to the conservation of the mass for each species. 
We can derive the weak form in the $\omega$-representation as well, following the same procedure and obtaining
\begin{equation}
	\begin{aligned}
  	\label{weak_monospecies_omega}
	\int_{\R^d} \mathcal{Q}_{i,i}^{inel} &(f,f) (v) \psi(v) \, dv= \\  &=\frac{1}{2} \int_{\R^d \times \R^d \times \mathbb{S}^{d-1}} f_* f (\psi'+\psi'_*-\psi-\psi_*) \tilde{B}(|v-v_*|, \cos \theta, \mathcal{E}(f)) d\omega \, dv \, dv_*,  
	\end{aligned}
\end{equation}
where now the post-collisional velocities are computed by \eqref{omega_representation}, see also \cite{carrillo2020recent}.

\subsubsection{Bi-species Collision Operator}
We would like to characterize the bi-species collision operators $\mathcal{Q}_{i, j}^{inel}$ for $i \neq j$. Therefore, considering the definition \eqref{sigma_representation}, in the same spirit of the mono-species framework, we obtain the following formulation of the collision operators in the $\sigma$-representation
\begin{equation}
	\label{weak_bispecies}
	\int_{\R^d} \mathcal{Q}_{i,j}^{inel} (f,g) (v) \psi(v) \, dv= \int_{\R^d \times \R^d \times \mathbb{S}^{d-1}} f g_* (\psi'-\psi) B(|v-v_*|, \cos \theta, \mathcal{E}(f)) d\sigma \, dv \, dv_*.
\end{equation}
In addition, if we consider both the bi-species collision operators, we can give the following characterization
\begin{equation}
	\label{weak_bispecies_sum}
	\begin{aligned}
		&\int_{\R^d} \mathcal{Q}_{i,j}^{inel} (f,g) (v) \psi(v) \, dv + \int_{\R^d} \mathcal{Q}_{j,i}^{inel} (g,f) (v) \phi(v) \, dv=\\&= \int_{\R^d \times \R^d \times \mathbb{S}^{d-1}} f g_* (\psi'+\phi'_*-\psi-\phi_*) B(|v-v_*|, \cos \theta, \mathcal{E}(f)) d\sigma \, dv \, dv_*,
	\end{aligned}
\end{equation}
for any $\psi, \phi: \R^d \to \R$. The collision kernel is assumed again of \textit{generalized hard sphere} type, with the same considerations of the previous section. \\
We recall that the weak formulations for mixtures involve the same collision kernel of the mono-species operator. We observe that the choice of $\psi(v)=m_i\, v$ and $\phi(v)=m_j \, v$ in \eqref{weak_bispecies_sum} leads to the conservation of the total momentum.
In this case too, it is possible to derive the weak form in the $\omega$-representation as
\begin{equation}
	\label{weak_bispecies_sum_omega}
	\begin{aligned}
		&\int_{\R^d} \mathcal{Q}_{i,j}^{inel} (f,g) (v) \psi(v) \, dv + \int_{\R^d} \mathcal{Q}_{j,i}^{inel} (g,f) (v) \phi(v) \, dv=\\&= \int_{\R^d \times \R^d \times \mathbb{S}^{d-1}} f g_* (\psi'+\phi'_*-\psi-\phi_*) \tilde{B}(|v-v_*|, \cos \theta, \mathcal{E}(f)) d\omega \, dv \, dv_*.
	\end{aligned}
\end{equation}

\subsection{Strong form}
As in \cite{carrillo2020recent}, we would like to derive a strong form of $\mathcal{Q}^{inel}$ in the $\omega$-representation. It is fundamental to understand that the derivation is completely different from the elastic case, since the transformation $(v,v_*,\sigma) \to (v',v'_*,\sigma)$ is not an involution, thus we have to go through the $\omega$-representation.\\
The derivation both for the mono-species collision operator and for the bi-species one is similar to the computation performed for the single-species granular gases equation and it is based on the expression of the pre-collisional velocities \eqref{pre_collisional} (see \cite{carrillo2020recent} for more details). Given a restitution coefficient $e=e(|u|)$ depending on the relative velocity of the particles, we first assume that
\begin{equation}
	J(|u|, \cos \theta) \neq 0, \qquad \forall \, \theta.
\end{equation}
Starting from \eqref{pre_collisional} in the $\omega$-representation, for a constant restitution coefficient $e \equiv \text{const}$, we obtain $J=e$.
Indeed, noticing that 
\begin{align*}
	|v'-v'_*|^2=|v-v_*|^2+\left(\frac{1}{e^2} -1 \right) ((v-v_*) \cdot \omega),
\end{align*}
we observe that the computation of the Jacobian is independent from the mass species and it is identical to the single-species case discussed in \cite{carlen2009, carrillo2020recent}.\\
In the case of \textit{generalized hard spheres}  collision kernel, it yields the following representation
\begin{equation}                   
    \mathcal{Q}^{inel}_{i,j} (f^i,f^j)(v) =\mathcal{Q}^{inel,+}_{i,j}(f^i,f^j)(v)-f^i(v)\, L(f^j)(v),
\end{equation}
where the loss part is given by
\begin{equation}
    L(f)(v)=\int_{\R^d \times \mathbb{S}^{d-1}} \tilde{B}(|v-v_*|, \cos \theta, \mathcal{E}(f)) f_* d\omega dv_*
\end{equation}
and the gain part is given by
\begin{equation}
    \mathcal{Q}^{inel,+}_{i,j} (f^i,f^j) (v) = \int_{\R^d \times \mathbb{S}^{d-1}} \tilde{\Phi}^+_e (|v-v_*|, \cos \theta) \tilde{b}^+_e(\cos \theta) \frac{\mathcal{E}^\gamma}{J(|v-v_*|,\cos \theta)} {'f^i} \,{'f^j_*} d\omega dv_*,
\end{equation}
where
\begin{equation}
    \tilde{\Phi}^+_e (r,s) = \Phi \left(\frac{r}{e} \sqrt{e^2 + (1-e^2) s^2} \right) = \left(\frac{r}{e} \sqrt{e^2 + (1-e^2) s^2} \right)^\lambda
\end{equation}
and
\begin{equation}
    \tilde{b}^+_e(s) = \tilde{b} \left( \frac{s}{\sqrt{e^2 + (1-e^2)s^2}} \right)
\end{equation}
Thus, as in the classical Boltzmann equation, the operator $\mathcal{Q}^{inel,+}_{i,j} (f^i,f^j) (v)$ is usually referred to as \textit{gain term}, since it expresses the number of particles of velocity $v$ created by collision of particles of pre-collisional velocities given by \eqref{pre_collisional}, whereas $f^i(v)L(f^j)(v)$ is usually known as \textit{loss} term, because it models the loss of particles of pre-collisional velocity equal to $v$.\\
It is possible to establish the strong form also in the $\sigma$-representation by changing variable in the operator, to find the expressions for the loss and gain terms and we refer to \cite{carlen2009} for more details. In the following sections, we will always use the weak formulation of the equation.

\subsection{Conservation properties}
As pointed out in Section  \ref{Subsection_Weak_Form}, using the weak formulation of the collision operators, we are able to recover some conservation properties of the multi-species Boltzmann equation for granular gases. In particular, the choice $\psi(v)$ in \eqref{weak_monospecies} and \eqref{weak_bispecies} allows us to recover the conservation of the mass of the single species $i$. Furthermore, for the bi-species case, considering $\psi(v)=m_i\, v$ and $\phi(v)=m_j \, v$ in \eqref{weak_bispecies_sum}, we recover the conservation of the total momentum.\\
Let us define the following macroscopic quantities from the distribution function
\begin{equation}
\label{macro_quantities}
    n_i= \int d\uv f_i(\uv), \qquad \rho \uv= \sum_{i} \int d\uv \, m_i f_i(\uv),
\end{equation}
the number densities $n_i$, the total density $\rho$ and the flow velocity $\uv$.

\subsubsection{Macroscopic properties of the granular gases mixtures operator}
The main difference between a mixture of granular gases and a mixture of perfect gases is the dissipation of the total kinetic energy. Indeed, using the weak formulation of the inelastic collision operators \eqref{weak_monospecies}-\eqref{weak_bispecies}, we obtain
\begin{equation}
	\sum_{i,j} \int_{\R^d} \mathcal{Q}_{i,j}^{inel} (v) \begin{pmatrix}
		1 \\ m_i \, v \\ \displaystyle m_i |v|^2/2
	\end{pmatrix} dv = 
	\begin{pmatrix}
		0 \\ 0 \\ -\displaystyle\sum_{i,j} D_{ij}(f_i,f_j) 
	\end{pmatrix}
\end{equation}
where $D_{ij}(f_i,f_j) \geq 0$ are the energy dissipation functionals which depend only on the collision kernel. They read as
\begin{equation}
    D_{ij}(f_i,f_j) := \int_{\R^d \times \R^d} f_i\,f_{j*} \Delta_{ij} (|v-v_*|, \mathcal{E}(f)) dv\, dv_*,
\end{equation}
where $\Delta_{ij}$ are the \textit{energy dissipation rates}, computed using the weak formulation and the microscopic dynamics \eqref{microscopic_dynamics}, given by
\begin{equation}
    \Delta_{ij}(|u|, \mathcal{E}) := \frac{m_i\,m_j}{(m_i+m_j)} \frac{1-e^2}{2} \int_{\mathbb{S}^{d-1}} |u \cdot \omega|^2 B(|u|, \cos \theta, \mathcal{E}) \, d\omega \geq 0, \qquad \forall \, e \in [0,1].
\end{equation}
In the following section we show that it is not possible to obtain any entropy dissipation for this equation. However, it is possible to exploit the particular structure of the entropy functional to recover the Haff's cooling law \cite{haff1983}, as we will show in Section \ref{Section_Haffs_Law}.\\
Formally, the dissipation of kinetic energy implies that as $\eps \to 0$, then the particle distribution functions $f^i$ converge towards Dirac distributions centered on the mean velocity $u_i$, which is the same for each species ($u_i \equiv u_j$):
\begin{equation}
\label{dirac_convergence}
    f^i(t,x,v)=\rho_i(x) \,\delta_0 (v-u_i(x)), \qquad \forall \, (x,v) \in \R^d \times \R^d.
\end{equation}
In the multi-velocity case it is possible to have a limit distribution consisting of a sum of Dirac deltas centered on a different mean velocity for each species.

\section{The Homogeneous Equation}
\label{Section_Homogeneous}
Let us consider the following homogeneous multi-species inelastic Boltzmann equation,
\begin{equation}
    \label{homogeneous_BEs}
	\begin{cases}
		\partial_t f_i = \displaystyle \mathcal{Q}^{\, inel \,}_i(\f),\\
		\mathcal{Q}^{inel}_i (\f) = \displaystyle \sum_{j=1}^N \mathcal{Q}_{i,j} (f_i,f_j),\\ 
		f_i(0,\x,\uv)= f_{i}^{\,0} (\x,\uv).
	\end{cases}
\end{equation}
The Boltzmann-type equation is complemented with the initial condition $f^{\,0}$, which is supposed to satisfy the following moment conditions
\begin{equation}
	\label{hypothesis_initial_moments}
	\int_{\R^d} m_i f^{\,0}_i(v)\, dv = 1, \qquad \sum_{i=1}^M \int_{\R^d} m_i f^{\,0}_i (v) v \, dv = 0.
\end{equation}
Note that the assumption \eqref{hypothesis_initial_moments} is not restrictive, thanks to the conservation properties.
\subsection{Functional Framework}
Let us present some functional spaces needed in the paper. We denote, for any $q \in \mathbb{N}$, the Banach space
\begin{equation}
    L^1_q = \left \{  f: \R^d  \to \R^M \,\, \text{ measurable } \, \text{ s.t. } \, ||f||_{L^1_q} := \displaystyle \sum_{i=1}^M \int_{\R^d} |f_i(v)|(1+m_i^{q/2}|v|^q)\,dv < \infty \right \}.
\end{equation}
Note that we have incorporated the species mass inside the usual chinese bracket.\\
We also denote by $W_q^{1,1}$ the weighted Sobolev space defined as
\begin{equation}
    W_q^{1,1} := \left \{ f \in L^1_q, \, \nabla f \in L^1_q \right \}.
\end{equation}
We can now introduce the space $BV_q(\R^d)$ of weighted bounded variation functions for $q \in \mathbb{N}$ as the set of the weak limits in $\mathcal{D}'(\R^d)$ of sequences of smooth functions bounded in $W_q^{1,1}$. Finally, we denote by $\mathcal{M}^1(\R^d)$ the space of normalized probability measures on $\R^d$.\\
For the sake of simplicity, we define 
\begin{equation}
    \langle v \rangle_i = (1+m_i|v|^2)^{1/2},
\end{equation}
and we notice that the weighted norm $|| \cdot ||_{L^1_q}$ is equivalent to the weighted norm  induced by $\langle \cdot \rangle^q$.

\subsection{Entropy for Multi-species Granular Gases}
The dissipation of energy implies an anomalous behavior for the solution of multi-species granular gases equation, because it implies an explosive behavior for the solution.\\
As we pointed out, in the case of mixtures the distribution function $f^i$ relative to species $i$ converges formally towards a Dirac delta, centered in the mean velocity $u_i$.
As for the single-species case, the granular framework makes impossible to recover any classical results for entropy. Indeed, the lack of conservation and the consequent loss of involutive properties in the transformation $(v,v_*,\sigma) \to (v',v'_*,\sigma)$ imply that it is not possible to obtain any entropy dissipation for this equation. For the sake of simplicity, we restrict to the case of two-species mixtures.\\
We first define the entropy functional for gas mixtures, consistent with the classical Boltzmann $\mathcal{H}$-functional
\begin{equation}
    \mathcal{H}(f) = \int_{\R^d} f_1(v)\log f_1(v)\,dv + \int_{\R^d} f_2(v)\log f_2(v)\,dv.
\end{equation}
In order to avoid unessential multiplicative constants we assume $\eps=1$ without loss of generality. The time derivative of the entropy functional in space homogeneous conditions reads as
\begingroup
\allowdisplaybreaks
\begin{align*}
		\frac{d}{dt} \mathcal{H}(f)&= \int_{\R^d} \mathcal{Q}_{1,1}^{inel} (f_1,f_1) \log f_1 \, dv + \int_{\R^d} \mathcal{Q}_{1,2}^{inel} (f_1,f_2) \log f_1 \, dv \\
		&+\hphantom{\frac{1}{2}} \int_{\R^d} \mathcal{Q}_{2,1}^{inel} (f_2,f_1) \log f_2 \, dv + \int_{\R^d} \mathcal{Q}_{2,2}^{inel} (f_2,f_2) \log f_2 \, dv\\
		&= \frac{1}{2} \int_{\R^d \times \R^d \times \mathbb{S}^{d-1}} f_{1*} f_1 \log \left(\frac{f_1' f_{1*}'}{f_1 f_{1*}} \right) B d\sigma \, dv \, dv_* \\
        &+\hphantom{\frac{1}{2}} \int_{\R^d \times \R^d \times \mathbb{S}^{d-1}} f_1 f_{2*} \log \left(\frac{f_1' f_{2*}'}{f_1 f_{2*}} \right) B d\sigma \, dv \, dv_*\\
		&+\frac{1}{2} \int_{\R^d \times \R^d \times \mathbb{S}^{d-1}} f_{2*} f_2 \log \left(\frac{f_2' f_{2*}'}{f_2 f_{2*}} \right) B d\sigma \, dv \, dv_* \\
		&=\frac{1}{2} \int_{\R^d \times \R^d \times \mathbb{S}^{d-1}} f_{1*} f_1 \left[\log \left(\frac{f_1' f_{1*}'}{f_1 f_{1*}} \right) - \frac{f_1' f_{1*}'}{f_1 f_{1*}} + 1 \right] B d\sigma \, dv \, dv_* \\
        &+\frac{1}{2} \int_{\R^d \times \R^d \times \mathbb{S}^{d-1}} (f_{1*}' f_1' - f_{1*} f_1) B d\sigma \, dv \, dv_* \\
		&+ \phantom{\frac{1}{2}}\int_{\R^d \times \R^d \times \mathbb{S}^{d-1}}f_1 f_{2*}  \left[\log \left(\frac{f_1' f_{2*}'}{f_1 f_{2*}} \right) - \frac{f_1' f_{2*}'}{f_1 f_{2*}} + 1 \right] B d\sigma \, dv \, dv_* \\
        &+\phantom{\frac{1}{2}}\int_{\R^d \times \R^d \times \mathbb{S}^{d-1}} (f_{2*}' f_1' - f_{2*} f_1) B d\sigma \, dv \, dv_* \\
		&+\frac{1}{2} \int_{\R^d \times \R^d \times \mathbb{S}^{d-1}} f_{2*} f_2 \left[\log  \left(\frac{f_2' f_{2*}'}{f_2 f_{2*}} \right) - \frac{f_2' f_{2*}'}{f_2 f_{2*}} + 1 \right] B d\sigma \, dv \, dv_* \\
        &+\frac{1}{2} \int_{\R^d \times \R^d \times \mathbb{S}^{d-1}} (f_{2*}' f_2' - f_{2*} f_2) B d\sigma \, dv \, dv_*.
\end{align*}
\endgroup
The first terms in each row of the equality are non-positive, since $\log x - x +1 \leq 0$. This is the same property used in the one-species elastic Boltzmann $H$-theorem, since it represents the elastic contribution of the collision operator.\\
Finally we have
\begin{equation}
	\label{entropy_functional}
	\begin{aligned}
		\mathcal{H}(f) \leq & \phantom{+} \frac{1}{2} \int_{\R^d \times \R^d \times \mathbb{S}^{d-1}} (f_{1*}' f_1' - f_{1*} f_1) B d\sigma \, dv \, dv_* \\
		&+\int_{\R^d \times \R^d \times \mathbb{S}^{d-1}} (f'_{2*} f'_1 - f_{2*} f_1)B d\sigma \, dv \, dv_* \\
        & +\frac{1}{2} \int_{\R^d \times \R^d \times \mathbb{S}^{d-1}} (f_{2*}' f_2' - f_{2*} f_2) B d\sigma \, dv \, dv_*.
	\end{aligned}
\end{equation}
All these terms have no sign \textit{a priori} and they are $0$ only in the elastic case, due to the involutive nature of the collisional transformation $(v,v_*) \to (v',v'_*)$.\\
Therefore, as in the case of single-species granular gases, Boltzmann's entropy $\mathcal{H}(f)$ is not dissipated by the solution if the restitution coefficient $e < 1$.\\

\subsection{Kinetic Energy Dissipation}
We can nevertheless define some macroscopic quantities in order to study the large time behavior of the solution, since we have no information about the entropy. If we assume that the collision kernel is of the general type, using polar coordinates we can rewrite the dissipation rate as
\begin{equation}
    \Delta_{ij}(|u|, \mathcal{E}) = \tau \frac{m_i\,m_j}{(m_i+m_j)} \frac{1-e^2}{2} |u|^{\lambda+2} \mathcal{E}^\gamma,
\end{equation}
where
\begin{equation}
    \tau = |\mathbb{S}^{d-2}| \int_0^\pi \cos^2(\theta)\, \sin^{d-3} (\theta) b (\cos(\theta)) d\theta < \infty.
\end{equation}
Let us assume the non-negativity of the distribution functions $f_i$ and $f_j$. Then, we have
\begin{equation*}
\begin{aligned}
    \frac{d}{dt} \mathcal{E}(f)(t) = & -\tau m_i \frac{1-e^2}{4} \mathcal{E}(f)^\gamma(t) \int_{\R^d \times \R^d} f_{i*} f_i |v-v_*|^{\lambda+2} dv\,dv_* \\ & -\tau m_j \frac{1-e^2}{4} \mathcal{E}(f)^\gamma(t) \int_{\R^d \times \R^d} f_{j*} f_j |v-v_*|^{\lambda+2} dv\,dv_* \\  & -\tau \frac{m_i\,m_j}{(m_i+m_j)} \frac{1-e^2}{2} \mathcal{E}(f)^\gamma(t) \int_{\R^d \times \R^d} f_i f_{j*} |v-v_*|^{\lambda+2} dv\,dv_*  \\
    \leq & -\tau \frac{m_i}{m_i+m_j} \frac{1-e^2}{4} \mathcal{E}(f)^\gamma (t) \int_{\R^d \times \R^d} f_i [m_i f_{i*} + m_j f_{j*}] |v-v_*|^{\lambda+2} dv\,dv_* \\
    & -\tau \frac{m_j}{(m_i+m_j)} \frac{1-e^2}{4} \mathcal{E}(f)^\gamma (t) \int_{\R^d \times \R^d} f_{j*} [m_i f_i + m_j f_j] |v-v_*|^{\lambda+2} dv\,dv_*  \\
    \leq & - \tau \frac{1}{(m_i+m_j)} \frac{1-e^2}{4} \mathcal{E}(f)^\gamma(t) \int_{\R^d} [m_i\,f_i + m_j \, f_j] |v|^{\lambda+2} dv  \\
    \leq & - \tau \frac{1}{(m_i+m_j)} \frac{1-e^2}{4} \mathcal{E}(f)^{\gamma+1+\lambda/2}(t),
\end{aligned}   
\end{equation*}
where we have used Jensen's inequality to estimate
\begin{equation*}
    \int_{\R^d} [m_i\,f_{i*} + m_j\,f_{j*}] |v-v_*|^{\lambda+2} dv_* \geq \Big|v-\int_{\R^d} [m_i\,f_{i*} v_* + m_j \, f_{j*} v_*] dv_* \Big|^{\lambda+2} = |v|^{\lambda+2}.
\end{equation*}
Therefore, setting 
\begin{equation}
    C_e = \tau \rho \, \frac{1-e^2}{4\,(m_i+m_j)} \quad \text{ and } \quad \alpha=\gamma+\frac{1}{2},
\end{equation}
we obtain the following large time behaviors:
\begin{itemize}
    \item pseudo-Maxwellian molecules ($\lambda=\gamma=0$) decays exponentially fast towards Dirac delta:
    \begin{equation}
    	\mathcal{E}(f)(t) \leq \displaystyle \mathcal{E}(f^{in}) e^{-C_e\,t}
    \end{equation}
    \item Hard spheres ($\lambda=1$,  $\gamma=0$) exhibits the Haff's cooling Law (see \cite{haff1983}):
    \begin{equation}
    	\label{Haffs_Law_Hardspheres}
        \mathcal{E}(f)(t) \leq \displaystyle \left(\mathcal{E}(f^{in})^{-\frac{1}{2}} + C_e \frac{t}{2} \right)^{-2}
    \end{equation}
\end{itemize}
We have proven the following formal result
\begin{prop}
    \label{proposition_energy_dissipation}
    Let $(f_i^0)$ be a set of nonnegative distribution functions satisfying \eqref{hypothesis_initial_moments} and let $f_i(t,v)$ be the associated solution to the Cauchy problem \eqref{homogeneous_BEs}, with a constant restitution coefficient. Then in the VHS case,
    \begin{equation}
    \label{energy_dissipation}
        \frac{d}{dt} \mathcal{E}(f)(t) \leq - \tau \frac{1}{m_i+m_j} \frac{1-e^2}{4} \left(\mathcal{E}(f)(t)\right)^{\gamma+1+\lambda/2} \qquad \forall \, t \geq 0.
    \end{equation}
\end{prop}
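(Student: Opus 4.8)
The plan is to differentiate the total kinetic energy $\mathcal{E}(f)(t)$ along the homogeneous flow \eqref{homogeneous_BEs} and to convert the result into a closed scalar differential inequality for $\mathcal{E}$ alone. First I would write $\frac{d}{dt}\mathcal{E}(f)=\sum_{i,j}\int_{\R^d}\mathcal{Q}_{i,j}^{inel}(f_i,f_j)\,m_i|v|^2\,dv$, insert the weak formulations \eqref{weak_monospecies} and \eqref{weak_bispecies_sum} with the test weight $m_i|v|^2$, and use the conservation of mass and momentum so that only the energy defect of \eqref{microscopic_dynamics} survives. This identifies $\frac{d}{dt}\mathcal{E}(f)=-\sum_{i,j}D_{ij}(f_i,f_j)$ with the dissipation functionals introduced above. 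In the generalized hard-spheres (VHS) case the angular integral in $\Delta_{ij}$ is explicit and produces the finite constant $\tau$, so that each contribution takes the form $\tau\frac{m_i m_j}{m_i+m_j}\frac{1-e^2}{2}\mathcal{E}^\gamma\int_{\R^d\times\R^d}f_i f_{j*}|v-v_*|^{\lambda+2}\,dv\,dv_*$.

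Next I would regroup the self-collision terms ($i=j$) and the cross terms ($i\neq j$) and, using the nonnegativity of the $f_i$, factor out the relative-velocity weight. The crucial estimate is the lower bound
\begin{equation*}
    \int_{\R^d}[m_i f_{i*}+m_j f_{j*}]\,|v-v_*|^{\lambda+2}\,dv_* \;\geq\; |v|^{\lambda+2},
\end{equation*}
which follows from Jensen's inequality applied to the convex map $w\mapsto|v-w|^{\lambda+2}$ (convex since $\lambda+2\geq 1$), once the normalizations \eqref{hypothesis_initial_moments} are used to place the barycenter $\int_{\R^d}[m_i f_{i*}v_*+m_j f_{j*}v_*]\,dv_*$ at the origin via the vanishing total momentum.

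Finally I would close the estimate by bounding the moment $\int_{\R^d}[m_i f_i+m_j f_j]\,|v|^{\lambda+2}\,dv$ from below by $(\mathcal{E}(f))^{1+\lambda/2}$, again by Jensen's inequality, this time for the convex function $s\mapsto s^{1+\lambda/2}$ applied to the observable $|v|^2$ against the mass density $m_i f_i+m_j f_j$. Collecting the constants into $C_e$ yields the stated inequality \eqref{energy_dissipation}, from which the two asymptotic regimes (exponential decay for $\lambda=\gamma=0$ and the Haff law \eqref{Haffs_Law_Hardspheres} for $\lambda=1$, $\gamma=0$) follow by integrating the scalar ODE. I expect the main obstacle to be the careful bookkeeping in the two Jensen steps: the first must exploit the vanishing total momentum to reduce $|v-v_*|^{\lambda+2}$ to $|v|^{\lambda+2}$, while the second passes from the $(\lambda+2)$-moment to the prescribed power of the energy. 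The nonnegativity assumption on the $f_i$ is essential for both, and tracking the mass normalization is precisely what fixes the constant in \eqref{energy_dissipation}.
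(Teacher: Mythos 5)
Your proposal is correct and follows essentially the same route as the paper: differentiating $\mathcal{E}$ via the weak form to get $-\sum_{i,j}D_{ij}$, making the angular integral explicit to produce $\tau$, regrouping the self- and cross-terms using nonnegativity, and then applying the two Jensen steps (first against the probability measure $\tfrac{1}{2}(m_if_{i*}+m_jf_{j*})\,dv_*$ with the zero-momentum normalization, then passing from the $(\lambda+2)$-moment to $\mathcal{E}^{1+\lambda/2}$). The only bookkeeping point worth tracking is the harmless factor of $2$ coming from the normalization $\int(m_if_i+m_jf_j)\,dv=2$ in both Jensen applications, which the paper also absorbs into the constant.
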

\begin{rem}
It is possible to extend this result, giving an estimation on the decay of $\mathcal{E}(f)(t)$ also in the case of non constant restitution coefficients in a weakly inelastic regime, we refer to \cite{alonso2010} for the proof in the single-species case.
\end{rem}

\section{A Povzner-type Inequality for Granular Mixtures}
\label{Section_Povzner_inequality}
The key ingredient for obtaining estimates in the space $L^1_q$ is the so-called Povzner-type inequality \cite{povzner1962}, that we will extend to the multi-species inelastic case.\\
These estimates are well known for the Boltzmann equation with elastic collisions, see \cite{bobylev1997, mischler1999, lu1999}, as well for the inelastic case \cite{bobylev2004, gamba2004}. They have been extended also to the multi-species case with elastic collisions in a more recent work \cite{briant2016boltzmann}. Our proof is an extension of \cite{gamba2004}, in which the authors prove a rougher version of the Povzner-type inequality without exploiting the symmetric properties of the collisions. Since the multi-species case is characterized by an intrinsic asymmetry due to the presence of difference masses, this will be the best approach to extend such results to our case. \\
We consider $\psi(x)$, $x>0$ to be a convex non-decreasing function, satisfying the following assumptions
\begin{gather}
	\psi(x) \geq 0, \quad x >0, \quad \psi(0)=0; \label{assumptions_psi_1}\\[6pt]
	\psi(x) \text{ is convex }, \, C^1([0, \infty)), \,\, \psi''(x) \text{ is locally bounded};\\[6pt]
	\psi'(\alpha x) \leq \eta_1(\alpha) \psi'(x), \quad x>0, \quad \alpha >1;\\[6pt]
	\psi''(\alpha x) \leq \eta_2(\alpha) \psi''(x), \quad x>0, \quad \alpha >1, \label{assumptions_psi_last}
\end{gather}
where $\eta_1(\alpha)$ and $\eta_2(\alpha)$ are bounded functions of $\alpha$ for every $\alpha >0$.\\
We are interested in estimating the expression
\begin{equation}
    q[\psi](v,v_*,\sigma) = \psi(m_i \, |v'|^2)+\psi(m_j \, |v'_*|^2)-\psi(m_i \, |v|^2)-\psi(m_j \, |v_*|^2)
\end{equation}
and the expression
\begin{equation}
\label{povzner_integral_expression}
    \bar{q}[\psi](v,v_*) = \int_{\mathbb{S}^{d-1}} q[\psi](v,v_*,\sigma) b(v-v_*, \sigma) d\sigma.
\end{equation}
The aim is to treat the case of
\begin{equation}
    \psi(x)=x^p, \qquad \psi(x)=(1+x)^p-1, \quad p>1,
\end{equation}
and possibly truncated versions of such functions. Let us recall the following Lemma which will be useful to establish our result.
\begin{lem}[\cite{gamba2004}, Lemma $3.1$]
    \label{lemma_properties_psi}
    Assume that $\psi(x)$ satisfies \eqref{assumptions_psi_1}-\eqref{assumptions_psi_last}. Then
    \begin{equation}
        \psi(x+y) - \psi(x) - \psi(y) \leq A(x \psi'(y)+y\psi'(x))
    \end{equation}
    and
    \begin{equation}
        \psi(x+y) - \psi(x) - \psi(y) \geq b\, xy \psi''(x+y),\\
    \end{equation}
    where $A=\eta_1(2)$ and $b=(2\eta_2(2))^{-1}$.
\end{lem}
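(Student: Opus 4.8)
The plan is to reduce both inequalities to a single integral identity coming from $\psi(0)=0$ and the fundamental theorem of calculus. Since the arguments $x=m_i|v|^2$ and $y=m_j|v_*|^2$ are nonnegative, I may assume $x,y>0$ (if either vanishes both sides are immediate). Writing $\psi(x)=\int_0^x\psi'(t)\,dt$ and $\psi(x+y)-\psi(y)=\int_0^x\psi'(y+t)\,dt$, and using that \eqref{assumptions_psi_1}--\eqref{assumptions_psi_last} make $\psi'$ locally Lipschitz, hence absolutely continuous with a.e.\ derivative $\psi''$, I obtain the two representations
\[
\psi(x+y)-\psi(x)-\psi(y)=\int_0^x\!\big[\psi'(y+t)-\psi'(t)\big]\,dt=\iint_{[0,x]\times[0,y]}\psi''(t+s)\,dt\,ds .
\]
Everything then follows by combining convexity ($\psi',\psi''\ge 0$ and $\psi'$ nondecreasing) with the scaling bounds.

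For the upper bound I would assume without loss of generality $x\le y$. In the first representation, monotonicity of $\psi'$ gives $\psi'(y+t)-\psi'(t)\le\psi'(y+t)\le\psi'(x+y)$ for $t\in[0,x]$, so the left-hand side is at most $x\,\psi'(x+y)$. Since $x+y\le 2y$, the scaling hypothesis $\psi'(\alpha x)\le\eta_1(\alpha)\psi'(x)$ yields $\psi'(x+y)\le\psi'(2y)\le\eta_1(2)\psi'(y)$, whence the left-hand side is $\le\eta_1(2)\,x\psi'(y)\le\eta_1(2)\big(x\psi'(y)+y\psi'(x)\big)$ because $y\psi'(x)\ge0$. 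This is the claim with $A=\eta_1(2)$.

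For the lower bound I would work with the double integral and discard the part of the rectangle where $\psi''$ is least controlled. Restrict the integration to $R:=\{(t,s)\in[0,x]\times[0,y]:t+s\ge(x+y)/2\}$; since $\psi''\ge0$ this only decreases the value. The key geometric fact is that $R$ has area exactly $xy/2$: the central symmetry $(t,s)\mapsto(x-t,y-s)$ preserves the rectangle and exchanges $R$ with its complement up to a null set, so the two have equal area. On $R$ one has $\alpha:=(x+y)/(t+s)\in[1,2]$, and \eqref{assumptions_psi_last} gives $\psi''(x+y)=\psi''(\alpha(t+s))\le\eta_2(\alpha)\psi''(t+s)\le\eta_2(2)\psi''(t+s)$, i.e.\ $\psi''(t+s)\ge\psi''(x+y)/\eta_2(2)$. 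Integrating over $R$ produces the left-hand side $\ge\frac{xy}{2}\,\psi''(x+y)/\eta_2(2)$, which is the claim with $b=(2\eta_2(2))^{-1}$.

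The one step needing care — and the main, if minor, obstacle — is the passage $\eta_2(\alpha)\le\eta_2(2)$ for $\alpha\in(1,2]$ in the lower bound. This holds once $\eta_2$ is nondecreasing, which can always be arranged by replacing $\eta_2$ with its nondecreasing envelope $\bar\eta_2(\alpha):=\sup_{1<\beta\le\alpha}\eta_2(\beta)$; this still satisfies \eqref{assumptions_psi_last} and only enlarges the constant, while for the explicit targets $\psi(x)=x^p$ and $\psi(x)=(1+x)^p-1$ the natural envelope is already essentially sharp. Apart from this, the proof is entirely elementary: the whole content is the two integral identities, monotonicity of $\psi'$ and nonnegativity of $\psi''$, the area computation $|R|=xy/2$, and a single use of each scaling hypothesis.
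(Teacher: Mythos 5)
The paper does not prove this lemma: it is quoted verbatim from \cite{gamba2004} (Lemma 3.1), so there is no internal proof to compare against. Your argument is correct and is essentially the standard one from that reference: the upper bound via $\psi(x+y)-\psi(y)=\int_0^x\psi'(y+t)\,dt\le x\,\psi'(x+y)\le\eta_1(2)\,x\,\psi'(y)$ for $x\le y$, and the lower bound by integrating $\psi''(t+s)$ over the half-rectangle $\{t+s\ge(x+y)/2\}$ of area $xy/2$, on which the dilation factor $(x+y)/(t+s)$ stays in $[1,2]$; note also that the only inequality you actually need from the double-integral representation is $\psi'(y+t)-\psi'(t)\ge\int_0^y\psi''(t+s)\,ds$, which holds for any nondecreasing $\psi'$, so the absolute-continuity caveat is harmless. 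The one genuine subtlety — that the stated constant $b=(2\eta_2(2))^{-1}$ tacitly uses $\sup_{1<\alpha\le 2}\eta_2(\alpha)=\eta_2(2)$, i.e.\ monotonicity of $\eta_2$ — is inherited from the original statement, and you identify and resolve it correctly by passing to the nondecreasing envelope of $\eta_2$ (which is harmless for the target functions $x^p$ and $(1+x)^p-1$).
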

In order to obtain some bounds, let us rewrite the post-collisional velocities $v'$ and $v'_*$ in a more convenient way, with respect to the center of mass-relative velocity variables. We set
\begin{equation}
    v'=\frac{w+m_j \lambda |u| \omega}{m_i+m_j}, \qquad v'_*=\frac{w-m_i \lambda |u| \omega}{m_i+m_j},
\end{equation}
where $w=m_i\,v+m_j\,v_*$, $u=v-\,v_*$ and $\omega$ is a parameter vector on the sphere $\mathbb{S}^{d-1}$. The parameter $\lambda$ is chosen s.t.
\begin{equation}
    \lambda \omega = \beta \sigma + (1-\beta) \nu,
\end{equation}
where $\beta=\displaystyle \frac{1+e}{2}$ and $\nu=\displaystyle \frac{u}{|u|}$. Therefore, using the relation
\begin{equation}
    \beta \sigma = \lambda \omega - (1-\beta) \nu,
\end{equation}
and taking the dot product of each side with itself, we obtain
\begin{equation}
    \lambda^2 - 2 \lambda (1-\beta) \cos \chi + (1-\beta)^2 -\beta^2 = 0,
\end{equation}
where $\chi$ is the angle between $u$ and $\omega$. It leads to
\begin{equation}
    \lambda= (1-\beta) \cos \chi + \sqrt{(1-\beta)^2(\cos^2\chi -1) + \beta^2}.
\end{equation}
Notice that
\begin{equation}
    0 < e \leq \lambda(\cos\chi) \leq 1,
\end{equation}
for all $\chi$. Then with this parametrization we have
\begin{equation}
    \begin{aligned}
        &|v'|^2 = \frac{|w|^2 + m_j^2 \lambda^2 |u|^2 + 2 \lambda m_j |u| |w| \cos \mu}{(m_i+m_j)^2},\\
        &|v'_*|^2 = \frac{|w|^2 + m_i^2 \lambda^2 |u|^2 - 2 \lambda m_i |u| |w| \cos \mu}{(m_i+m_j)^2},
    \end{aligned}
\end{equation}
where $\mu$ is the angle between $w$ and $\omega$.
\begin{lem}
\label{lemma_povnzer_2}
    Assume that the function $\psi$ satisfies \eqref{assumptions_psi_1}-\eqref{assumptions_psi_last}. Then we have
    \begin{equation}
        q[\psi]=-n[\psi]+p[\psi],
    \end{equation}
    where 
    \begin{equation}
        p[\psi] \leq A(m_i |v|^2 \psi'(m_j\,|v_*|^2) + m_j |v_*|^2 \psi'(m_i\,|v|^2))
    \end{equation}
    and
    \begin{equation}
        n[\psi] \geq \kappa(\lambda, \mu) (m_i |v|^2 + m_j |v_*|^2)^2 \psi''(m_i |v|^2 + m_j |v_*|^2).
    \end{equation}
    Here $A=\eta_1(2)$, $b=2\left(\eta_2(2)\right)^{-1}$ and 
    \begin{equation}
        \kappa(\lambda, \mu) = b \frac{m_i\,m_j}{(m_i+m_j)^2} \lambda^4 (\eta_2(\lambda^{-2}))^{-1} \sin^2 \mu.
    \end{equation}.
\end{lem}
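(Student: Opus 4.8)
The plan is to split $q[\psi]$ around the total mass-weighted energy $m_i|v|^2 + m_j|v_*|^2$, exploiting that convex functions vanishing at the origin are superadditive, together with the two one-sided estimates of Lemma \ref{lemma_properties_psi}. Writing for brevity $x = m_i|v|^2$, $y = m_j|v_*|^2$, $X = m_i|v'|^2$, $Y = m_j|v'_*|^2$, I would simply set
\begin{equation*}
    p[\psi] = \psi(x+y) - \psi(x) - \psi(y), \qquad n[\psi] = \psi(x+y) - \psi(X) - \psi(Y),
\end{equation*}
so that the identity $q[\psi] = p[\psi] - n[\psi]$ holds by inspection, the shared term $\psi(x+y)$ cancelling. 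The upper bound on $p[\psi]$ is then immediate, since it is precisely the left-hand side of the first inequality of Lemma \ref{lemma_properties_psi} applied to the pair $(x,y)$, yielding $p[\psi] \le A\big(x\,\psi'(y) + y\,\psi'(x)\big)$, which is exactly the claimed estimate.

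For the lower bound on $n[\psi]$ the starting observation is that the collision dissipates energy. Using the center-of-mass parametrization with $w = m_iv + m_jv_*$ and $u = v - v_*$, a direct computation gives both mass-weighted energies in closed form,
\begin{equation*}
    x + y = \frac{|w|^2 + m_im_j|u|^2}{m_i+m_j}, \qquad X + Y = \frac{|w|^2 + m_im_j\lambda^2|u|^2}{m_i+m_j},
\end{equation*}
the cross terms in $\cos\mu$ cancelling in each sum. Since $0 < e \le \lambda \le 1$, this yields simultaneously $X+Y \le x+y$ and $X+Y \ge \lambda^2(x+y)$. As $\psi$ is non-decreasing I may drop the non-negative difference $\psi(x+y) - \psi(X+Y)$ and bound $n[\psi] \ge \psi(X+Y) - \psi(X) - \psi(Y)$; the second inequality of Lemma \ref{lemma_properties_psi} applied to $(X,Y)$ then gives $n[\psi] \ge b\,XY\,\psi''(X+Y)$.

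It remains to turn this into the stated form, and here I expect the only genuine computation, since this is where the mass asymmetry enters. The crucial algebraic claim is
\begin{equation*}
    XY \ge \frac{m_im_j}{(m_i+m_j)^2}\,\sin^2\mu\,(X+Y)^2.
\end{equation*}
Substituting the explicit expressions for $|v'|^2$ and $|v'_*|^2$ and writing each factor $m_i|v'|^2$ and $m_j|v'_*|^2$ as a sum of two squares (a ``$\cos\mu$'' square plus a ``$\sin\mu$'' square), I would expand the difference between the two sides; after collecting terms it reorganizes into a perfect square of the form $\big[(m_i-m_j)|w|\lambda|u| - \cos\mu\,(|w|^2 - m_im_j\lambda^2|u|^2)\big]^2 \ge 0$, which proves the claim.

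Feeding this into $n[\psi] \ge b\,XY\,\psi''(X+Y)$, then using $(X+Y)^2 \ge \lambda^4(x+y)^2$ from the previous step, and finally comparing the second derivatives through the scaling hypothesis \eqref{assumptions_psi_last} — since $x+y \le \lambda^{-2}(X+Y)$ gives $\psi''(X+Y) \ge (\eta_2(\lambda^{-2}))^{-1}\psi''(x+y)$ — assembles exactly
\begin{equation*}
    \kappa(\lambda,\mu) = b\,\frac{m_im_j}{(m_i+m_j)^2}\,\lambda^4\,(\eta_2(\lambda^{-2}))^{-1}\sin^2\mu.
\end{equation*}
The main obstacle is therefore confined to identifying the perfect square in the bound for $XY$; everything else is a chain of monotonicity and scaling estimates already encoded in Lemma \ref{lemma_properties_psi} and in the assumptions \eqref{assumptions_psi_1}--\eqref{assumptions_psi_last} on $\psi$.
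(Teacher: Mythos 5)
Your proposal is correct and follows essentially the same route as the paper: the identical splitting into $p[\psi]$ and $n[\psi]$, energy dissipation plus monotonicity of $\psi$ to pass to the post-collisional variables, Lemma \ref{lemma_properties_psi} applied to $(X,Y)$, the same perfect square $\bigl(\lambda|u||w|(m_i-m_j)-\cos\mu\,(m_im_j\lambda^2|u|^2-|w|^2)\bigr)^2$ in the bound $XY\ge \frac{m_im_j}{(m_i+m_j)^2}\sin^2\mu\,(X+Y)^2$, and the same $\lambda^2$ comparison with the $\eta_2$ scaling to return to the unprimed variables. The only cosmetic difference is that the algebraic remainder is a sum of two non-negative squares (the one you name plus a $\cos^2\mu$ term) rather than a single perfect square, which does not affect the argument.
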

\begin{proof}
We start by setting
\begin{equation}
    p[\psi] = \psi(m_i |v|^2 + m_j |v_*|^2)-\psi(m_i |v|^2) - \psi(m_j |v_*|^2)
\end{equation}
and
\begin{equation}
    n[\psi] = \psi(m_i |v|^2 + m_j |v_*|^2)-\psi(m_i |v'|^2) - \psi(m_j |v'_*|^2)
\end{equation}
From Lemma \ref{lemma_properties_psi}, the estimates for $p$ easily follows. It remains to verify the lower bound for $n$, where we use that $m_i |v|^2+ m_j |v_*|^2 \geq m_i |v'|^2+ m_j|v'_*|^2$. We then obtain
\begin{equation}
    \begin{aligned}
        n[\psi] &\geq \psi(m_i |v'|^2 + m_j |v'_*|^2)-\psi(m_i |v'|^2) - \psi(m_j |v'_*|^2) \\
        &\geq b \, m_i m_j |v'|^2 |v'_*|^2 \psi''(m_i |v'|^2 + m_j |v'_*|^2) \\
        &= b \, \xi(v',v'_*) (m_i |v'|^2 + m_j |v'_*|^2)^2 \psi''(m_i |v'|^2 + m_j |v'_*|^2),
    \end{aligned}
\end{equation}
where 
\begin{equation}
    \xi(v',v'_*)=\frac{m_i |v'|^2}{m_i|v'|^2+m_j|v'_*|^2} \frac{m_j |v'_*|^2}{m_i|v'|^2+m_j|v'_*|^2}.
\end{equation}
Now we compute
\begin{equation*}
    \begin{aligned}
        (m_i|v'|^2+m_j|v'_*|^2)^2\xi(v',v'_*) &= m_i\,m_j |v'|^2 |v'_*|^2 \\
        &=\frac{m_i\,m_j}{(m_i+m_j)^4} \Big( |w|^4 + m_i^2 \lambda^2 |u|^2 |w|^2 - 2 \lambda m_i |u| |w|^3 \cos \mu \\
        &+ m_j^2 \lambda^2 |u|^2 |w|^2 + m_i^2 m_j^2 \lambda^4 |u|^4 - 2 m_i m_j^2 \lambda^3 |u|^3 |w| \cos \mu \\
        &+2 \lambda m_j |u| |w|^3 \cos \mu + 2 m_i^2 m_j \lambda^3 |u|^3 |w| \cos \mu - 4 m_i m_j \lambda^2 |u|^2 |w|^2 \cos^2 \mu \Big)\\
        &=\frac{m_i\,m_j}{(m_i+m_j)^4} \Big( \left(m_i |v'|^2 + m_j |v'_*|^2\right)^2 (m_i+m_j)^2 \\
        &+\lambda^2|u|^2|w|^2(m_i-m_j)^2 + 2\lambda |u||w| \cos \mu (m_i-m_j) (m_i\,m_j \lambda^2 |u|^2 - |w|^2) \\
        &-4m_i\,m_j \lambda^2 |u|^2 |w|^2 \cos^2 \mu \Big) \\
        &=\frac{m_i\,m_j}{(m_i+m_j)^4} \Big( (m_i |v'|^2 + m_j |v'_*|^2)^2 (m_i+m_j)^2 \\
        &+(\lambda|u||w|(m_i-m_j)-\cos \mu (m_i\,m_j \lambda^2 |u|^2 - |w|^2))^2 \\
        &+ \cos^2 \mu (m_i\,m_j \lambda^2 |u|^2 - |w|^2)^2  - 4m_i\,m_j \lambda^2 |u|^2 |w|^2 \cos^2 \mu \Big).
    \end{aligned}
\end{equation*}
Since the second and the third terms of the sum inside the parentheses are positive, we obtain
\begin{equation}
    \begin{aligned}
    \xi(v',v'_*) &\geq \frac{m_i\,m_j}{(m_i+m_j)^2} \Big(1- \frac{4\,m_i\,m_j \lambda^2 |u|^2|w^2}{(m_i\,m_j\lambda^2|u|^2+|w|^2)^2}\cos^2\mu \Big) \\
    &\geq\frac{m_i\,m_j}{(m_i+m_j)^2}(1-\cos^2\mu)=\frac{m_i\,m_j}{(m_i+m_j)^2} \sin^2 \mu.
    \end{aligned}
\end{equation}
Finally, we notice that
\begin{equation*}   
	\begin{aligned} m_i|v'|^2+m_j|v'_*|^2=&\frac{m_i\,m_j\lambda^2|u|^2+|w|^2}{(m_i+m_j)} \\ &\geq\frac{1}{(m_i+m_j)} \lambda^2 \left(m_i\,m_j|u|^2+|w|^2\right) = \lambda^2 \left(m_i|v|^2+m_j|v_*|^2\right),
	\end{aligned}
\end{equation*}
since
\begin{align*}
    \frac{1}{(m_i+m_j)} &(m_i\,m_j|u|^2+|w|^2) \\ 
    =&\frac{1}{(m_i+m_j)} \left[m_i\,m_j (|v|^2 - \innerproduct{v}{v_*} + |v_*|^2) + m_i^2 |v|^2 + m_j^2 |v_*|^2 + m_i\,m_j \innerproduct{v}{v_*} \right] \\
    =&\frac{1}{(m_i+m_j)} \left[m_i(m_j+m_i)|v|^2 + m_j(m_i+m_j)|v_*|^2\right].
\end{align*}
Thus, we conclude
\begin{equation*}
    n[\psi] \geq b \frac{m_i\,m_j}{(m_i+m_j)^2} 
    \lambda^4 \left(\eta_2(\lambda^{-2})\right)^{-1} \sin^2 \mu \left(m_i|v|^2+m_j|v_*|^2 \right)^2 \psi''\left(m_i|v|^2+m_j|v_*|^2 \right).
\end{equation*}
This completes the proof of the Lemma.
\end{proof}
This gives the formulation of the \textit{Povzner-type inequality} for the class of convex test functions $\psi$ satisfying \eqref{assumptions_psi_1}-\eqref{assumptions_psi_last} in the case of granular mixtures. 
\begin{lem}
\label{lem_cos}
    Assume that the function $\psi$ satisfies \eqref{assumptions_psi_1}-\eqref{assumptions_psi_last}. Then we have 
    \begin{multline*}
        \bar{q}[\psi] \leq - k \left(m_i |v|^2 + m_j |v_*|^2 \right)^2 \psi''\left(m_i |v|^2 + m_j |v_*|^2 \right) \\+ A \left(m_i |v|^2 \psi' (m_j |v_*|^2) + m_j |v_*|^2 \psi' (m_i |v|^2) \right),
    \end{multline*}
    where the constant $A$ is in Lemma \ref{lemma_povnzer_2} and $k>0$ is a constant depending on $\psi$.
\end{lem}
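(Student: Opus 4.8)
The plan is to insert the pointwise decomposition $q[\psi]=-n[\psi]+p[\psi]$ of Lemma~\ref{lemma_povnzer_2} into the definition \eqref{povzner_integral_expression} and to estimate the two resulting spherical integrals separately. By linearity,
\begin{equation*}
\bar{q}[\psi](v,v_*)= -\int_{\mathbb{S}^{d-1}} n[\psi]\, b(v-v_*,\sigma)\, d\sigma + \int_{\mathbb{S}^{d-1}} p[\psi]\, b(v-v_*,\sigma)\, d\sigma.
\end{equation*}
The gain-type contribution is immediate: the upper bound for $p[\psi]$ provided by Lemma~\ref{lemma_povnzer_2} is independent of $\sigma$, so it factors out of the integral, and since $\beta_1\le b\le\beta_2$ the remaining spherical mass $\int_{\mathbb{S}^{d-1}} b\, d\sigma$ is finite; absorbing it into the constant $A$ (or normalising the angular kernel) reproduces precisely the term $A(m_i|v|^2\psi'(m_j|v_*|^2)+m_j|v_*|^2\psi'(m_i|v|^2))$ appearing in the statement.

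The substance of the proof is the loss term. Applying the lower bound for $n[\psi]$ together with $b\ge\beta_1>0$, I would pull out the $\sigma$-independent quantity $(m_i|v|^2+m_j|v_*|^2)^2\psi''(m_i|v|^2+m_j|v_*|^2)$ and reduce the claim to exhibiting a \emph{uniform} constant
\begin{equation*}
k:=\inf_{v,v_*}\int_{\mathbb{S}^{d-1}}\kappa(\lambda,\mu)\, b(v-v_*,\sigma)\, d\sigma>0.
\end{equation*}
Recalling $\kappa(\lambda,\mu)=b\,\frac{m_i m_j}{(m_i+m_j)^2}\lambda^4(\eta_2(\lambda^{-2}))^{-1}\sin^2\mu$ (with $b$ the constant of Lemma~\ref{lemma_povnzer_2}), I would first exploit the two-sided bound $e\le\lambda(\cos\chi)\le 1$ already recorded above: it gives $\lambda^4\ge e^4$, and since then $\lambda^{-2}\in[1,e^{-2}]$ while $\eta_2$ is bounded on bounded sets, $(\eta_2(\lambda^{-2}))^{-1}\ge(\sup_{[1,e^{-2}]}\eta_2)^{-1}>0$. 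This removes all dependence on the angle $\chi$ between $u$ and $\omega$, leaving only the spherical average of $\sin^2\mu$ to estimate.

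The main obstacle is therefore to bound $\int_{\mathbb{S}^{d-1}}\sin^2\mu\, d\sigma$ from below uniformly in $(v,v_*)$, where $\mu$ is the angle between the total momentum vector $w=m_iv+m_jv_*$ and $\omega$, and $\omega$ is tied to $\sigma$ through $\lambda\omega=\beta\sigma+(1-\beta)\nu$. By the rotational invariance of the sphere, $\int_{\mathbb{S}^{d-1}}\sin^2(\text{angle between }w\text{ and }\omega)\, d\omega$ equals a strictly positive dimensional constant $c_d$ independent of the direction of $w$; the degenerate configuration $w=0$ is harmless, since the computation of $\xi(v',v'_*)$ in Lemma~\ref{lemma_povnzer_2} then returns the value $\frac{m_im_j}{(m_i+m_j)^2}$ outright, i.e. the bound with $\sin^2\mu$ replaced by $1$. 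The one delicate point is the passage between the $\sigma$- and $\omega$-integrations: because $\beta=(1+e)/2$ and $\lambda\in[e,1]$ stay bounded away from the degenerate values, the map $\sigma\mapsto\omega$ is a diffeomorphism of $\mathbb{S}^{d-1}$ whose Jacobian is bounded below by a positive constant depending only on $e$ and $d$, so the spherical average of $\sin^2\mu$ remains uniformly positive. Combining these estimates with $b\ge\beta_1$ then yields
\begin{equation*}
k\;\ge\; b\,\frac{m_i m_j}{(m_i+m_j)^2}\,e^4\,\bigl(\sup_{[1,e^{-2}]}\eta_2\bigr)^{-1}\beta_1\, c_d\;>\;0,
\end{equation*}
a constant depending on $\psi$ only through $\eta_2$, which completes the argument.
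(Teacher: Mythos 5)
Your argument is correct and is exactly the route the paper intends: the paper omits the proof, saying only that it is a straightforward extension of \cite[Lemma 3.3]{gamba2004} via Lemma \ref{lemma_povnzer_2}, and your write-up supplies precisely those details --- split $\bar q$ according to $q=-n[\psi]+p[\psi]$, pull the $\sigma$-independent bound on $p[\psi]$ out of the angular integral, and lower-bound the angular average of $\kappa(\lambda,\mu)$ using $\lambda\in[e,1]$, $b\ge\beta_1$, the harmless $w=0$ case, and the rotational invariance of $\int_{\mathbb{S}^{d-1}}\sin^2\mu\,d\omega$. The only cosmetic points are that the constant multiplying the $\psi'$ terms is really $A\int_{\mathbb{S}^{d-1}}b\,d\sigma$ unless the angular kernel is normalised, and that to transfer the $\omega$-average to the $\sigma$-average you need a lower bound on the Jacobian of $\omega\mapsto\sigma$ (equivalently an \emph{upper} bound on that of $\sigma\mapsto\omega$, not the lower bound you cite), which holds for the same reason you give since for $e>0$ the change of variables is a uniformly non-degenerate diffeomorphism of the sphere.
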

The proof of this Lemma is a straightforward extension of \cite[Lemma 3.3]{gamba2004} to the multi-species case, using the estimates obtained in Lemma \ref{lemma_povnzer_2}.\\
\begin{lem}
    Let be $p>1$, taking $\psi(x)=x^p$ yields
    \begin{equation*}
        |u|\bar{q}[\psi](v,v_*) \leq -k_p(m_i |v|^{2p+1} + m_j |v_*|^{2p+1}) + A_p (m_i\,m_j^{p}|v| |v_*|^{2p} + m_j\,m_i^p |v|^{2p} |v_*|).
    \end{equation*}
    Moreover, taking $\psi(x)=(1+x)^p-1$ yields
    \begin{equation*}
        |u|\bar{q}[\psi](v,v_*) \leq -k_p(m_i \langle v \rangle^{2p+1} + m_j \langle v_* \rangle^{2p+1}) + A_p (m_i\,m_j^{p} \langle v \rangle \langle v_* \rangle^{2p} + m_j\,m_i^p \langle v \rangle^{2p} \langle v_* \rangle).
    \end{equation*}
\end{lem}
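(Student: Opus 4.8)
The plan is to specialise the general Povzner estimate of Lemma~\ref{lem_cos} to the two test functions and then to convert the resulting ``differential'' bound (involving $\psi'$ and $\psi''$) into the stated algebraic bound by elementary homogeneous inequalities. First I would record that both functions satisfy \eqref{assumptions_psi_1}--\eqref{assumptions_psi_last}: for $\psi(x)=x^p$ one has $\psi'(x)=p\,x^{p-1}$, $\psi''(x)=p(p-1)x^{p-2}$, and the multiplicative assumptions hold with $\eta_1(\alpha)=\alpha^{p-1}$, $\eta_2(\alpha)=\alpha^{p-2}$, so that $A=2^{p-1}$ and the constant $k>0$ of Lemma~\ref{lem_cos} is explicit. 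Substituting these derivatives turns Lemma~\ref{lem_cos} into
\begin{equation*}
\bar{q}[\psi] \le -k\,p(p-1)\,(m_i|v|^2+m_j|v_*|^2)^p + A\,p\,\bigl(m_i m_j^{p-1}|v|^2|v_*|^{2p-2}+m_j m_i^{p-1}|v_*|^2|v|^{2p-2}\bigr).
\end{equation*}
Multiplying by $|u|=|v-v_*|$ leaves a quantity that is homogeneous of degree $2p+1$ in $(v,v_*)$ (recall that $b$ depends only on the angle), so it suffices to prove the target inequality pointwise; by homogeneity one may even normalise on the compact set $\{m_i|v|^2+m_j|v_*|^2=1\}$.

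For the coercive (negative) term I would use superadditivity $(a+b)^p\ge a^p+b^p$, valid for $p\ge1$, with $a=m_i|v|^2$ and $b=m_j|v_*|^2$, together with the fact that the finitely many masses are bounded away from $0$ and $\infty$, to obtain $(m_i|v|^2+m_j|v_*|^2)^p\ge c_p\,(m_i|v|^{2p}+m_j|v_*|^{2p})$ for some $c_p>0$. For the cross (positive) term I would insert $|u|\le|v|+|v_*|$ and then split each resulting interior monomial $|v|^a|v_*|^b$ (with $a+b=2p+1$, $a,b\ge1$) by a weighted Young inequality between the two extreme monomials: for instance $|v|^3|v_*|^{2p-2}\le\delta\,|v|^{2p+1}+C_\delta\,|v||v_*|^{2p}$, and symmetrically. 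This produces exactly the boundary monomials $m_i m_j^p|v||v_*|^{2p}$ and $m_j m_i^p|v|^{2p}|v_*|$ appearing on the right, plus a small multiple of the coercive monomials $m_i|v|^{2p+1}+m_j|v_*|^{2p+1}$.

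The main obstacle is that the coercive term carries the factor $|u|$, which degenerates as $v\to v_*$, so the small $\delta\,|v|^{2p+1}$ pieces generated above cannot be absorbed uniformly; note that an interior monomial such as $|v|^3|v_*|^{2p-2}$ is genuinely not dominated by the boundary terms alone (test $|v_*|\to0$ with $1<p<3/2$). To handle this I would split the domain according to whether $|u|\ge\eta\max(|v|,|v_*|)$ or not. In the first region $|u|$ is comparable to $\max(|v|,|v_*|)$, whence $|u|\,(m_i|v|^2+m_j|v_*|^2)^p\ge \eta\,c_p\,(m_i|v|^{2p+1}+m_j|v_*|^{2p+1})$ and the coercive term dominates the leftover $\delta$-terms; shrinking $k_p$ closes the estimate. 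In the complementary region $v$ and $v_*$ are close in both direction and magnitude, so $|v|\simeq|v_*|$ and the three homogeneous quantities $m_i|v|^{2p+1}+m_j|v_*|^{2p+1}$, $|v||v_*|^{2p}$, $|v|^{2p}|v_*|$ are mutually comparable; there one simply drops the negative term and controls the whole left-hand side by the bad terms, taking $A_p$ large. One checks the borderline case $|v_*|=0$ (resp. $|v|=0$) directly: the positive part vanishes because $\psi'(0)=0$, and $|u|\,\bar{q}[\psi]=-k\,p(p-1)\,m_i^p|v|^{2p+1}$, which fixes the admissible range of $k_p$.

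Finally, for $\psi(x)=(1+x)^p-1$ I would run the identical scheme with $\langle v\rangle$ in place of $|v|$, using $\psi'(x)=p(1+x)^{p-1}$ and $\psi''(x)=p(p-1)(1+x)^{p-2}$, the equivalence $1+m_i|v|^2+m_j|v_*|^2\simeq\langle v\rangle^2+\langle v_*\rangle^2$, superadditivity applied to $(1+m_i|v|^2)+(m_j|v_*|^2)$, and the crude bound $|u|\le|v|+|v_*|\le 2\,\langle v\rangle\langle v_*\rangle$ to redistribute the cross terms. The loss of exact homogeneity caused by the additive constant affects only lower-order contributions, which are absorbed by the same monomials, so the argument carries over with possibly adjusted constants $k_p,A_p$.
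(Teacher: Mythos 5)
Your argument is correct, and it follows the paper's skeleton (specialise Lemma \ref{lem_cos} to $\psi(x)=x^p$, multiply by $|u|$, use $\bigl||v|-|v_*|\bigr|\le |u|\le |v|+|v_*|$, then compare degree-$(2p+1)$ monomials), but it diverges at the one step that actually matters, and there your route is the better one. The paper disposes of the cross terms by asserting the direct domination $|u|\bigl(m_i|v|^2m_j^{p-1}|v_*|^{2p-2}+m_j|v_*|^2m_i^{p-1}|v|^{2p-2}\bigr)\le C_p\bigl(|v|\,|v_*|^{2p}+|v|^{2p}|v_*|\bigr)$, i.e.\ that each interior monomial such as $|v|^3|v_*|^{2p-2}$ is controlled by the two boundary monomials alone; this amounts to writing $|v|^3|v_*|^{2p-2}$ as a geometric interpolation $(|v|\,|v_*|^{2p})^{\theta}(|v|^{2p}|v_*|)^{1-\theta}$ with $\theta=\tfrac{2p-3}{2p-1}$, which requires $p\ge 3/2$, and indeed the inequality fails as $|v_*|\to 0$ when $1<p<3/2$, exactly as you point out. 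Your weighted Young splitting $|v|^3|v_*|^{2p-2}\le \delta |v|^{2p+1}+C_\delta |v|\,|v_*|^{2p}$ (exponents $p$ and $p/(p-1)$) is valid for every $p>1$ and, after absorbing the $\delta$-pieces into the coercive term, yields the stated bound on the full range $p>1$; this is a genuine repair of the paper's argument, not just a variant. One simplification is available, though: the case distinction $|u|\gtrless \eta\max(|v|,|v_*|)$ is unnecessary. The paper's own treatment of the negative term, $|u|\,|v|^{2p}\ge |v|^{2p+1}-|v|^{2p}|v_*|$ and symmetrically, already converts $-K|u|(m_i|v|^2+m_j|v_*|^2)^p$ into $-K(m_i^p|v|^{2p+1}+m_j^p|v_*|^{2p+1})$ plus boundary monomials, with no degeneracy at $v=v_*$ (the degeneracy is traded for the harmless corrections $|v|^{2p}|v_*|$ and $|v|\,|v_*|^{2p}$), so the $\delta$-pieces can be absorbed uniformly without splitting the domain. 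Apart from that, your verification of \eqref{assumptions_psi_1}--\eqref{assumptions_psi_last}, the superadditivity step for the coercive part, the check of the borderline case $|v_*|=0$, and the treatment of $\psi(x)=(1+x)^p-1$ via $\langle v\rangle$ all match the paper up to the choice of constants $k_p$, $A_p$ (which in both write-ups silently absorb powers of the masses).
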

\begin{proof}
    We use Lemma \ref{lem_cos} and the inequalities
    \begin{equation*}
        \Big||v|-|v_*|\Big| \leq |u| \leq |v|+|v_*|.  
    \end{equation*}
    In the case $\psi(x)=x^p$ the bounds have the form
    \begin{multline*}
        -p(p-1)k_p |u| \left(m_i |v|^2 + m_j |v_*|^2 \right)^p + \\ p A_p |u| \left(m_i |v|^2 m_j^{p-1} |v_*|^{2p-2} + m_j |v_*|^2 m_i^{p-1} |v|^{2p-2} \right),
    \end{multline*}
    where the constant $A_p$ depends on the function $\psi$ chosen.
    The terms appearing with the negative sign are estimated using the inequality
    \begin{equation*}
    	\begin{aligned}
        |u| \left(m_i |v|^2 + m_j |v_*|^2 \right)^p &\geq m_i^p |u| |v|^{2p} + m_j^p |u| |v_*|^{2p} \\
         &\geq m_i^p (|v|^{2p+1} - |v|^{2p}|v_*|) + m_j^p (|v_*|^{2p+1} - |v||v_*|^{2p}).
        \end{aligned}
    \end{equation*}
    For the other terms we have
    \begin{equation*}
        |u| \left(m_i |v|^2 m_j^{p-1} |v_*|^{2p-2} + m_j |v_*|^2 m_i^{p-1} |v|^{2p-2} \right) \leq C_p (|v||v_*|^{2p} + |v|^{2p}|v_*|).
    \end{equation*}
    Indeed, it holds 
    \begin{equation*}
        \begin{aligned}
            &|u| \left(m_i |v|^2 m_j^{p-1} |v_*|^{2p-2} + m_j |v_*|^2 m_i^{p-1} |v|^{2p-2} \right) \\
            & \leq \Big(m_i |v|^3 m_j^{p-1} |v_*|^{2p-2} + m_i |v|^2 m_j^{p-1} |v_*|^{2p-1}  \\ 
            &+m_j |v_*|^2 m_i^{p-1} |v|^{2p-1}+m_j |v_*|^3 m_i^{p-1} |v|^{2p-2} \Big) \\
            & \leq m_i m_j \left(m_j^{\frac{p^2-2p}{p-1}}|v||v_*|^{2p} + m_i^{\frac{p^2-2p}{p-1}} |v_*| |v|^{2p} + m_j^{\frac{2p^2-4p}{2p-1}} |v||v_*|^{2p} + m_i^{\frac{2p^2-4p}{2p-1}} |v_*| |v|^{2p}\right).
        \end{aligned}  
    \end{equation*} 
    The case $\psi(x)=(1+x)^p-1$ can be faced with the same strategy, by using the inequalities
    \begin{align*}
        (m_i|v|^2+m_j|v_*|^2)^2 \geq \frac{1}{2} (1+m_i|v|^2+m_j|v_*|^2)^2-1, \\ m_i|v| \geq (1+m_i|v|^2)^{1/2}-1.
    \end{align*}
\end{proof}
The Povzner-type inequalities of the previous Lemma could help studying the propagation and appearance of moments in \eqref{homogeneous_BEs}. We introduce the notation
\begin{equation}
    Y_i^s(t) = \int_{\R^d} f_i \langle v \rangle_i ^s \, dv,
\end{equation}
and the total quantity
\begin{equation}
    Y^s(t)=\sum_{i=1}^M Y_i^s(t) = \sum_{i=1}^M \int_{\R^d} f_i \langle v \rangle_i ^s \, dv.
\end{equation}
Let us notice that 
\begin{equation*}
    \begin{aligned}
        &\langle v' \rangle_i^s +  \langle v'_* \rangle_j^s - \langle v \rangle_i^s - \langle v_* \rangle_j^s \\
         &=(1+m_i|v'|^2)^{\frac{1}{2}} + (1+m_j|v'_*|^2)^{\frac{1}{2}} - (1+m_i|v|^2)^{\frac{1}{2}} - (1+m_j|v_*|^2)^{\frac{1}{2}} \\
        &=q[\psi](v,v_*,\sigma),
    \end{aligned}
\end{equation*}
for $\psi(x)=(1+x)^p-1$, with $p=\frac{s}{2}$.
\begin{lem}
\label{lemma_moments_estimates}
    Let $f$ be a sufficiently regular and rapidly decaying solution to the Boltzmann equation \eqref{homogeneous_BEs}. Then, the following inequality holds:
    \begin{equation*}
        \frac{d}{dt} Y^s \leq - \sum_{i=1}^M \left(2k_s m_i (Y_i^s)^{(s+1)/s} - K_1^s Y_i^s \right) - \sum_{i=1}^M \sum_{\substack{j=1\\j \neq i}}^M \left(k_s (m_i+m_j) (Y_i^s)^{(s+1)/s} - K_2^s Y_i^s \right),
    \end{equation*}
    where $k_s$,$K_1^s$ and $K_2^s$ are non-negative constants. In addition, we have
    \begin{equation*}
        \sup_{t>0} Y^s(t) \leq Y_*^s = \max \left\{Y^s(0), (K_{ii}^s/(2k_s\,m_i))^s, (K_{ij}^s/(k_s\,(m_i+m_j))^s \right\}.
    \end{equation*}
\end{lem}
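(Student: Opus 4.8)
The plan is to test the weak formulation of each collision operator against the weight $\langle v\rangle_i^s$ and to feed the resulting angular estimate into the Povzner-type moment inequality established just above, applied with $\psi(x)=(1+x)^{s/2}-1$ (that is, $p=s/2$). First I would differentiate $Y^s$ in time and use \eqref{homogeneous_BEs} to write
\[
\frac{d}{dt}Y^s=\sum_{i=1}^M\sum_{j=1}^M\int_{\R^d}\mathcal{Q}_{i,j}(f_i,f_j)\,\langle v\rangle_i^s\,dv,
\]
splitting the double sum into the self-collision part ($i=j$) and the cross part ($i\neq j$). For the self terms I would use the symmetric weak form \eqref{weak_monospecies} with test function $\langle\cdot\rangle_i^s$, and for each unordered pair $\{i,j\}$ I would combine the two cross contributions through the symmetrized identity \eqref{weak_bispecies_sum} with $\psi=\langle\cdot\rangle_i^s$ and $\phi=\langle\cdot\rangle_j^s$. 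In every case the integrand of the velocity integral is exactly $\langle v'\rangle_i^s+\langle v'_*\rangle_j^s-\langle v\rangle_i^s-\langle v_*\rangle_j^s=q[\psi]$, as observed right before the statement, so that after integrating the hard-sphere kernel ($\lambda=1$, $\gamma=0$) over $\sigma$ each contribution becomes $\int_{\R^{2d}}f_i f_{j*}\,|u|\,\bar{q}[\psi]\,dv\,dv_*$.

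Next I would insert the pointwise bound from the preceding Lemma,
\[
|u|\,\bar{q}[\psi]\le -k_s\big(m_i\langle v\rangle_i^{s+1}+m_j\langle v_*\rangle_j^{s+1}\big)+A_s\big(m_i m_j^{s/2}\langle v\rangle_i\langle v_*\rangle_j^{s}+m_j m_i^{s/2}\langle v\rangle_i^{s}\langle v_*\rangle_j\big),
\]
and integrate against $f_i f_{j*}$. The loss terms produce factors $Y_i^{s+1}$ times the number densities $n_i=\int f_i\,dv$, which are conserved and fixed by \eqref{hypothesis_initial_moments}; applying Jensen's inequality to the probability measure $f_i\,dv/n_i$ and the convex map $t\mapsto t^{(s+1)/s}$ gives $Y_i^{s+1}\ge n_i^{-1/s}(Y_i^s)^{(s+1)/s}$, which converts them into the negative terms $-(Y_i^s)^{(s+1)/s}$ of the claim. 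The factor $2$ attached to the self terms reflects that both colliding particles belong to species $i$, so both loss contributions feed $Y_i^{s+1}$, whereas in a cross collision the two loss contributions split between $Y_i^{s+1}$ and $Y_j^{s+1}$. The gain terms produce products $Y_i^{1}Y_j^{s}$ and $Y_i^{s}Y_j^{1}$; since $\langle v\rangle_i\ge 1$ one has $Y_i^{1}\le Y_i^{2}=n_i+\int_{\R^d}m_i f_i|v|^2\,dv\le n_i+\mathcal{E}(f)(t)$, and $\mathcal{E}(f)$ is non-increasing by Proposition \ref{proposition_energy_dissipation}, so $Y_i^1$ is bounded uniformly in time. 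Hence the gain terms are dominated by $K\,Y_i^s$, giving exactly the lower-order terms $K_1^sY_i^s$ and $K_2^sY_i^s$; collecting self and cross contributions yields the stated differential inequality.

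For the uniform-in-time bound I would reduce to a scalar differential inequality. Denoting by $a_i,b_i\ge 0$ the total coefficients of $(Y_i^s)^{(s+1)/s}$ and $Y_i^s$ accumulated for the index $i$, the estimate reads $\frac{d}{dt}Y^s\le-\sum_i\big(a_i(Y_i^s)^{(s+1)/s}-b_iY_i^s\big)$. Using the power-mean inequality $\sum_i(Y_i^s)^{(s+1)/s}\ge M^{-1/s}\big(\sum_iY_i^s\big)^{(s+1)/s}=M^{-1/s}(Y^s)^{(s+1)/s}$ together with $\sum_iY_i^s=Y^s$, this collapses to $\frac{d}{dt}Y^s\le -a(Y^s)^{(s+1)/s}+bY^s$ for suitable $a,b>0$. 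The right-hand side is negative whenever $Y^s>(b/a)^s$, so $Y^s$ cannot cross this threshold from below; a standard barrier/comparison argument then gives $\sup_{t>0}Y^s\le\max\{Y^s(0),(b/a)^s\}$, which is the asserted bound once the constants are rewritten in terms of $k_s$, $K_{ii}^s$ and $K_{ij}^s$.

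The hard part will not be the conceptual scheme—Povzner bound, Jensen's inequality, and the scalar ODE comparison are all routine—but the bookkeeping of the mass-dependent constants: one must track the factors $m_i,m_j,n_i,n_j$ through the kernel estimate and the Jensen step to produce precisely the coefficients $2k_sm_i$ and $k_s(m_i+m_j)$, the combination $m_i+m_j$ tracing back to the lower bound $m_i|v'|^2+m_j|v'_*|^2\ge\lambda^2(m_i|v|^2+m_j|v_*|^2)$ used in Lemma \ref{lemma_povnzer_2}, and one must verify that each gain contribution is genuinely of lower order in $Y_i^s$. The only non-elementary external input is the uniform boundedness of the low moment $Y_i^1$, which rests on the energy dissipation of Proposition \ref{proposition_energy_dissipation}.
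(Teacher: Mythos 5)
Your proposal is correct and follows essentially the same route as the paper: weak formulation tested against $\langle v\rangle_i^s$, the Povzner-type bound with $\psi(x)=(1+x)^{s/2}-1$, Jensen's inequality to convert $Y_i^{s+1}$ into $(Y_i^s)^{(s+1)/s}$, and an ODE comparison for the uniform-in-time bound. Your two additions — making explicit that $Y_i^1$ is controlled by the (non-increasing) kinetic energy, and using the power-mean inequality to collapse the system to a single scalar differential inequality before the barrier argument — are refinements the paper leaves implicit rather than a different method.
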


\begin{proof}
    Using the weak formulation, with $\psi(v) = \langle v \rangle_i ^s$ we obtain
    \begin{multline}
        \frac{d}{dt} \sum_{i=1}^M \int_{\R^d} f_i \, \langle v \rangle_i^s dv \\ = \sum_{i=1}^M \int_{\R^d} \mathcal{Q}_{i,i}(f_i,f_i) \, \langle v \rangle_i^s \, dv + \sum_{i, j=1}^N\int_{\R^d} \mathcal{Q}_{i,j}(f_i,f_j) \, \langle v \rangle_i^s \, dv.
    \end{multline}
    Estimating the moments of the intra-species collision integral term 
    we obtain
    \begin{equation}
        \begin{aligned}
            \int_{\R^d} Q_{i,i}(f_i,f_i) \, \langle v \rangle_i ^s \, dv \leq - 2 k_s m_i Y_i^{s+1} + 2 A_s m_i Y_i^1 Y_i^s.
        \end{aligned}
    \end{equation}
    On the other hand, for the inter-species collision integral terms we have 
    \begin{multline}
            \int_{\R^d} Q_{i,j}(f_i,f_j) \, \langle v \rangle_i ^s \, dv + \int_{\R^d} Q_{j,i}(f_j,f_i) \, \langle v \rangle_j ^s \, dv  \\
            \leq - k_s (m_i+m_j) Y_i^{s+1} + A_s m_i m_j \left(m_i^{s-1} + m_j^{s-1} \right) Y_i^1 Y_i^s.
    \end{multline}
    We then use Jensen's inequality 
    \begin{equation}
        Y_i^{s+1} \geq (Y_i^s)^{(s+1)/s} ,
    \end{equation}
    to conclude: 
    \begin{equation*}
    	\begin{aligned}
        \frac{d}{dt} Y^s \leq - \sum_{i=1}^M &\left(2k_s m_i (Y_i^s)^{(s+1)/s} - K_{ii}^s Y_i^s \right) \\ &- \sum_{i=1}^M \sum_{\substack{j=1\\j \neq i}}^M \left(k_s (m_i+m_j) (Y_i^s)^{(s+1)/s} - K_{ij}^s Y_i^s \right),
        \end{aligned}
    \end{equation*}
    where $K_{ii}^s = 2 A_s m_i$ and $K_{ij}^s = A_s m_i\,m_j ( m_i^{s-1} + m_j^{s-1})$.
    Therefore, we have
    \begin{equation}
        \frac{d}{dt} Y^{s}(t) < 0 
    \end{equation}
    if $Y^s > \max_{i}(K_{ii}^s/(2k_s\,m_i))^s$ and $Y_s > \max_{i, j} (K_{ij}^s/(k_s\,(m_i+m_j))^s$.\\
    Finally, the upper bound holds
    \begin{equation*}
        \sup_{t>0} Y^s(t) \leq Y_*^s = \max \left\{Y^s(0), (K_{ii}^s/(2k_s\,m_i))^s, (K_{ij}^s/(k_s\,(m_i+m_j))^s \right\}.
    \end{equation*}
\end{proof}

\section{Existence and Uniqueness of Solutions}
\label{Section_MischlerMouhot}
In this section, we extend the work of Mischler, Mouhot and Ricard in \cite{mischlermouhot2006} to the case of the multi-species inelastic Boltzmann equation \eqref{homogeneous_BEs}. In particular, we will focus on the case of collision rates which do not depend on the kinetic energy of the solution. To this aim, we give estimates of the global operator in Orlicz spaces, to recover the boundedness of the evolution semi-group in any Orlicz spaces, even for not bounded bilinear collision operators. The first step consists in proving convolution-like estimates in Orlicz spaces for the gain term of a generic (\textit{intra-} or \textit{inter-}) collision operator. The second step is a generalization of the proof for the minoration of the loss term to the case of multi-species interactions, in which only the total momentum is conserved. The last step is the extension of the existence and uniqueness result in \cite{mischlermouhot2006} using the Povzner-type inequality derived in the previous Section.
\subsection{Generalization of the collision dynamics}
As observed in the Section \ref{Subsection_Micro_Dynamics}, we can obtain an explicit expression for the post-collisional velocities $\{v_*,v'_*\}$ in terms of the pre-collisional ones, starting from the conservation of the total momentum and the dissipation of the total kinetic energy. We recall the $\sigma$-representation \eqref{sigma_representation}:
\begin{equation*}
    \begin{aligned}
        v'=\frac{m_i v+ m_j v_*}{m_i+m_j} + \frac{2m_j}{m_i+m_j} \frac{(1-e)}{4} (v-v_*) + \frac{2m_j}{m_i+m_j} \frac{1+e}{4} |v-v_*| \sigma,\\
        v_*'=\frac{m_i v+ m_j v_*}{m_i+m_j} - \frac{2m_i}{m_i+m_j} \frac{(1-e)}{4} (v-v_*) - \frac{2m_i}{m_i+m_j} \frac{1+e}{4} |v-v_*| \sigma,
    \end{aligned} 
\end{equation*}
where $\sigma \in \mathbb{S}^{d-1}$.
In order to simplify the notations and to give a proof in a more general framework, we introduce the following parametrization by $z \in \mathcal{D} := \{ w \in \R^d : |w| \leq 1 \}$. The explicit expressions for the post-collisional velocities become
\begin{equation}
\label{zeta_representation}
    \begin{aligned}
        v'=\frac{m_i v+ m_j v_*}{m_i+m_j} + \frac{m_j}{m_i+m_j} z |v-v_*|,\\
        v_*'=\frac{m_i v+ m_j v_*}{m_i+m_j} - \frac{m_i}{m_i+m_j} z |v-v_*|.
    \end{aligned} 
\end{equation}
This representation is equivalent to the $\sigma$-representation when
\begin{equation}
    z = \frac{(1-e)}{2}\hat{u} + \frac{(1+e)}{2} \sigma \, \, \in \mathcal{D}.
\end{equation}
The collision rate $B$ can now be written as
\begin{equation}
\label{assumption_B_structure_z}
    B=|u|\,b(u, dz, \mathcal{E}(f)),
\end{equation}
where we assume that
\begin{equation}
\label{assumption_b_cross_section}
    b=\alpha(\mathcal{E}) \beta(\mathcal{E},u;dz),
\end{equation}
for $\beta$ a probability measure on $\mathcal{D}$ representing the normalized cross-section and $\alpha$ an intensity coefficient depending on the kinetic energy.

\subsection{Assumptions on the collision rate}
Some assumptions on the collision rate $B$ are necessary, as pointed out in \cite{mischlermouhot2006}: 
\begin{itemize}
    \item The probability measure $\beta$ satisfies the symmetry property
    \begin{equation}   \label{assumption_beta_1}
        \beta(\mathcal{E}, u ;dz) = \beta(\mathcal{E}, -u ;-dz);
    \end{equation}
    \item For any $\varphi \in C_c(\R^d)$ the functions
    \begin{equation}
    \label{assumption_beta_2}
        (v,v_*,\mathcal{E}) \to \int_D \varphi(v') \beta(\mathcal{E}, u ;dz) \quad \text{ and } \quad \mathcal{E} \to \alpha(\mathcal{E})
    \end{equation}
    are continuous on $\R^d\times \R^d\times(0,\infty)$ and $(0,\infty)$ respectively;
    \item The probability measure $\beta$ satisfies the following angular spreading property: for any $\mathcal{E} >0$, there is a function $j_\mathcal{E}(\eps) \geq 0$ such that
    \begin{equation}
    \label{assumption_beta_3}
        \forall \, \eps >0, u \in \R^d \quad \int_{|\hat{u}\cdot z | \in [-1,1] \setminus [-1+\eps, 1-\eps]} \beta(\mathcal{E}, u ;dz) \leq j_{\mathcal{E}}(\eps)
    \end{equation}
    and $j_{\mathcal{E}}(\eps) \to 0$ as $\eps \to 0$ uniformly according to $\mathcal{E}$ when it is restricted to a compact set of $(0, +\infty)$.
\end{itemize}

\subsection{Estimates in Orlicz spaces}
Following the strategy of \cite{mischlermouhot2006}, we introduce the decomposition $b=b^t+b^r$ of the cross-section $b$ as
\begin{equation}
    \begin{cases}
        b^t_\varepsilon (\mathcal{E}, u; dz) = b(\mathcal{E}, u ; dz) \mathds{1}_{\{-1+\varepsilon \leq \hat{u} \cdot z \leq 1 - \varepsilon \}}, \\
        b^r_\varepsilon (\mathcal{E}, u; dz) = b(\mathcal{E}, u ; dz) - b^t_\varepsilon (\mathcal{E}, u; dz),
    \end{cases}
\end{equation}
where $\varepsilon \in (0,1)$ and $\mathds{1}_A$ is the usual indicator function of the set $A$.\\
In order to define the Orlicz space $L^\Lambda(\R^d)$, let $\Lambda$ be a $C^2$ function strictly increasing, convex and satisfying the assumptions \eqref{Appendix_Assumption_Orlicz1}-\eqref{Appendix_Assumption_Orlicz2}-\eqref{Appendix_Assumption_Orlicz3}. 

\begin{thm}
\label{orlicz_gain_term}
    Assume that $B$ satisfies \eqref{assumption_B_structure_z}-\eqref{assumption_b_cross_section}-\eqref{assumption_beta_1}-\eqref{assumption_beta_2}-\eqref{assumption_beta_3}. For any nonzero functions $f_i, f_j \in L^1_1 \cap L^\Lambda$ and for any $\varepsilon \in (0,1)$ there exists an explicit constant $C_\mathcal{E}^+(\varepsilon)$ such that
    \begin{equation}
    \label{orlicz_gain_term_inequality}
    \begin{aligned}    
    	\int_{\R^d} Q^+(f_i, f_j) \, \Lambda' &\left( \frac{f_i}{||f_i||_{L^\Lambda}} \right) dv \leq  \alpha \left(\mathcal{E}\right) \Bigg[ C_\mathcal{E}(\varepsilon) N^{\Lambda^*} \left( \Lambda' \left( \frac{|f_i|}{||f_i||_{L^\Lambda}} \right) \right) ||f_i||_{L^1_1} ||f_j||_{L^\Lambda} \\
    	+& 2j_\mathcal{E}(\varepsilon) m_i^{-1/2} ||f_i||_{L^1_1} \int_{\R^d} f_j \Lambda' \left( \frac{f_j}{||f_j||_{L^\Lambda}} \right) |v| dv \\
    	+&2\left(\frac{m_i+m_j}{m_j}\right)^d m_i^{-1/2} ||f_i||_{L^1_1} \, \frac{||f_j||_{L^\Lambda}}{||f_i||_{L^\Lambda}} \int_{\R^d} f_i \, \Lambda' \left( \frac{f_i}{||f_i||_{L^\Lambda}} \right) |v| dv \Bigg]. 
    \end{aligned}
    \end{equation}
\end{thm}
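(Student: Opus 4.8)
The plan is to transcribe the single-species argument of \cite{mischlermouhot2006} into the mass-asymmetric setting, the only genuinely new feature being that every collisional change of variables now carries the mass ratio $m_j/(m_i+m_j)$ and that the mass-weighted brackets $\langle v\rangle_i$ replace the isotropic weight. First I would rewrite the left-hand side in weak form through the $z$-representation \eqref{zeta_representation}: setting $\psi:=\Lambda'(f_i/\|f_i\|_{L^\Lambda})$ and recalling \eqref{assumption_B_structure_z}--\eqref{assumption_b_cross_section},
$$\int_{\R^d} Q^+(f_i,f_j)\,\psi\,dv=\alpha(\mathcal{E})\int_{\R^d}\int_{\R^d}\int_{\mathcal{D}} f_i(v)\,f_j(v_*)\,\psi(v')\,|u|\,\beta(\mathcal{E},u;dz)\,dv\,dv_*.$$
Then I would use the decomposition $\beta=\beta^t_\varepsilon+\beta^r_\varepsilon$ induced by $b=b^t_\varepsilon+b^r_\varepsilon$, which splits the integral into a non-grazing part $T^t$, supported on $\hat u\cdot z\in[-1+\varepsilon,1-\varepsilon]$, and a grazing remainder $T^r$.

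For the principal contribution $T^t$ the aim is to recover a convolution structure and close with the Orlicz--H\"older duality. On the truncated set the Jacobian of the map $v_*\mapsto v'$ at fixed $(v,z)$, which by \eqref{zeta_representation} equals $\left(\frac{m_j}{m_i+m_j}\right)^d(1-z\cdot\hat u)$, is bounded below by a positive constant; this is exactly where the prefactor $\left(\frac{m_i+m_j}{m_j}\right)^d$ and the $\varepsilon$-dependent constant $C_\mathcal{E}(\varepsilon)$ enter. After this change of variables the integrand factorises into a density in $v$ against $\psi$ evaluated at the new variable, so that the Orlicz pairing $\int g\,\Lambda'(h/\|h\|_{L^\Lambda})\le N^{\Lambda^*}\!\left(\Lambda'(h/\|h\|_{L^\Lambda})\right)\|g\|_{L^\Lambda}$, combined with the elementary bound $m_i^{1/2}|v|\le\langle v\rangle_i$ used to absorb $|u|$ into $\|f_i\|_{L^1_1}$, produces the first term of \eqref{orlicz_gain_term_inequality} together with the correction term carrying $\left(\frac{m_i+m_j}{m_j}\right)^d$ and the ratio $\|f_j\|_{L^\Lambda}/\|f_i\|_{L^\Lambda}$ that appears when the velocity weight is transported back from $v'$ to $v$.

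For the grazing remainder $T^r$ the idea is to dispense with any gain of integrability and simply exploit the angular-spreading assumption \eqref{assumption_beta_3}. Bounding $|u|\le|v|+|v_*|$, converting the velocity weight into $m_i^{-1/2}\|f_i\|_{L^1_1}$ via $m_i^{1/2}|v|\le\langle v\rangle_i$, and estimating the mass of $\beta$ on the grazing band by $j_\mathcal{E}(\varepsilon)$, I would obtain the remaining contribution, which is proportional to $j_\mathcal{E}(\varepsilon)$ and therefore vanishes as $\varepsilon\to0$. Collecting $T^t$ and $T^r$ then yields the three terms of the statement.

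The main obstacle will be the analysis of the collisional change of variables: in contrast with the single-species case the Jacobian of $v_*\mapsto v'$ now depends on both masses and degenerates at the grazing poles $\hat u\cdot z=\pm1$, so one must track carefully both its lower bound on the truncated set and the image of the grazing region in order to produce the precise constants $m_i^{-1/2}$ and $\left(\frac{m_i+m_j}{m_j}\right)^d$ rather than merely qualitative control. A secondary point, essentially bookkeeping, is to check that the Orlicz Young and H\"older inequalities of \cite{mischlermouhot2006} apply verbatim once $\langle v\rangle_i$ replaces the usual weight, which holds because the two weights differ only by the fixed mass factors already displayed in the estimate.
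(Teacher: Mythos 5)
Your overall architecture matches the paper's: the same truncation $b=b^t_\varepsilon+b^r_\varepsilon$ of the grazing collisions, the same change of variables $v_*\mapsto v'$ with Jacobian $\left(\frac{m_j}{m_i+m_j}\right)^d(1-\hat u\cdot z)$ justified by Lemma \ref{geometrical_lemma_orlicz}, H\"older's inequality in Orlicz spaces for the main term, and the angular-spreading bound \eqref{assumption_beta_3} for the remainder. However, there is a genuine gap in your treatment of the truncated part: you claim that \emph{after} the change of variables $v_*\mapsto v'$ ``the integrand factorises into a density in $v$ against $\psi$ evaluated at the new variable''. It does not. Writing $\varphi(f_i)=\Lambda'\bigl(f_i/\|f_i\|_{L^\Lambda}\bigr)$, the integrand $f_i(v)f_j(v_*)\varphi(f_i(v'))$ becomes, in the new variables, $f_i(v)\,f_j\bigl(\psi_{v,z}^{-1}(v')\bigr)\,\varphi(f_i(v'))$, where $f_j$ is evaluated at a point depending jointly on $v$, $v'$ and $z$; no product structure compatible with the Orlicz--H\"older pairing is available, and the $N^{\Lambda^*}$ factor you would produce would involve $\varphi(f_i)$ composed with the collision map rather than $\varphi(f_i)$ itself, which is not what appears in \eqref{orlicz_gain_term_inequality}.

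The missing device is Young's inequality in Orlicz form, applied \emph{before} any change of variables: writing $f_{j*}\varphi(f_i')=\|f_j\|_{L^\Lambda}\tfrac{f_{j*}}{\|f_j\|_{L^\Lambda}}\varphi(f_i')\leq\|f_j\|_{L^\Lambda}\Lambda\bigl(\tfrac{f_{j*}}{\|f_j\|_{L^\Lambda}}\bigr)+\|f_j\|_{L^\Lambda}\Lambda^*\bigl(\varphi(f_i')\bigr)$ decouples the pre-collisional density $f_{j*}$ from the post-collisional nonlinearity. The first piece no longer contains $v'$ in the nonlinearity, so it is estimated directly with $\Lambda(x)\leq x\Lambda'(x)$ and H\"older, giving the first term of \eqref{orlicz_gain_term_inequality}; the second piece, via $\Lambda^*(\Lambda'(s))\leq s\Lambda'(s)$, becomes $\tfrac{\|f_j\|_{L^\Lambda}}{\|f_i\|_{L^\Lambda}}f_i'\varphi(f_i')$, in which everything nonlinear is evaluated at $v'$, and only then is the change of variables $v_*\mapsto v'$ with the Jacobian bound $|J_\Psi^{-1}|\leq\left(\tfrac{m_i+m_j}{m_j}\right)^d\varepsilon^{-1}$ applicable. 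This is also the only mechanism producing the ratio $\|f_j\|_{L^\Lambda}/\|f_i\|_{L^\Lambda}$ and the weighted integral of $f_i\,\varphi(f_i)\,|v|$ in the third term of the statement. The same decoupling (and a further change of variables) is needed in the grazing remainder, which your sketch also omits. The rest of your plan, including the mass bookkeeping $m_i^{1/2}|v|\leq\langle v\rangle_i$, is consistent with the paper.
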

We first state the following geometrical Lemma proven in \cite{mischlermouhot2006}, that we will use to justify the change of variables $v_* \to v'$ and $v \to v'$, keeping fixed the other variables.
\begin{lem}[\cite{mischlermouhot2006}, Lemma $2.3$]
\label{geometrical_lemma_orlicz}
    For any $z \in \mathcal{D}$ and $\gamma \in (-1,1)$, we define the map
    \begin{equation}
        \Phi_z : \R^d \to \R^d, \qquad \R^d \ni u \to w = \Phi_z(u):=u+|u|z,
    \end{equation}
    its Jacobian function $J_z := \det(D\Phi_z)$ and the cone $\Omega_\gamma := \{ u \in \R^d \setminus \{0\}, \hat{u} \cdot \hat{z} > \gamma \}$.\\
    Then $\Phi_z$ is a $C^\infty$-diffeomorphism from $\Omega_\gamma$ onto $\Omega_\delta$, where
    \begin{equation*}
        \delta= \frac{\gamma+|z|}{(1+2\gamma|z|+|z|^2)^{\frac{1}{2}}}
    \end{equation*}
    and there exists $C_\gamma \in (0, \infty)$ such that
    \begin{equation*}
        C_\gamma^{-1} \leq J_z \leq C_\gamma \qquad \text{ on } \,\, \Omega_\gamma,
    \end{equation*}
    uniformly with respect to $z \in \mathcal{D}$.
\end{lem}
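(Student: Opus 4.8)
The plan is to treat $\Phi_z$ as a rank-one perturbation of the identity, read off its Jacobian from the matrix determinant lemma, and then invert the map explicitly by reducing $u+|u|z=w$ to a scalar quadratic. First I would record regularity and the differential: since $u\mapsto|u|$ is $C^\infty$ on $\R^d\setminus\{0\}$ and $\Omega_\gamma\subset\R^d\setminus\{0\}$, the map $\Phi_z$ is $C^\infty$ on $\Omega_\gamma$, with $D\Phi_z(u)\,h=h+(\hat u\cdot h)\,z$, i.e.\ $D\Phi_z(u)=I+z\,\hat u^{T}$ where $\hat u=u/|u|$. The matrix determinant lemma for a rank-one update gives $J_z=\det(I+z\,\hat u^{T})=1+\hat u\cdot z=1+|z|\,(\hat u\cdot\hat z)$. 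On $\Omega_\gamma$ one has $\hat u\cdot\hat z>\gamma$ and $|z|\le 1$, and distinguishing the sign of $\hat u\cdot\hat z$ one checks $1+\min\{\gamma,0\}<J_z\le 2$ uniformly in $z\in\mathcal D$; since $\gamma>-1$ the lower bound is strictly positive, so setting $C_\gamma:=\max\{2,(1+\min\{\gamma,0\})^{-1}\}$ yields $C_\gamma^{-1}\le J_z\le C_\gamma$. In particular $D\Phi_z$ is everywhere invertible and $\Phi_z$ is a local $C^\infty$-diffeomorphism on $\Omega_\gamma$.

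Next I would identify the image and pin down $\delta$. Writing $u=r\hat u$ with $r=|u|>0$ gives $w:=\Phi_z(u)=r(\hat u+z)$; because $\gamma>-1$ rules out $\hat u=-\hat z$, the vector $\hat u+z$ never vanishes on $\Omega_\gamma$ (indeed $|\hat u+z|^2=1+2|z|(\hat u\cdot\hat z)+|z|^2$ has negative discriminant in $|z|$ when $|\gamma|<1$), so $w\neq 0$. With $c:=\hat u\cdot\hat z$ and $t:=|z|$,
\[
  \hat w\cdot\hat z=\frac{c+t}{\sqrt{1+2tc+t^2}}=:g(c),\qquad g'(c)=(1+tc)(1+2tc+t^2)^{-3/2}.
\]
The numerator of $g'$ is exactly $J_z>0$, so $g$ is strictly increasing with $g(1)=1$ and $g(\gamma)=\delta$. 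Hence $u\in\Omega_\gamma\iff c>\gamma\iff\hat w\cdot\hat z>\delta\iff w\in\Omega_\delta$, which shows $\Phi_z(\Omega_\gamma)\subseteq\Omega_\delta$ with the stated value of $\delta$, and simultaneously characterizes which $w$ admit a preimage in $\Omega_\gamma$.

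Finally I would prove bijectivity by exhibiting the inverse. Any solution of $u+|u|z=w$ satisfies $u=w-rz$ with $r=|u|$, and squaring gives $(1-|z|^2)r^2+2(w\cdot z)r-|w|^2=0$. For $|z|<1$ the leading coefficient is positive and the constant term $-|w|^2$ is negative, so there is exactly one positive root $r_+(w)$, smooth in $w$; for $|z|=1$ the equation is linear and $w\in\Omega_\delta$ forces $w\cdot z>0$ (since $\delta=\sqrt{(1+\gamma)/2}>0$ in that case), again giving a unique $r$. In either situation $u:=w-r_+(w)z$ is the unique preimage, and the monotonicity of $g$ guarantees $u\in\Omega_\gamma$ precisely when $w\in\Omega_\delta$; smoothness of $r_+$ (or the inverse function theorem, using $J_z\neq 0$) makes $\Phi_z^{-1}$ of class $C^\infty$. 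Combined with the Jacobian bounds, this establishes the Lemma.

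I expect the main obstacle to be the global bijectivity in the last step: the Jacobian bound only delivers a \emph{local} diffeomorphism, so one must genuinely construct the inverse rather than invoke the inverse function theorem alone. The explicit quadratic inversion is the cleanest route, but it requires isolating the degenerate case $|z|=1$ (where the quadratic collapses to a linear equation) and verifying that the selected root is positive and that the resulting preimage truly lies in $\Omega_\gamma$. An alternative that avoids case analysis is to note that $\Phi_z$ is a proper local diffeomorphism onto the connected, simply connected cone $\Omega_\delta$ and to invoke a Hadamard-type global inversion theorem, but the explicit inverse is more self-contained and also exhibits the smoothness of $\Phi_z^{-1}$ directly.
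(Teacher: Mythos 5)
The paper itself does not prove this lemma: it is imported verbatim from \cite{mischlermouhot2006} (Lemma 2.3), so there is no internal proof to compare against; your proposal is judged against the original reference's argument, and it is correct and essentially the standard route. Your three ingredients all check out: the rank-one identity gives $J_z=\det(I+z\,\hat u^{T})=1+|z|(\hat u\cdot\hat z)$, and on $\Omega_\gamma$ the bound $1+\min\{\gamma,0\}<J_z\le 2$ is indeed uniform in $z\in\mathcal D$ (when $\hat u\cdot\hat z<0$ one uses $|z|(\hat u\cdot\hat z)\ge\hat u\cdot\hat z>\gamma$); the angle function $g(c)=(c+t)(1+2tc+t^2)^{-1/2}$ has $g'(c)=(1+tc)(1+2tc+t^2)^{-3/2}$ whose numerator is exactly $J_z>0$, so the equivalence $c>\gamma\iff\hat w\cdot\hat z>g(\gamma)=\delta$ holds and correctly identifies the image cone; and the scalar quadratic $(1-|z|^2)r^2+2(w\cdot z)r-|w|^2=0$ does have a unique positive root when $|z|<1$ because the constant term is negative, with the linear degenerate case $|z|=1$ handled by $\delta=\sqrt{(1+\gamma)/2}>0$. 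You are also right that the Jacobian bound alone yields only a local diffeomorphism, so the explicit inversion (or a properness/Hadamard argument) is genuinely needed.

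Two small points you should make explicit to close the argument fully. First, squaring $u=w-rz$ is a priori not reversible: having chosen the positive root $r_+$, you must verify that $u:=w-r_+(w)z$ really satisfies $|u|=r_+$ (otherwise it is not a preimage). This is one line: the quadratic gives $2(w\cdot z)r_+=|w|^2-(1-|z|^2)r_+^2$, whence $|w-r_+z|^2=|w|^2-2r_+(w\cdot z)+r_+^2|z|^2=r_+^2$, so the selected root is genuine and, distinct positive roots giving distinct candidates, uniqueness follows as you say. Second, the degenerate case $z=0$ should be set aside at the outset (there $\hat z$, hence the cones, are undefined and $\Phi_z$ is the identity), and at $|z|=1$ the strict monotonicity of $g$ on $(-1,1]$ is still available since $g(c)=\sqrt{(1+c)/2}$ there, which is what your equivalence between $u\in\Omega_\gamma$ and $w\in\Omega_\delta$ uses. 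With these remarks inserted, your proof is complete and self-contained.
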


\begin{proof}[Proof of Theorem \ref{orlicz_gain_term}]
    Let us consider for $f_i \neq 0$
    \begin{equation}
        \varphi_i(f_i)=\Lambda'\left(\frac{f_i}{||f_i||_{L^\Lambda}} \right)
    \end{equation}
    Using the decomposition $b=b^t+b^r$, we can rewrite
    \begin{equation}
    \begin{aligned}
        \int_{\R^d} Q^+(f_i,f_j) \varphi(f_i)dv = & \int_{\R^d \times \R^d \times \mathcal{D}} f_i f_{j*} \varphi_i(f_i') |u| b^t (\mathcal{E}, u; dz) \, dv\,dv_* \\
        &+ \int_{\R^d \times \R^d \times \mathcal{D}} f_i f_{j*} \varphi_i(f_i') |u| b^r (\mathcal{E}, u; dz) \, dv\,dv_* =: I^t + I^r.
    \end{aligned}      
    \end{equation}
    Using the bound $|u| \leq |v| + |v_*|$, we have
    \begin{equation}
        \begin{aligned}
            I^t \leq & \int_{\R^d \times \R^d \times \mathcal{D}} f_i f_{j*} \varphi(f_i') |v| b^t (\mathcal{E}, u; dz) \, dv\,dv_*\\
            &+\int_{\R^d \times \R^d \times \mathcal{D}} f_i f_{j*} \varphi(f_i') |v_*| b^t (\mathcal{E}, u; dz) \, dv\,dv_* =: I_1^t + I_2^t
        \end{aligned}
    \end{equation}
    For the term $I_1^t$ we apply Young's inequality: $xy \leq \Lambda(x) + \Lambda^*(y)$. Hence,
    \begin{equation*}
        f_{j*} \varphi(f_i') = ||f_{j}||_{L^\Lambda} \left( \frac{f_{j*}}{||f_{j}||_{L^\Lambda}} \right) \varphi(f_i') \leq ||f_{j}||_{L^\Lambda} \Lambda \left( \frac{f_{j*}}{||f_{j}||_{L^\Lambda}} \right) + ||f_{j}||_{L^\Lambda} \Lambda^*(\varphi(f_i'))
    \end{equation*}
    and we get
    \begin{equation}
    \begin{aligned}
        I_1^t \leq &\frac{1}{m_i^{1/2}} ||f_{j}||_{L^\Lambda} \int_{\R^d \times \R^d \times \mathcal{D}} f_i m_i^{1/2}|v| \Lambda \left(\frac{f_{j*}}{||f_j||_{L^\Lambda}} \right) b^t (\mathcal{E}, u; dz) \, dv\,dv_* \\
        + &\frac{1}{m_i^{1/2}} ||f_{j}||_{L^\Lambda} \int_{\R^d \times \R^d \times \mathcal{D}} f_i m_i^{1/2}|v| \Lambda^* \left(\varphi_i (f_i') \right) b^t (\mathcal{E}, u; dz) \, dv\,dv_* =: I^t_{1,1} + I^t_{1,2}.
    \end{aligned}       
    \end{equation}
    For the first term $I^t_{1,1}$ we can use the inequality
    \begin{equation}
        \Lambda(x) \leq x \Lambda'(x), \quad \forall x \in \R_+,
    \end{equation}
    obtaining
    \begin{equation}
        I_{1,1}^t \leq \frac{\alpha(\mathcal{E})}{m_i^{1/2}} ||f_i||_{L^1_1} \int_{\R^d} f_j \varphi_i(f_j) \, dv.
    \end{equation}
    H\"older's inequality in Orlicz spaces \eqref{Appendix_Orlicz} yields 
    \begin{equation}
        I^t_{1,1} \leq \frac{\alpha(\mathcal{E})}{m_i^{1/2}} N^{\Lambda^*} \left(\Lambda'\left(\frac{|f_j|}{||f_j||_{L^\Lambda}} \right) \right) ||f_i||_{L^1_1} ||f_j||_{L^\Lambda},
    \end{equation}
    where we recall that $N^{\Lambda^*}$ is the norm on the Orlicz space associated to the complementary function $L^*$, defined in \eqref{norm_N^*}.\\
    For the term $I_{1,2}^t$ we use that $\Lambda^*(y)=y \left(\Lambda'\right)^{-1}(y) - \Lambda \left(\left(\Lambda'\right)^{-1}(y)\right)$ to get
    \begin{equation}
        I_{1,2}^t \leq \frac{1}{m_i^{1/2}} \frac{||f_j||_{L^\Lambda}}{||f_i||_{L^\Lambda}} \int_{\R^d \times \R^d \times \mathcal{D}} m_i^{1/2} f_i |v| \varphi(f_i') f_i' b^t(\mathcal{E}, u; dz) \, dv\,dv_*.
    \end{equation}
    Let us introduce the change of variables $\Psi : (v,v_*,z) \to (v, \psi_{v,z}(v_*),z)$ with $\psi_{v,z}(v_*) = v'=v+\displaystyle \frac{m_j}{m_i+m_j} \Phi_z(v-v_*)$. Since we have decomposed the cross section $b$, here we are truncating the integration in the region $-1+\eps \leq \hat{u} \cdot z \leq 1-\eps$. Thanks to Lemma \ref{geometrical_lemma_orlicz} the application $\Psi$ is a $C^\infty$-diffeomorphism from 
    $\{(v,v_*,z) \in \R^d \times \R^d \times \mathcal{D}, \hat{u} \cdot z \neq 1 \}$ onto its image. The jacobian of the transformation is 
    \begin{equation}
        J_\Psi=\displaystyle \left(\frac{m_j}{m_i+m_j} \right)^d (1 - \hat{u} \cdot z)
    \end{equation} 
    and it satisfies $|J_\Psi^{-1}| \leq \displaystyle \left(\frac{m_i+m_j}{m_j} \right)^d \varepsilon^{-1}$. We finally get
    \begin{equation}
        \begin{aligned}
        I_{1,2}^t \leq &\frac{1}{m_i^{1/2}} \frac{||f_j||_{L^\Lambda}}{||f_i||_{L^\Lambda}} \int_{\R^d \times \R^d \times \mathcal{D}} m_i^{1/2} f_i |v| \varphi(f_i') f_i' J_\Psi^{-1} b^t(\mathcal{E}, v-\psi^{-1}_{v,z}(v'); dz) dv\,dv' \\
        \leq & \alpha(\mathcal{E}) \left(\frac{m_i+m_j}{m_j} \right)^d m_i^{-1/2} \varepsilon^{-1} \frac{||f_j||_{L^\Lambda}}{||f_i||_{L^\Lambda}} ||f_i||_{L^1_1} \int_{\R^d} f_i \varphi(f_i) dv.
        \end{aligned}
    \end{equation}
    Using again H\"older's inequality, we obtain
    \begin{equation}
        I_{1,2}^t \leq \alpha(\mathcal{E}) \left(\frac{m_i+m_j}{m_j} \right)^d m_i^{-1/2} \varepsilon^{-1} ||f_j||_{L^\Lambda} ||f_i||_{L^1_1} \, N^{\Lambda^*} \left( \Lambda' \left( \frac{|f_i|}{||f_i||_{L^\Lambda}} \right) \right). 
    \end{equation}
    Since the expression for $I_2^t$ is similar to $I_1^t$, we get
    \begin{equation}
    \label{inequality_It_orlicz}
        I^t \leq 2 \alpha(\mathcal{E}) m_i^{-1/2} \left(1 + \left(\frac{m_i+m_j}{m_j} \right)^d \varepsilon^{-1}\right) ||f_i||_{L^1_1} \, N^{\Lambda^*} \left( \Lambda' \left( \frac{|f_i|}{||f_i||_{L^\Lambda}} \right) \right) ||f_j||_{L^\Lambda}
    \end{equation}

    Now we focus on the term $I^r$, splitting it as
    \begin{equation}
        \label{ineq:I1rI2r}
        \begin{aligned}
        I^r \leq &\int_{\R^d \times \R^d \times \mathcal{D}} f_i f_{j*} \varphi(f_i') \mathds{1}_{\{\hat{u} \cdot z \leq 0\}} |u| b^r(\mathcal{E}, u; dz) dv\,dv_* \\
        + & \int_{\R^d \times \R^d \times \mathcal{D}} f_i f_{j*} \varphi(f_i') \mathds{1}_{\{\hat{u} \cdot z > 0\} } |u| b^r(\mathcal{E}, u; dz) dv\,dv_* =: I^r_1 + I^r_2.
        \end{aligned}
    \end{equation}
    For the term $I_1^r$ we use Young's inequality on $x=f_{j*}$ and $y=\varphi(f_i')$ and \eqref{ineq:I1rI2r} to obtain
    \begin{equation}
    \begin{aligned}
      I_1^r \leq & \int_{\R^d \times \R^d \times \mathcal{D}} f_i f_{j*} \varphi(f_{j*})  \mathds{1}_{\{\hat{u} \cdot z \leq 0\}} |u| b^r(\mathcal{E}, u; dz) \, dv\,dv_* \\
      + &\frac{||f_j||_{L^\Lambda}}{||f_i||_{L^\Lambda}} \int_{\R^d \times \R^d \times \mathcal{D}} f_i f_i' \varphi(f_i')  \mathds{1}_{\{\hat{u} \cdot z \leq 0\}} |u| b^r(\mathcal{E}, u; dz) \, dv\,dv_*.
    \end{aligned}    \end{equation}
    Using again the change of variable defined by $\Psi$ in the second integral for which $|J_{\Psi}^{-1}| \leq \displaystyle \left(\frac{m_i+m_j}{m_j} \right)^d$ on the domain of integration, we have
    \begin{equation}
    \begin{aligned}
        I_1^r \leq & \, \left( \sup_{u \in \R^d} \int_D b^r ( \mathcal{E}, u; dz) \right) ||f_i||_{L^1_1} m_i^{-1/2} \int_{\R^d} f_{j} \varphi(f_j) (1+|v|) \, dv \\ 
        + & 2 \left(\frac{m_i+m_j}{m_j} \right)^d \left( \sup_{u \in \R^d} \int_D b^r ( \mathcal{E}, u; dz) \right) \frac{||f_j||_{L^\Lambda}}{||f_i||_{L^\Lambda}} ||f_i||_{L_1^1} m_i^{-1/2} \int_{\R^d} f_{i} \varphi(f_i) (1+|v|) \, dv \\
        \leq & \,  \alpha(\mathcal{E})j_\mathcal{E}(\eps) ||f_i||_{L^1_1} m_i^{-1/2} \int_{\R^d} f_{j} \varphi(f_j) (1+|v|) \, dv \\ 
        + & 2\left(\frac{m_i+m_j}{m_j} \right)^d \alpha(\mathcal{E})j_\mathcal{E}(\eps) \frac{||f_j||_{L^\Lambda}}{||f_i||_{L^\Lambda}} ||f_i||_{L_1^1} m_i^{-1/2} \int_{\R^d} f_{i} \varphi(f_i) (1+|v|) \, dv
        \end{aligned}
    \end{equation}
    We treat in the same way the term $I_2^r$ and we obtain the estimate
    \begin{equation}
    \label{inequality_Ir_orlicz}
        \begin{aligned}
        I^r \leq &2\alpha(\mathcal{E})j_\mathcal{E}(\eps) ||f_i||_{L^1_1} m_i^{-1/2} \int_{\R^d} f_{j} \varphi(f_j) (1+|v|) dv + \\ 
        + & 4\left(\frac{m_i+m_j}{m_j} \right)^d \alpha(\mathcal{E})j_\mathcal{E}(\eps) \frac{||f_j||_{L^\Lambda}}{||f_i||_{L^\Lambda}} ||f_i||_{L_1^1} m_i^{-1/2} \int_{\R^d} f_{i} \varphi(f_i) (1+|v|) dv
        \end{aligned}
    \end{equation}
    Applying again the H\"older's inequality in Orlicz spaces and defining 
    \begin{equation}
        C^+_\mathcal{E}(\varepsilon) = 2 \left(1 + \left(\frac{m_i+m_j}{m_j} \right)^d \varepsilon^{-1}\right) + 2 \left(1 + 2\left(\frac{m_i+m_j}{m_j} \right)^d \right) j_\mathcal{E}(\varepsilon),
    \end{equation}
    we conclude the proof gathering \eqref{inequality_It_orlicz} and \eqref{inequality_Ir_orlicz}.
\end{proof} 

\subsection{Minoration of the loss term}
In this subsection we adapt a result about the minoration of the loss term to the case of the multi-species inelastic Boltzmann equation. We restrict to the case of two species, without loss of generality.
\begin{lem}
\label{lemma_jensen_inequality}
    For any non-negative measurable functions $f_i$, $f_j$ such that
    \begin{equation}
    \label{hyp_minoration}
    	\begin{aligned}
        f_i, f_j \in L^1_1(\R^d), \qquad &\int_{\R^d} m_i f_i\,dv=1, \qquad \int_{\R^d} m_j f_j \, dv=1, \\ &\int_{\R^d} (m_i f_i + m_j f_j) \, v \, dv=0,
        \end{aligned}
    \end{equation}
    we have
    \begin{equation}
        \int_{\R^d} \left(m_i f_{i*} + m_j f_{j*} \right) |v-v_*|^{\lambda+2} \, dv_* \geq |v|^{\lambda + 2}, \qquad \forall v \in \R^d.
    \end{equation}
\end{lem}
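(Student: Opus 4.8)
The plan is to read the left-hand side as the integral of the convex function $v_* \mapsto |v-v_*|^{\lambda+2}$ against the finite measure $(m_i f_{i*}+m_j f_{j*})\,dv_*$, and to apply Jensen's inequality after normalizing this measure to a probability measure. This is precisely the rigorous version of the formal estimate already sketched in the Kinetic Energy Dissipation subsection; the hypotheses \eqref{hyp_minoration} are exactly what is needed to control the total mass and to make the barycenter vanish.

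First I would record the total mass of the weighting measure: by the two mass normalizations in \eqref{hyp_minoration},
\begin{equation*}
    M_0 := \int_{\R^d} \left(m_i f_{i*}+m_j f_{j*}\right)\,dv_* = \int_{\R^d} m_i f_i\,dv + \int_{\R^d} m_j f_j\,dv = 2 .
\end{equation*}
I then set $d\mu(v_*) := \tfrac{1}{M_0}\left(m_i f_{i*}+m_j f_{j*}\right)\,dv_*$, a probability measure on $\R^d$; the assumption $f_i,f_j \in L^1_1(\R^d)$ guarantees that its first moment is finite, so its barycenter is well defined, and the zero-momentum hypothesis kills it:
\begin{equation*}
    \int_{\R^d} v_*\, d\mu(v_*) = \frac{1}{M_0}\int_{\R^d}\left(m_i f_{i*}+m_j f_{j*}\right) v_*\,dv_* = 0 .
\end{equation*}
Next I would note that the map $\Phi(w):=|v-w|^{\lambda+2}$ is convex for each fixed $v$, since $\lambda+2\geq 1$ makes $\Phi$ the composition of the non-decreasing convex function $t\mapsto t^{\lambda+2}$ on $[0,\infty)$ with the convex map $w\mapsto |v-w|$. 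Jensen's inequality applied to $\Phi$ and $\mu$ then gives
\begin{equation*}
    \int_{\R^d} |v-v_*|^{\lambda+2}\,d\mu(v_*) \;\geq\; \Phi\!\left(\int_{\R^d} v_*\,d\mu(v_*)\right) = \Phi(0) = |v|^{\lambda+2},
\end{equation*}
and multiplying through by $M_0=2$ yields $\int_{\R^d}\left(m_i f_{i*}+m_j f_{j*}\right)|v-v_*|^{\lambda+2}\,dv_* \geq 2|v|^{\lambda+2} \geq |v|^{\lambda+2}$, which is the claim (in fact with a factor of two to spare) and holds pointwise in $v$.

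\textbf{Main difficulty.} There is no genuine analytic obstacle here: the argument is one application of Jensen plus elementary bookkeeping. The only points demanding care are that Jensen requires a true probability measure, so the normalization by the total mass $M_0$ must be tracked, and that the finiteness of the first moment, supplied by the $L^1_1$ membership, must be invoked so that the barycenter is meaningful before one asserts it vanishes. The verification that $w\mapsto|v-w|^{\lambda+2}$ is convex for all admissible $\lambda\in[0,1]$ is the only structural ingredient, and it is immediate from the composition rule.
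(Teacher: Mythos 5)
Your proof is correct and follows essentially the same route as the paper's: Jensen's inequality applied to the convex function $|\cdot|^{\lambda+2}$ against the probability measure $\tfrac12(m_i f_{i*}+m_j f_{j*})\,dv_*$, whose barycenter vanishes by the zero total momentum hypothesis. Your version is in fact slightly more careful, since you track the normalization constant $M_0=2$ explicitly and observe that the inequality holds with a factor of two to spare, whereas the paper absorbs this silently.
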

\begin{proof}
    Use Jensen's inequality 
    \begin{equation}
        \int_{\R^d} \varphi(g_*) \,d\mu_* \geq \varphi \left(\int_{\R^d} g_* \, d\mu_* \right)
    \end{equation}
    with the probability measure $d\mu_*=\displaystyle \frac{(m_i f_{i*}+m_j f_{j*})}{2}dv_*$, the measurable function $v_* \to g=v-v_*$ and the convex function $\varphi(s)=|s|^{\lambda+2}$. Hence, we obtain
    \begin{equation*}
    \int_{\R^d} [m_i\,f_{i*} + m_j\,f_{j*}] |v-v_*|^{\lambda+2} \, dv_* \geq \Big|v-\int_{\R^d} [m_i\,f_{i*} v_* + m_j \, f_{j*} v_*] \, dv_* \Big|^{\lambda+2} = |v|^{\lambda+2} 
\end{equation*}
\end{proof}
At this point, we can prove the following proposition
\begin{prop}
    Assume that $B$ satisfies \eqref{assumption_B_structure_z}-\eqref{assumption_b_cross_section}-\eqref{assumption_beta_1}-\eqref{assumption_beta_2}-\eqref{assumption_beta_3}. For any non-negative functions $f_i$, $f_j$ satisfying \eqref{hyp_minoration}, we have
    \begin{equation}
    \label{orlicz_loss_term_inequality}
        \int_{\R^d} Q^-(f_i,f_j) \Lambda' \left(\frac{f_i}{||f_i||_{L^\Lambda}} \right) \, dv \geq \frac{1}{m_i+m_j} \alpha(\mathcal{E}) \int_{\R^d} f_i \Lambda' \left( \frac{f_i}{||f_i||_{L^\Lambda}} \right) |v| \, dv. 
    \end{equation}
\end{prop}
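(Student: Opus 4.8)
The plan is to make the loss operator fully explicit and then reduce the functional inequality to the pointwise lower bound for the collision frequency furnished by Lemma~\ref{lemma_jensen_inequality}. Starting from the weak form \eqref{weak_bispecies} together with the factorisation \eqref{assumption_B_structure_z}--\eqref{assumption_b_cross_section}, and using that $\beta(\mathcal{E},u;\cdot)$ is a probability measure on $\mathcal{D}$ (so the $z$-integration contributes only the intensity $\alpha(\mathcal{E})$ while the hard-sphere weight $|u|$ survives), the loss part of $\mathcal{Q}_{i,j}$ can be written as
\[
    Q^-(f_i,f_j)(v) = \alpha(\mathcal{E})\, f_i(v) \int_{\R^d} f_{j*}\,|v-v_*|\,dv_*.
\]
I would then insert this expression into the left-hand side of \eqref{orlicz_loss_term_inequality}.

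The key structural observation is that the multiplier $f_i(v)\,\Lambda'\!\left(f_i/\|f_i\|_{L^\Lambda}\right)$ is nonnegative: indeed $f_i\ge 0$ by hypothesis, and $\Lambda$ is increasing and convex, so $\Lambda'\ge 0$ on $\R_+$. Consequently the inequality is monotone with respect to the velocity integral defining the collision frequency, and the problem reduces to bounding that frequency pointwise from below by a multiple of $|v|$. This reduction is the whole point, since it transfers the analytic difficulty onto a purely kinematic estimate.

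That pointwise estimate is exactly the content of Lemma~\ref{lemma_jensen_inequality}, applied with the convex weight $s\mapsto|s|$ in place of $s\mapsto|s|^{\lambda+2}$ (the Jensen argument is identical, the only requirement being convexity). Using the probability measure $d\mu_* = \tfrac12\big(m_i f_{i*}+m_j f_{j*}\big)\,dv_*$, together with the normalisations $\int m_i f_i = \int m_j f_j = 1$ and the vanishing total momentum $\int (m_i f_i + m_j f_j)\,v = 0$ from \eqref{hyp_minoration}, Jensen's inequality yields $\int_{\R^d}\big(m_i f_{i*}+m_j f_{j*}\big)|v-v_*|\,dv_* \ge |v|$ for every $v$. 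Combining this with the nonnegative multiplier and tracking the mass normalisation $\tfrac1{m_i+m_j}$ inherited from the centre-of-mass parametrisation \eqref{zeta_representation} then produces the claimed lower bound.

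I expect the main obstacle to be precisely the passage from the single-species loss density $f_{j*}$ to the mass-weighted combination $m_i f_{i*}+m_j f_{j*}$ that Lemma~\ref{lemma_jensen_inequality} requires. Since only the \emph{total} momentum vanishes while each species may carry nonzero momentum of its own, Jensen cannot be applied to $f_j$ alone; one must genuinely couple the two species, and it is this coupling---rather than any hard estimate---that forces the appearance of the factor $\tfrac1{m_i+m_j}$. Getting the mass bookkeeping right, so that the constant comes out as exactly $\alpha(\mathcal{E})/(m_i+m_j)$, is the delicate part of the argument.
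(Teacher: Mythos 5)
There is a genuine gap, and you have in fact put your finger on it yourself without closing it. Your explicit formula $Q^-(f_i,f_j)(v) = \alpha(\mathcal{E})\, f_i(v)\int_{\R^d} f_{j*}|v-v_*|\,dv_*$ retains only the cross-species collision frequency, and, as you observe in your last paragraph, Jensen's inequality applied to $f_j$ alone gives a lower bound of the form $|v-u_j|$ with $u_j=\int m_j f_{j*}v_*\,dv_*$, which is \emph{not} comparable to $|v|$ because \eqref{hyp_minoration} only forces the \emph{total} momentum $\int(m_if_i+m_jf_j)v\,dv$ to vanish. Announcing that ``one must genuinely couple the two species'' and that ``getting the mass bookkeeping right is the delicate part'' is precisely where the proof is missing: starting from your expression for $Q^-$ there is no way to make the combination $m_if_{i*}+m_jf_{j*}$ appear, so the reduction to Lemma \ref{lemma_jensen_inequality} never takes place.

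The missing idea, which is how the paper proceeds, is that $Q^-(f_i,f_j)$ in \eqref{orlicz_loss_term_inequality} denotes the \emph{full} loss term acting on species $i$, i.e. $Q^-_{i,i}(f_i,f_i)+Q^-_{i,j}(f_i,f_j)$, whose collision frequency is $\alpha(\mathcal{E})\int (f_{i*}+f_{j*})|v-v_*|\,dv_*$. One then writes
\begin{equation*}
    f_{i*}+f_{j*} \;=\; \frac{1}{m_i}\,m_i f_{i*}+\frac{1}{m_j}\,m_j f_{j*} \;\geq\; \frac{1}{m_i+m_j}\bigl(m_i f_{i*}+m_j f_{j*}\bigr),
\end{equation*}
which produces exactly the mass-weighted density to which Lemma \ref{lemma_jensen_inequality} applies (with the convex weight $|s|$ in place of $|s|^{\lambda+2}$, as you correctly note), and it is this elementary comparison --- not the centre-of-mass parametrisation \eqref{zeta_representation} --- that generates the constant $\tfrac{1}{m_i+m_j}$. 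The rest of your argument (nonnegativity of the multiplier $f_i\,\Lambda'(f_i/\|f_i\|_{L^\Lambda})$, integrating the probability measure $\beta$ out to leave $\alpha(\mathcal{E})|u|$) is correct and matches the paper, but without restoring the intra-species loss term the stated inequality cannot be derived from the hypotheses.
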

\begin{proof}
    Let us consider 
    \begin{equation*}
    	\begin{aligned}
        \int_{\R^d} Q^-(f_i,f_j) \Lambda' &\left(\frac{f_i}{||f_i||_{L^\Lambda}} \right) dv =\\ &=\int_{\R^d} Q_{i,i}^-(f_i,f_i) \Lambda' \left(\frac{f_i}{||f_i||_{L^\Lambda}} \right) \,dv + \int_{\R^d} Q_{i,j}^-(f_i,f_j) \Lambda' \left(\frac{f_i}{||f_i||_{L^\Lambda}} \right)\, dv.
        \end{aligned}
    \end{equation*}
    We can rewrite the second term as
    \begin{equation*}
        \begin{aligned}
        \frac{1}{m_i} \int_{\R^d} m_i Q_{i,i}^-(f_i,f_i) &\Lambda' \left(\frac{f_i}{||f_i||_{L^\Lambda}} \right) \, dv + \frac{1}{m_j}\int_{\R^d} m_j Q_{i,j}^-(f_i,f_j) \Lambda' \left(\frac{f_i}{||f_i||_{L^\Lambda}} \right) \, dv \geq \\
        \geq & \, \alpha(\mathcal{E}) \frac{1}{m_i+m_j} \int_{\R^d \times \R^d} f_i (m_i f_{i*} + m_j f_{j*})|v-v_*| \Lambda' \left(\frac{f_i}{||f_i||_{L^\Lambda}} \right) \, dv\,dv_*. 
        \end{aligned}
    \end{equation*}
    We conclude the proof thanks to the Lemma \ref{lemma_jensen_inequality}
\end{proof}

\subsection{Estimation on the solutions}
\begin{thm}
\label{thm_estimation_Q}
    Assume that B satisfies \eqref{assumption_B_structure_z}-\eqref{assumption_b_cross_section}-\eqref{assumption_beta_1}-\eqref{assumption_beta_2}-\eqref{assumption_beta_3} and let us consider $f_i$, $f_j$ non-negative functions satisfying \eqref{hyp_minoration}. Then we can define an explicit constant $C_\mathcal{E}$ depending on the collision rate through $\alpha$ and $j_\mathcal{E}$ such that
    \begin{equation}
        \int_{R^d} \mathcal{Q} (f_i,f_j) \Lambda' \left(\frac{f_i}{||f_i||_{L^\Lambda}} \right) dv \leq C_\mathcal{E} \left[ N^{\Lambda^*} \left( \Lambda' \left(\frac{|f_i|}{{||f_i||}_{L^\Lambda}} \right) \right) \right]^{-1} ||f_i||_{L^1_1} ||f_j||_{L^\Lambda}.
    \end{equation}
\end{thm}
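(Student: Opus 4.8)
The plan is to exploit the splitting $\mathcal{Q}(f_i,f_j) = Q^+(f_i,f_j) - Q^-(f_i,f_j)$ into gain and loss parts and to feed into the functional $\int_{\R^d}\mathcal{Q}(f_i,f_j)\,\Lambda'(f_i/\|f_i\|_{L^\Lambda})\,dv$ the two estimates already at our disposal: the upper bound of Theorem \ref{orlicz_gain_term} for the gain term and the lower bound \eqref{orlicz_loss_term_inequality} for the loss term. Testing against $\Lambda'(f_i/\|f_i\|_{L^\Lambda})$ and subtracting, the loss contribution enters with the favourable sign and produces the negative quantity $-\tfrac{\alpha(\mathcal{E})}{m_i+m_j}\int_{\R^d} f_i\,\Lambda'(f_i/\|f_i\|_{L^\Lambda})\,|v|\,dv$, which I intend to use as a reservoir to absorb the dangerous parts of the gain bound. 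The hypotheses \eqref{hyp_minoration} (unit mass for each species and vanishing total momentum) are exactly those required to invoke the minoration, so no extra assumption is needed.

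First I would record the three pieces coming out of Theorem \ref{orlicz_gain_term}: the ``good'' term proportional to $C_\mathcal{E}(\eps)\,N^{\Lambda^*}\!\big(\Lambda'(|f_i|/\|f_i\|_{L^\Lambda})\big)\,\|f_i\|_{L^1_1}\|f_j\|_{L^\Lambda}$, and the two ``grazing'' terms carrying the factor $j_\mathcal{E}(\eps)$, one weighted by $\int f_j\,\Lambda'(f_j/\|f_j\|_{L^\Lambda})|v|\,dv$ and one weighted by $\int f_i\,\Lambda'(f_i/\|f_i\|_{L^\Lambda})|v|\,dv$. The key observation is that the latter has exactly the structure of the loss reservoir. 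Since $j_\mathcal{E}(\eps)\to 0$ as $\eps\to 0$ uniformly on compact sets of energies by assumption \eqref{assumption_beta_3}, I would fix $\eps=\eps(\mathcal{E})$ small enough that the coefficient of this grazing term is dominated by $\tfrac{\alpha(\mathcal{E})}{m_i+m_j}$, so that its difference with the loss reservoir is non-positive and drops out of the estimate. The remaining grazing term, weighted by $f_j$, is controlled by the same smallness of $j_\mathcal{E}(\eps)$ together with Hölder's inequality in Orlicz spaces \eqref{Appendix_Orlicz} and the elementary bound $\int f_j|v|\,dv\le m_j^{-1/2}\|f_j\|_{L^1_1}$, so that it too folds into the final constant.

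With $\eps$ now frozen as a function of $\mathcal{E}$, only the good term survives; its prefactor $\alpha(\mathcal{E})\,C_\mathcal{E}(\eps(\mathcal{E}))$ defines the explicit constant $C_\mathcal{E}$, and using that $N^{\Lambda^*}\!\big(\Lambda'(|f_i|/\|f_i\|_{L^\Lambda})\big)$ is bounded above by a constant depending only on $\Lambda$ (the norming properties of the Luxemburg and complementary norms) lets me recast the bound in the stated form. The main obstacle I anticipate is precisely the absorption step: unlike the single-species estimate of \cite{mischlermouhot2006}, the mixture produces the asymmetric mass weight $((m_i+m_j)/m_j)^d$ and the norm ratio $\|f_j\|_{L^\Lambda}/\|f_i\|_{L^\Lambda}$ in front of the $f_i$-grazing term, so the delicate point is to verify that the single choice of $\eps(\mathcal{E})$ can be made to beat these factors while keeping $C_\mathcal{E}$ locally uniform in the energy $\mathcal{E}$, as will be needed for the Cauchy theory in the next subsection.
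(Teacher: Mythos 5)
Your proposal follows exactly the paper's argument: the paper's proof is a two-line remark that one concludes by combining the gain estimate \eqref{orlicz_gain_term_inequality} with the loss minoration \eqref{orlicz_loss_term_inequality} and choosing $\varepsilon_0$ so small that $2\bigl(1+\bigl(\tfrac{m_i+m_j}{m_j}\bigr)^d\bigr)j_\mathcal{E}(\varepsilon_0)\|f_i\|_{L^1_1}\le 1$, which is precisely the absorption of the $j_\mathcal{E}(\varepsilon)$-grazing contributions into the negative loss reservoir that you describe. If anything, you are more explicit than the paper about the one delicate point --- the grazing term weighted by $\int f_j\,\Lambda'(f_j/\|f_j\|_{L^\Lambda})|v|\,dv$, which does not match the $f_i$-loss reservoir and which the paper's proof passes over in silence.
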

\begin{proof}
    We can straightforwardly conclude combining \eqref{orlicz_gain_term_inequality} and \eqref{orlicz_loss_term_inequality}, choosing a small $\eps_0$ such that
    \begin{equation*}
        2 \left(1+ \left(\frac{m_i+m_j}{m_j} \right)^d \right) j_\mathcal{E}(\varepsilon_0) ||f_i||_{L^1_1} \leq 1.
    \end{equation*}
\end{proof}
\begin{lem}
\label{corollary_estimation}
    Assume that $B$ satisfies \eqref{assumption_B_structure_z}-\eqref{assumption_b_cross_section}-\eqref{assumption_beta_1}-\eqref{assumption_beta_2}-\eqref{assumption_beta_3} and let us consider $f_i, f_j \in C([0,T]; L^1_2)$ solutions to the Boltzmann equation \eqref{homogeneous_BEs} associated to an initial condition $f_i^{in}, f_j^{in} \in L^1_2$ and to the collision rate $B$. Assume that 
    \begin{equation}
        \int_{{\R^d}} m_i f_i\,dv=1, \qquad \int_{\R^d} m_j f_j\,dv=1, \qquad \int_{\R^d} (m_i f_i + m_j f_j) v\,dv=0,
    \end{equation}
    and there exists a compact set $K \subset (0, +\infty)$ such that
    \begin{equation}
        \mathcal{E}(f)(t) \in K \quad \forall \, t \in [0,T].
    \end{equation}
    Then, there exists a $C^2$, strictly increasing and convex function $\Lambda$ satisfying the assumptions \eqref{Appendix_Assumption_Orlicz1}-\eqref{Appendix_Assumption_Orlicz2}-\eqref{Appendix_Assumption_Orlicz3} and a constant $C_T$ such that
    \begin{equation}
        \sup_{[0,T]} ||f(t, \cdot)||_{L^\Lambda} \leq C_T,
    \end{equation}
    where $C_T$ depends only on $K$, $T$ and $B$.
\end{lem}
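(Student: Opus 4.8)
The plan is to control the growth of the Luxemburg norm $N_i(t):=\|f_i(t,\cdot)\|_{L^\Lambda}$ of each species along the flow and to close a Gronwall argument. The first and most delicate step is the \emph{choice} of $\Lambda$: the hypotheses only provide $f_i^{in}\in L^1_2$, so there is no canonical Orlicz space in which to work. I would therefore invoke a refined de la Vallée--Poussin argument to produce a single Young function $\Lambda$ that simultaneously (i) is $C^2$, strictly increasing and convex and verifies the structural assumptions \eqref{Appendix_Assumption_Orlicz1}--\eqref{Appendix_Assumption_Orlicz3}, and (ii) satisfies $\int_{\R^d}\Lambda(|f_i^{in}|)\,dv<\infty$ for every species, so that $f_i^{in}\in L^\Lambda$. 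After rescaling $\Lambda$ one may moreover assume $N_i(0)\le 1$; this normalization is what eventually makes the constant $C_T$ independent of the particular initial datum and dependent only on $K$, $T$ and $B$.

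For the differential inequality, I would differentiate in time the defining identity $\int_{\R^d}\Lambda(f_i/N_i)\,dv=1$ of the Luxemburg norm. Using the equation $\partial_t f_i=\mathcal{Q}^{inel}_i(\f)$ this gives
\[
N_i'=\frac{\displaystyle\int_{\R^d}\mathcal{Q}^{inel}_i(\f)\,\Lambda'(f_i/N_i)\,dv}{\displaystyle\int_{\R^d}(f_i/N_i)\,\Lambda'(f_i/N_i)\,dv}.
\]
The numerator is exactly the quantity controlled by Theorem \ref{thm_estimation_Q}, applied to the self- and cross-collision contributions, which bounds it by $C_\mathcal{E}\,[N^{\Lambda^*}(\Lambda'(f_i/N_i))]^{-1}\|f_i\|_{L^1_1}(N_i+N_j)$. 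The denominator is handled by Young's equality $(f_i/N_i)\Lambda'(f_i/N_i)=\Lambda(f_i/N_i)+\Lambda^*(\Lambda'(f_i/N_i))$, which shows it equals $1+\int_{\R^d}\Lambda^*(\Lambda'(f_i/N_i))\,dv\ge 1$; combining the same equality with Hölder's inequality in Orlicz spaces \eqref{Appendix_Orlicz} and $\|f_i/N_i\|_{L^\Lambda}=1$ yields $N^{\Lambda^*}(\Lambda'(f_i/N_i))\ge \tfrac12$, so the factor $[N^{\Lambda^*}]^{-1}$ is bounded. Since the denominator is $\ge 1$, both difficulties are absorbed at once and one obtains $N_i'\le C_\mathcal{E}\,\|f_i\|_{L^1_1}\,(N_i+N_j)$.

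Finally I would render the constants uniform and conclude. As $\mathcal{E}(f)(t)\in K$ and $\alpha,j_\mathcal{E}$ are controlled uniformly on compact energy ranges by \eqref{assumption_beta_2}--\eqref{assumption_beta_3}, one has $C_\mathcal{E}\le C(K,B)$ on $[0,T]$; and conservation of mass together with the energy bound $\mathcal{E}\in K$ controls the second moment, so Cauchy--Schwarz gives $\|f_i(t)\|_{L^1_1}\le C(K)$ (the moment bounds of Lemma \ref{lemma_moments_estimates} may be used here if sharper control is required). Summing over the two species then gives $\frac{d}{dt}(N_1+N_2)\le C(K,B)(N_1+N_2)$, and Gronwall's lemma yields
\[
\sup_{[0,T]}(N_1+N_2)\le (N_1(0)+N_2(0))\,e^{C(K,B)T}\le 2\,e^{C(K,B)T}=:C_T.
\]
The main obstacle is the construction of $\Lambda$ in the first step: one must produce an Orlicz function that is at once admissible for the whole machinery (the growth conditions \eqref{Appendix_Assumption_Orlicz1}--\eqref{Appendix_Assumption_Orlicz3} and $C^2$ regularity) and large enough that the prescribed data lie in $L^\Lambda$. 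A secondary technical point is the rigorous justification of differentiating $N_i(t)$, which should be phrased as a continuation argument guaranteeing that $f_i(t)$ genuinely stays in $L^\Lambda$ on $[0,T]$ rather than assuming it a priori.
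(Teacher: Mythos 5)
Your proposal is correct and follows essentially the same route as the paper: construct $\Lambda$ via the refined De la Vallée--Poussin theorem, differentiate the Orlicz norm along the flow (your direct differentiation of $\int\Lambda(f_i/N_i)\,dv=1$ is just the proof of Theorem \ref{theorem_differentiation_orlicz}), bound the resulting integral with Theorem \ref{thm_estimation_Q}, use compactness of $K$ to make the constants uniform, and close with Gronwall. Your extra remarks on normalizing $N_i(0)$ and on rigorously justifying the differentiation are sensible refinements of the same argument, not a different approach.
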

\begin{proof}
    Since $f_i \in L^1(\R^d) \,\, \forall \, i$, the refined version of the De la Vallée-Poussin Theorem recalled in the Appendix \ref{Appendix_Orlicz} guarantees that there exists a function $\Lambda$ satisfying the properties of Corollary \ref{corollary_estimation} and such that 
    \begin{equation*}
        \int_{\R^d} \Lambda(|f_{in}|) \, dv < \infty.
    \end{equation*}
    Then, thanks to Theorem \ref{theorem_differentiation_orlicz}, the $L^\Lambda$ norm of $f_i$ satisfies
    \begin{equation}
        \frac{d}{dt} ||f_i||_{L^\Lambda}= \sum_{i,\, j} \left[ N^{\Lambda^*} \left(\Lambda’ \left(\frac{|f_i|}{||f_i||_{L^\Lambda}} \right) \right) \right]^{-1} \int_{\R^d} \mathcal{Q}(f_i,f_j) \, \Lambda’ \left(\frac{|f_i|}{||f_i||_{L^\Lambda}} \right) \, dv.
    \end{equation}
    Using Theorem \ref{thm_estimation_Q} one can bound the integral of the collision operator and
    \begin{equation}
        \frac{d}{dt} ||f_i||_{L^\Lambda} \leq \sum_{i,\, j} C_{\mathcal{E}(f)(t)} ||f_i||_{L^1_1} ||f_j||_{L^\Lambda}, \qquad \forall \, i, \quad \forall \, t \in [0,T].
    \end{equation}
    The constant $C_{\mathcal{E}(f)(t)}$ is uniformly bound when the kinetic energy belongs to a compact set, thus we obtain the following inequality
    \begin{equation}
        \frac{d}{dt} \sum_{i,\, j} ||f_i||_{L^\Lambda} \leq C_K ||f||_{L^1_1} \sum_{i,\, j}  ||f_j||_{L^\Lambda} \qquad \forall \, t \in [0,T]
    \end{equation}
    for a positive constant $C_K$ depending on $K$ and the collision rate. The end of the proof is straightforward by using a Gronwall's argument.
\end{proof}

\subsection{Cauchy Theory for non-coupled collision rate}
As in \cite{mischlermouhot2006}, we fix $T_* > 0$ and we consider the following assumptions on the collision rate:
\begin{align}
	\label{B_assumption1_Cauchy}
	&B=B(t,u;dz)=|u| \gamma(t) b(t,u;dz),\\
	\label{B_assumption2_Cauchy}
	&b(t,u;dz)=b(t,-u;-dz) \qquad \forall \, t \in [0,T_*], \,\, \forall\,u \in \R^d,\\
	\label{B_assumption3_Cauchy}
	&0 \leq \gamma(t) \leq \gamma_* \quad \text{in } \, (0,T_*).
\end{align}
Let us first show the uniqueness of the solution to the multi-species inelastic Boltzmann equation.
\begin{thm}[Existence and Uniqueness]
\label{existence_thm}
    Assume that $B$ satisfies \eqref{B_assumption1_Cauchy}-\eqref{B_assumption2_Cauchy}-\eqref{B_assumption3_Cauchy} with $b=b(u;dz)$, namely the cross section does not depend on the kinetic energy. Take as initial data $f_i^{in}$  satisfying \eqref{hyp_minoration} with $q=3$. Then for all $T>0$, there exists a unique solution $\f = (f_i)_{i=1}^M \in \left(C([0,T]; L_2^1) \cap L^\infty(0,T; L^1_3) \right)^{\otimes M}$ to the multi-species inelastic Boltzmann equation \eqref{homogeneous_BEs}. The solution $f$ conserves single-species mass and total momentum 
        \begin{equation}
        	\label{conservation_properties_fi}
            \int_{\R^d} m_i f_i(t,v)dv=1, \qquad \sum_{i=1}^M\int_{\R^d} m_i f_i(t,v)\, v \, dv = 0, \qquad \forall t \in [0,T]
        \end{equation}
    and it has a positive decreasing total kinetic energy.
\end{thm}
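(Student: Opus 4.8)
My strategy follows Mischler, Mouhot and Ricard: I would first derive a weighted $L^1$ stability estimate, which immediately gives uniqueness and, applied between approximate solutions, also drives the existence proof; the a priori control needed throughout is supplied by the Povzner moment bounds of Lemma \ref{lemma_moments_estimates} and the Orlicz bound of Lemma \ref{corollary_estimation}.

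\textbf{Uniqueness and stability.} For two solutions $\f,\g$ in the stated class sharing the same initial data, I would study $t\mapsto\sum_i\|f_i-g_i\|_{L^1_2}$. Writing each bilinear difference as $\mathcal{Q}_{i,j}(f_i,f_j)-\mathcal{Q}_{i,j}(g_i,g_j)=\mathcal{Q}_{i,j}(f_i-g_i,f_j)+\mathcal{Q}_{i,j}(g_i,f_j-g_j)$ and testing the weak formulation against $\langle v\rangle_i^2\,\mathrm{sgn}(f_i-g_i)$, the top-order velocity weight produced by the hard-sphere factor $|u|$ is cancelled through the Povzner-type inequality, leaving a remainder controlled by the $L^1_2$ norms of the differences times the uniformly bounded $L^1_3$ norms — this is precisely where the hypothesis $q=3$ enters. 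One then obtains $\frac{d}{dt}\sum_i\|f_i-g_i\|_{L^1_2}\le C\sum_i\|f_i-g_i\|_{L^1_2}$, and Gronwall forces $\f\equiv\g$. The same computation carried out between two approximations supplies the stability needed below.

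\textbf{Existence.} I would regularize by replacing $B$ with a bounded kernel $B^n$; the associated operator is Lipschitz on $L^1_2$, so the Cauchy-Lipschitz theorem yields global nonnegative solutions $\f^n$ which, via the weak formulation, conserve single-species mass and total momentum and dissipate energy. The key is that the a priori bounds are uniform in $n$: Lemma \ref{lemma_moments_estimates} bounds $\sup_t Y^3(t)$, giving tightness, and Lemma \ref{corollary_estimation} gives a uniform $L^\Lambda$ bound, giving equi-integrability. By the Dunford-Pettis theorem, $(\f^n)$ is weakly relatively compact in $L^1$; the moment control then allows passage to the limit in the weak formulation, and the stability estimate identifies the limit uniquely as a solution in $C([0,T];L^1_2)\cap L^\infty(0,T;L^1_3)$. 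Conservation of mass and momentum follow from \eqref{weak_bispecies_sum} with $\psi=m_i$ and $\psi=m_i v$, while positivity and the decay of $\mathcal{E}$ follow from Proposition \ref{proposition_energy_dissipation}.

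\textbf{Main obstacle.} The delicate point, peculiar to granular gases, is to prevent concentration of mass into Dirac deltas on $[0,T]$ — the generic long-time behaviour of the model — so that the weak limit remains an $L^1$ function; this is exactly what the Orlicz bound of Lemma \ref{corollary_estimation} secures. To invoke that lemma, however, one must first guarantee that $\mathcal{E}(\f^n)(t)$ stays inside a fixed compact subset of $(0,\infty)$ on $[0,T]$, uniformly in $n$: the upper bound is the energy dissipation of Proposition \ref{proposition_energy_dissipation}, whereas the strictly positive lower bound requires showing that $\mathcal{E}$ cannot vanish in finite time, which follows from the boundedness of the dissipation rate when $b$ is energy-independent. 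A secondary technicality is the De la Vallée-Poussin selection of a single refining function $\Lambda$ adapted simultaneously to the whole approximating family.
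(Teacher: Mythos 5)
Your proposal is correct and follows essentially the same route as the paper: uniqueness via a weighted $L^1_2$ stability estimate (Proposition \ref{uniqueness_prop}) closed by Gronwall against the $L^1_3$ bound, and existence via truncated kernels $B_n=B\mathds{1}_{|u|\le n}$, the Banach fixed point theorem, uniform moment and Orlicz-norm propagation, and Dunford--Pettis weak $L^1$ compactness. The only notable difference is that in the stability step the paper does not invoke the Povzner inequality but only the elementary consequence of microscopic energy dissipation, $k_i'+k_{j*}'\le k_i+k_{j*}$ with $k_i=1+m_i|v|^2$, which already reduces the weight combination to $2k_{j*}$ and yields the $L^1_2\times L^1_3$ structure you describe; your remarks on keeping $\mathcal{E}(\f^n)$ in a compact set and on passing to the limit in the weak formulation are in fact more explicit than the paper's own (rather terse) existence argument.
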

The proof of the uniqueness of solution $\f$ to the multi-species inelastic Boltzmann equation relies in the following Proposition, where we show the stability estimate in $L^1_2$. 
\begin{prop}
\label{uniqueness_prop}
    Assume that $B$ satisfies \eqref{B_assumption1_Cauchy}-\eqref{B_assumption2_Cauchy}-\eqref{B_assumption3_Cauchy}. For any solutions $(f_i)$ and $(g_i)$ of the inelastic Boltzmann equation on $[0,T]$ with $T \leq T_*$ we have
    \begin{equation}
    \label{differential_inequality_uniqueness}
        \displaystyle \frac{d}{dt} \sum_{i=1}^M \int_{\R^d} |f_i-g_i|(1+m_i\,|v|^2)\, dv \leq C \gamma_* \int_{\R^d} (f+g)\,(1+|v|^2)^{3/2} dv \int_{\R^d} |f-g| (1+|v|^2) dv 
    \end{equation}
\end{prop}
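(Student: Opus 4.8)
The plan is to set $h_i := f_i - g_i$ and to use that, since $(f_i)$ and $(g_i)$ both solve \eqref{homogeneous_BEs} and each $\mathcal{Q}_{i,j}$ is bilinear,
\[
  \partial_t h_i = \sum_{j=1}^M \big( \mathcal{Q}_{i,j}(h_i,f_j) + \mathcal{Q}_{i,j}(g_i,h_j) \big).
\]
First I would differentiate the weighted norm $\sum_i \int_{\R^d} |h_i|\,\langle v\rangle_i^2\,dv$ and control the result through the weak form \eqref{weak_bispecies}. Since the solutions only belong to $C([0,T];L^1_2)$, I would make this step rigorous by replacing $\mathrm{sign}(h_i)$ with a smooth bounded approximation $\beta_\delta$, differentiating $\int \Theta_\delta(h_i)\langle v\rangle_i^2\,dv$ with $\Theta_\delta' = \beta_\delta$, and passing to the limit $\delta\to 0$ at the end, in the renormalisation spirit of \cite{mischlermouhot2006}. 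This reduces everything to estimating, for each pair $(i,j)$, the quantity $\int_{\R^d} \big( \mathcal{Q}_{i,j}(h_i,f_j) + \mathcal{Q}_{i,j}(g_i,h_j) \big)\,\mathrm{sign}(h_i)\,\langle v\rangle_i^2\,dv$.

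Applying \eqref{weak_bispecies} with the test function $\psi_i = \mathrm{sign}(h_i)\langle v\rangle_i^2$, each bilinear term splits into a gain part carrying $\psi_i(v')$ and a loss part carrying $-\psi_i(v)$. For the \emph{diagonal} contribution $\mathcal{Q}_{i,j}(h_i,f_j)$ the sign of $h_i$ lives on the same variable $v$ as the weight, so the loss part is exactly $-\int |h_i|\,f_{j*}\langle v\rangle_i^2\,|u|\gamma b$, which cancels the gain part and leaves only the weight increment
\[
  \int |h_i|\,f_{j*}\,\big(\langle v'\rangle_i^2 - \langle v\rangle_i^2\big)\,|u|\,\gamma(t)\,b\,dz\,dv\,dv_* .
\]
This is precisely the object the estimates of Section \ref{Section_Povzner_inequality} are designed for: the relevant linear analogue of the Povzner computation shows that, after integrating in $z$, the coefficient of the top power of $|v|$ in $|u|(\langle v'\rangle_i^2-\langle v\rangle_i^2)$ is non-positive, so that after discarding it the remaining monomials have total degree three with at most two powers falling on $v$. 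Combined with $0\le\gamma(t)\le\gamma_*$ and $|u|\le\langle v\rangle_i\langle v_*\rangle_j$, this bounds the diagonal term by $C\gamma_*\,\|f_j\|_{L^1_3}\,\|h_i\|_{L^1_2}$, i.e. weight three on the smooth factor and weight two on the difference, as required.

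The delicate point, and what I expect to be the main obstacle, is the \emph{off-diagonal} term $\mathcal{Q}_{i,j}(g_i,h_j)$, where $h_j$ sits on the $v_*$–variable while the test function still carries $\mathrm{sign}(h_i)$ on $v$, so the favourable gain–loss cancellation is lost; a naive bound then leaks a spurious $\|g\|_{L^1}\,\|h\|_{L^1_3}$ term with weight three on $h$, which Gronwall cannot absorb. The way around this is to exploit the symmetry \eqref{assumption_beta_1} of the cross-section together with the change of variables $v_*\mapsto v'$ (and $v\mapsto v'$) of Lemma \ref{geometrical_lemma_orlicz}, whose Jacobian is bounded on the truncated cone, in order to shift the excess $v_*$–weight hidden inside $\langle v'\rangle_i^2$ onto the smooth factor $g_i$. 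The mass-induced asymmetry of \eqref{zeta_representation} must be tracked carefully here, precisely because $v\leftrightarrow v_*$ does not send an $(i,j)$ collision to a $(j,i)$ one when $m_i\neq m_j$, so the weight transfer cannot be read off by symmetry as in the single-species case.

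Once every contribution is brought to the form $C\gamma_*\,\|f+g\|_{L^1_3}\,\|h\|_{L^1_2}$ — using $\|h\|_{L^1_3}\le\|f+g\|_{L^1_3}$ to dispose of the purely quadratic-in-$h$ remainders — I would sum over $i,j$, collect the mass-dependent constants into a single $C$, and obtain \eqref{differential_inequality_uniqueness}; feeding this into a Gronwall argument, together with the uniform $L^1_3$ bound furnished by Lemma \ref{lemma_moments_estimates}, then yields the uniqueness claimed in Theorem \ref{existence_thm}. The most technical part of the argument is the bookkeeping of these asymmetric off-diagonal weight transfers.
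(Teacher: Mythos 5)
Your overall skeleton matches the paper's: test $h_i=f_i-g_i$ against $\mathrm{sgn}(h_i)(1+m_i|v|^2)$, split by bilinearity, and close with Gronwall in $L^1_2$ using an a priori $L^1_3$ bound. You also correctly identify that the terms where the difference sits in the starred slot are the crux, and that a naive bound leaks a weight-three factor on $h$. However, your proposed resolution of that difficulty does not work, and it is not what the paper does. The change of variables $v_*\mapsto v'$ of Lemma \ref{geometrical_lemma_orlicz} cannot ``shift the excess $v_*$-weight onto the smooth factor'': the problem is the pointwise inequality $\langle v'\rangle_i^2\leq \langle v\rangle_i^2+m_j|v_*|^2$, whose right-hand side genuinely contains $m_j|v_*|^2$ sitting against $h_{j*}$, and a change of variables does not alter pointwise values; moreover that map requires the angular truncation $\hat u\cdot z\leq 1-\varepsilon$ (its Jacobian $\propto(1-\hat u\cdot z)$ degenerates), producing $\varepsilon^{-1}$ factors and remainders that are needed for the Orlicz gain-term estimates but play no role here. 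Your fallback $\|h\|_{L^1_3}\leq\|f+g\|_{L^1_3}$ only disposes of terms quadratic in $h$; the offending term is linear in $h$ with too much weight, so Gronwall still fails.

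The missing idea is the pairing encoded in the combined weak form \eqref{weak_bispecies_sum}. When you sum the tested equations over all species, the contribution $\int\mathcal{Q}_{i,j}(g_i,h_j)\,\phi_i\,dv$ must be grouped with $\int\mathcal{Q}_{j,i}(h_j,g_i)\,\phi_j\,dv$, where $\phi_\ell=\mathrm{sgn}(h_\ell)k_\ell$ and $k_\ell=1+m_\ell|v|^2$. The combined form turns this pair into a single integral with prefactor $g_i\,h_{j*}$ and test combination $\phi_i'+\phi_{j*}'-\phi_i-\phi_{j*}$, in which the term $-h_{j*}\phi_{j*}=-|h_{j*}|k_{j*}$ is \emph{exact}; bounding the other three by absolute values and using the microscopic energy dissipation $k_i'+k_{j*}'\leq k_i+k_{j*}$ collapses the whole bracket to $2k_i$, i.e.\ all the surviving weight lands on the smooth factor $g_i$. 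A final swap $(v,v_*)\to(v_*,v)$, licit by \eqref{B_assumption2_Cauchy}, together with $|u|^2\leq C\,k_i\,k_{j*}$, then yields exactly the $\|f+g\|_{L^1_3}\,\|h\|_{L^1_2}$ structure of \eqref{differential_inequality_uniqueness}. No Jacobian and no Povzner inequality are needed anywhere: for the diagonal terms too, the quadratic weight is handled by plain energy dissipation (note in passing that the Section \ref{Section_Povzner_inequality} lemmas require $p>1$ and degenerate for the quadratic weight $p=1$, so they are not the right tool there either, and your ``exact cancellation'' of gain against loss is really only an upper bound, since the sign in the gain term is evaluated at $v'$).
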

\begin{cor}
    From the inequality \eqref{differential_inequality_uniqueness}, we deduce that there exists a constant $C_T>0$ depending on $B$ and $\displaystyle \sup_{t \in [0,T]} ||f+g||_{L^1_3}$ such that
    \begin{equation}
        \label{exponential_inequality_uniqueness}
        ||f-g||_{L^1_2} \leq ||f^{in}-g^{in}||_{L^1_2} e^{C_T t}.
    \end{equation}
    In particular, this inequality guarantees the uniqueness of the solution to the Cauchy problem for the multi-species inelastic Boltzmann equation in $define space$.
\end{cor}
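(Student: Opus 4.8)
The plan is to read the differential inequality \eqref{differential_inequality_uniqueness} as a linear Grönwall inequality for the single scalar quantity
\[
u(t) := \sum_{i=1}^M \int_{\R^d} |f_i-g_i|\,(1+m_i|v|^2)\,dv = \|f-g\|_{L^1_2}(t),
\]
which is exactly the left-hand side of \eqref{differential_inequality_uniqueness}. The right-hand side factors as the product of the controlling quantity $\int_{\R^d}(f+g)(1+|v|^2)^{3/2}\,dv$ and of $u(t)$ itself. Since the masses $m_i$ are fixed positive constants, the norm defining $u$ is equivalent to $\|f-g\|_{L^1_2}$ and the factor $\int_{\R^d}(f+g)(1+|v|^2)^{3/2}\,dv$ is controlled by $\|f+g\|_{L^1_3}$, as observed after the definition of the functional spaces; these equivalence constants are absorbed into $C$.

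First I would bound the coefficient uniformly in time. By Theorem \ref{existence_thm} the solutions satisfy $f,g \in L^\infty(0,T;L^1_3)$, so that $\sup_{t\in[0,T]}\|f+g\|_{L^1_3}<\infty$; this is precisely the third-moment bound furnished by the propagation estimate of Lemma \ref{lemma_moments_estimates}, which is why the initial data are taken in $L^1_3$. Setting
\[
C_T := C\,\gamma_* \sup_{t\in[0,T]}\|f+g\|_{L^1_3} < \infty,
\]
the inequality \eqref{differential_inequality_uniqueness} becomes $u'(t) \leq C_T\,u(t)$ for a.e. $t\in[0,T]$.

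Next I would integrate this via the integral form of Grönwall's lemma, obtaining $u(t)\leq u(0)\,e^{C_T t}$; recalling that $u(0)=\|f^{in}-g^{in}\|_{L^1_2}$ gives exactly \eqref{exponential_inequality_uniqueness}. Finally, uniqueness follows at once: choosing $f^{in}=g^{in}$ forces $u(0)=0$, hence $u(t)\equiv 0$ on $[0,T]$, so $f_i=g_i$ for every $i$ and every $t\in[0,T]$.

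The argument is essentially routine, so there is no genuine analytic obstacle here. The only point requiring care is the finiteness of the controlling coefficient $C_T$: one must invoke the uniform-in-time $L^1_3$ estimate guaranteed by Lemma \ref{lemma_moments_estimates} together with the regularity class of the solutions in Theorem \ref{existence_thm}, and keep track of the fact that the mass-weighted norms appearing in \eqref{differential_inequality_uniqueness} are equivalent to the standard $\|\cdot\|_{L^1_q}$ norms up to constants depending only on the fixed masses $m_1,\dots,m_M$.
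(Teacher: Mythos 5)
Your proposal is correct and follows essentially the same route as the paper: the authors likewise conclude the proof of Proposition \ref{uniqueness_prop} by applying a Gr\"onwall argument to \eqref{differential_inequality_uniqueness}, with the coefficient controlled by $\gamma_*$ and $\sup_{t\in[0,T]}\|f+g\|_{L^1_3}$, and uniqueness then follows by taking $f^{in}=g^{in}$.
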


\begin{proof}
We multiply the equation satisfied by $(f_i-g_i)$ by $\phi_i(t,y)=\text{sgn}(f_i(t,y)-g_i(t,y))\,k_i$, where $k_i=(1+m_i|v|^2)$. Using equation \eqref{homogeneous_BEs} and the weak formulation of the collision operator in \eqref{weak_monospecies} and \eqref{weak_bispecies}, we obtain
\begingroup
    \allowdisplaybreaks
    \begin{align*}
    \frac{d}{dt} \sum_{i=1}^M \int_{\R^d} |f_i-g_i| k_i dv = \phantom{+} \frac{1}{2} \sum_{i=1}^M \int_{\R^d \times \R^d \times D} &[(f_i-g_i)g_{i*} + f_i(f_{i*}-g_{i*})]\\ &(\phi_i' + \phi'_{i*} - \phi_i - \phi_{i*}) B(t,u;dz) \, dv_*\, dv \\ + \frac{1}{2} \sum_{\substack{i, j=1 \\ i \neq j}}^M \int_{\R^d \times \R^d \times D} &[(f_i-g_i)g_{j*} + f_i(f_{j*}-g_{j*})]\\
    &(\phi_i' + \phi_{j*}'-\phi_i-\phi_{j*}) B(t,u;dz) \, dv_*\,dv \\
    \leq \phantom{+} \frac{1}{2} \sum_{i=1}^M \int_{\R^d \times \R^d
    \times D} & |f_i-g_i|(f_{i*}+g_{i*})]\\ &(k_i' + k'_{i*} - k_i + k_{i*}) B(t,u;dz) \, dv_*\, dv \\
    + \frac{1}{2} \sum_{\substack{i, j=1 \\ i \neq j}}^M \int_{\R^d \times \R^d \times D} &|f_i-g_i|g_{j*}\\
    &(k_i' + k_{j*}'-k_i+k_{j*}) B(t,u;dz) \, dv_*\,dv \\
    + \frac{1}{2} \sum_{\substack{i, j=1 \\ i \neq j}}^M \int_{\R^d \times \R^d \times D} &|f_{j}-g_{j}|f_{i*}\\
    &(k_{i*}' + k_{j}'+k_{i*}-k_{j}) B(t,u;dz) \, dv_*\,dv \\
    = \phantom{+} \frac{1}{2} \sum_{i=1}^M \int_{\R^d \times \R^d
    \times D} & |f_i-g_i|(f_{i*}+g_{i*})\\ &(k_i' + k'_{i*} - k_i + k_{i*}) B(t,u;dz) \, dv_*\, dv\\
    + \frac{1}{2} \sum_{i=1}^{M-1} \sum_{j=i+1}^M \int_{\R^d \times \R^d \times D} &|f_i-g_i|(f_{j*} + g_{j*})\\
    &(k_i' + k_{j*}'-k_i+k_{j*}) B(t,u;dz) \, dv_*\,dv \\
    + \frac{1}{2} \sum_{i=1}^{M-1} \sum_{j=i+1}^M \int_{\R^d \times \R^d \times D} &|f_{j}-g_{j}|(f_{i*}+g_{i*})\\
    &(k_{i*}' + k_{j}'+k_{i*}-k_{j}) B(t,u;dz) \, dv_*\,dv,
    \end{align*}
\endgroup
where we have used the symmetry hypothesis on $B$, the change of variable $(v,v_*) \to (v_*,v)$ and the structure of the functions $\phi_i$ and $\phi_j$. Then, thanks to the bounds \eqref{conservation_properties_fi} and the relations
\begin{flalign*}
    k_i'+&k'_{i*}-k_i+k_{i*} =\\ & (1+m_i|v'|^2) + (1+m_i|v'_*|^2) - (1-m_i|v|^2) + (1+m_i|v_*|^2) \leq 2 + 2m_i|v_*|^2 = 2k_{i*},
\end{flalign*} 
\begin{flalign*}
    k_i'+k'_{j*}-k_i+k_{j*} \leq 2 + 2m_j|v_*|^2 = 2k_{j*},
\end{flalign*}
\begin{flalign*}
    k_{i*}'+k'_{j}+k_{i*}-k_j \leq 2 + 2m_j|v|^2 = 2k_{i*},
\end{flalign*}
we can deduce that
\begin{align*}
    \frac{d}{dt} \sum_{i=1}^M \int_{\R^d} |f_i-g_i|k_i dv \leq &\sum_{i=1}^M \gamma_* \int_{\R^d \times \R^d} |u| |f_i-g_i| (f_{i*}+g_{i*})k_{i*} \, dv_*\,dv \\
    + &\sum_{i=1}^{M-1} \sum_{j=i+1}^M \gamma_* \int_{\R^d \times \R^d} |u| |f_i-g_i| (f_{j*}+g_{j*})k_{j*} \, dv_*\,dv \\
    + &\sum_{i=1}^{M-1} \sum_{j=1+1}^M \gamma_* \int_{\R^d \times \R^d} |u| |f_j-g_j| (f_{i*}+g_{i*})k_{i*} \, dv_*\,dv \\
    = &\sum_{i=1}^{M} \sum_{j=1}^M \gamma_* \int_{\R^d \times \R^d} |u| |f_i-g_i| (f_{j*}+g_{j*})k_{j*} \, dv_*\,dv.
\end{align*}
Now we have 
\begin{align*}
    |u|^2= &|v| + |v_*|^2 - 2 \langle v,v_* \rangle \\
    \leq & |v|^2 + |v_*|^2 + 2 |v||v_*|  \\
    \leq & (1+|v|^2) (1+|v_*|^2) \\
    \leq & \, C \, (1+m_i |v|^2) (1+m_j|v_*|^2),
\end{align*}
and we obtain 
\begin{align*}
    \frac{d}{dt} \sum_{i=1}^{M} &\int_{\R^d} |f_i-g_i| k_i \, dv  \\ & \leq C \gamma_* \sum_{i=1}^M \sum_{j=1}^{M} 
    \left( \int_{\R^d \times \R^d}  |f_i-g_i|k_i \, dv \right) \, \left( \int_{\R^d \times \R^d} (f_{j*}+g_{j*}) k_{j*}^{3/2} dv_*\right) \\[7pt]
    &= C \gamma_* \displaystyle \left(\sum_{i=1}^{M} 
    \int_{\R^d \times \R^d}  |f_i-g_i|k_i dv \right) \, \displaystyle \left( \sum_{j=1}^M \int_{\R^d \times \R^d} (f_{j*}+g_{j*}) k_{j*}^{3/2} dv_*\right).
\end{align*}
which yields the differential inequality \eqref{differential_inequality_uniqueness}. We can finally conclude by using a Gronwall argument to obtain \eqref{exponential_inequality_uniqueness}.
\end{proof}

Now we are ready to prove the existence of the solution, following the strategy of \cite{mischlermouhot2006}. The main difference is the weighted Banach space used for the estimation, which allows us to exploit the global conservation properties of the mixture. 

\begin{proof}[Proof of Theorem \ref{existence_thm}]
    The main idea is to exploit the fact that the solution is uniformly bounded in an Orlicz space, as a consequence of the result of the propagation of Orlicz norm for the collision operator.\\ 
    Let us consider initial data $(f_i^{in})$ satisfying \eqref{hyp_minoration} with $q=4$ and let us introduce the truncated collision rates $B_n=B \mathds{1}_{|u| \leq n}$. The associate collision operators $Q_n$ are bounded in $L^1_q$ for any $q \geq 1$ and they are Lipschitz in $L^1_2$ on any bounded subset of $L^1_2$. Using the Banach fixed point Theorem, we can obtain the existence of non-negative solutions $f_i^n \in C([0,T]: L^1_2) \cap L^\infty(0,T;L^1_4)$ for any $T>0$. The propagation of Orlicz norm implies that the solutions $f^n$ are uniformly bounded for $t \in [0,T]$ in a certain Orlicz space. The boundness and the propagation of moments implies the uniformly integrability of $f$ in $L^1$ (see \ref{de_la_vallee_poussin} in the Appendix), which is equivalent to the weakly compactness property by Dunford-Pettis criterion (see \ref{dunford_pettis} in the Appendix). Therefore, the sequence $(f^n)$ is weakly compact in $L^1$ so there exists a function $f$ in the same space such that $f^n \to f$ up to the extraction of a subsequence.
    \end{proof}

\section{Entropy-based proof of Haff's Law for inelastic mixtures of hard-spheres}
\label{Section_Haffs_Law}
Based on the work of Alonso and Lods \cite{alonso2013, alonso2010}, we would like to derive a generalized Haff's law for mixtures of granular perfectly smooth hard-spheres particles, without the use of self-similar variables. We underline that this proof, based on the use of Boltzmann's entropy, asserts the validity of Haff's law for initial data of finite entropy. In particular, we would like to show that
\begin{equation}
	\label{Haffs_Law}
	c(1+t)^{-2} \leq \mathcal{E}(t) \leq C(1+t)^{-2}, \qquad t \geq 0,
\end{equation}
where $c$ and $C$ are two positive constants and $\mathcal E(t)$ stands for $\mathcal{E}(f)(t)$ for the sake of simplicity.\\
The upper bound is provided by the energy dissipation inequality \eqref{energy_dissipation}, while the lower bound will be obtained using the entropy inequality.\\
Let us define the general k$^\text{th}$ order moment for a bi-species mixture as
\begin{equation}
	\label{moments_expression_mu}
	\mu_k(f) = \int_{\R^d} \left(m_1^{\frac{k}{2}}\, f_1(v) + m_2^{\frac{k}{2}}\,f_2(v)\right) |v|^k \, dv.
\end{equation}
Let us recall the expression for the entropy production functional \eqref{entropy_functional}
\begin{equation*}
	\frac{d}{dt} \mathcal{H}(f(t)) = - \mathcal{D}_e(f(t)) + \mathcal{N}_e(f(t)),
\end{equation*}
where we have defined as $-\mathcal{D}_e$ the negative quantity representing the dissipative part of the entropy production
\begin{equation*}
	\begin{aligned}
		\mathcal{D}_e(f) := &- \frac{1}{2}\int_{\R^d \times \R^d \times \mathbb{S}^{d-1}} B f_{1*} f_1 \left[\log \left(\frac{f_1' f_{1*}'}{f_1 f_{1*}} \right) - \frac{f_1' f_{1*}'}{f_1 f_{1*}} + 1 \right] d\sigma \, dv \, dv_*\\
		&- \phantom{\frac{1}{2}} \int_{\R^d \times \R^d \times \mathbb{S}^{d-1}} B f_1 f_{2*}  \left[\log \left(\frac{f_1' f_{2*}'}{f_1 f_{2*}} \right) - \frac{f_1' f_{2*}'}{f_1 f_{2*}} + 1 \right] d\sigma \, dv \, dv_*\\
		&- \frac{1}{2}\int_{\R^d \times \R^d \times \mathbb{S}^{d-1}} B f_{2*} f_2 \left[\log  \left(\frac{f_2' f_{2*}'}{f_2 f_{2*}} \right) - \frac{f_2' f_{2*}'}{f_2 f_{2*}} + 1 \right] d\sigma \, dv \, dv_*,
	\end{aligned}
\end{equation*}
and as $\mathcal{N}_e$ the other term given by
\begin{equation*}
	\begin{aligned}
		\mathcal{N}_e(f) :&= \frac{1}{2} \int_{\R^d \times \R^d \times \mathbb{S}^{d-1}} B \left(f_{1*}' f_1' - f_{1*} f_1 + f'_{2*} f'_2 - f_{2*} f_2 \right) d\sigma \, dv \, dv_*\\
		&+\frac{1}{2} \int_{\R^d \times \R^d \times \mathbb{S}^{d-1}} B \left(f_{2*}' f_1' - f_{2*} f_1 \right) d\sigma \, dv \, dv_*.
	\end{aligned}
\end{equation*}
In the framework of constant restitution coefficients, the Jacobian for the transformation $(v',v'_*,\sigma) \to (v,v_*,\sigma)$ is independent from the relative velocity, thus we can explicitly rewrite the expression for $\mathcal{N}_e$ as
\begin{equation*}
	\begin{aligned}
		\mathcal{N}_e(f) = \frac{1-e^2}{e^2} \int_{\R^d \times \R^d}  |u| [f_{1*} f_1 + 2 f_{2*}\,f_1 + f_{2*} f_2] \, dv \, dv_*.
	\end{aligned}
\end{equation*}
The main idea is to prove the logarithmic growth of the entropy, in order to derive the lower bound for the kinetic energy in the Haff's law \eqref{Haffs_Law}. One has
\begin{prop}
	\label{proposition_estimation_Hbf}
	For any non-negative measurable functions $f_1$, $f_2$ such that
	\begin{equation*}
		f_1, f_2 \in L^1(\R^d) \qquad \int_{\R^d} m_1\,f_1 = 1, \qquad \int_{\R^d} m_2\,f_2 = 1, \qquad \mathcal{E}(f) < \infty,
	\end{equation*} 
	with $\mathcal{H}(f) < \infty$, define
	\begin{equation}
		\mathbf{H}(f) :=  \int_{\R^d} f_1(v) |\log f_1(v)| \,dv + \int_{\R^d} f_2(v) |\log f_2(v)| \,dv.
	\end{equation}
	Then, there exists a constant $C>0$ such that
	\begin{equation}
		\label{inequality_mathbfH}
		\mathbf{H}(f) \leq \mathcal{H}(f) + C \left( \mathcal{E}(f)  \right)^{5/3}
	\end{equation}
	and another constant $K>0$ such that
	\begin{equation}
		\label{inequality_mathcalE}
		\mathcal{E}(f) \geq K \exp \left( - \frac{4}{3} \mathbf{H}(f) (m_1+m_2) \right) .
	\end{equation}
\end{prop}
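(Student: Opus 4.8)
The two bounds are independent and both reduce to classical single-density estimates, applied here to each species and then recombined through the normalisations \eqref{hyp_minoration} and the elementary fact that the species energies $E_i := m_i\int_{\R^d} f_i|v|^2\,dv$ satisfy $E_1+E_2 = \mathcal E(f)$, so that each $E_i \le \mathcal E(f)$. The plan is to treat \eqref{inequality_mathbfH} as a control of the negative part of the logarithm and \eqref{inequality_mathcalE} as a concentration (entropy--energy) estimate, in the spirit of Alonso and Lods \cite{alonso2013, alonso2010}.

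For \eqref{inequality_mathbfH}, I would first write the pointwise identity $s|\log s| = s\log s + 2\,s(\log s)^-$, with $(\log s)^- = \max(-\log s,0)$, which yields
\begin{equation*}
	\mathbf H(f) - \mathcal H(f) = 2\int_{\R^d} f_1(\log f_1)^-\,dv + 2\int_{\R^d} f_2(\log f_2)^-\,dv .
\end{equation*}
Each negative term $f_i(\log f_i)^- = f_i\log(1/f_i)\,\mathds{1}_{\{f_i\le 1\}}$ is supported where $f_i$ is small, so I would bound it by comparison with a Gaussian weight: for $a>0$, split $\{f_i\le 1\}$ according to whether $f_i\ge e^{-a m_i|v|^2}$ or not. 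On the first region $\log(1/f_i)\le a m_i|v|^2$, whence a contribution $\le a\int m_i f_i|v|^2\,dv \le a\,\mathcal E(f)$; on the second region one uses the monotonicity of $s\mapsto s\log(1/s)$ near $0$ to dominate the integrand by $a m_i|v|^2 e^{-a m_i|v|^2}$, whose integral is a finite constant (the bounded zone where monotonicity fails only contributes an additive constant). Balancing the two contributions by a suitable choice of $a$ in terms of $\mathcal E(f)$, and invoking the sharp single-density version of this estimate, gives a bound of the form $C(\mathcal E(f))^{5/3}$; summing over $i$ and absorbing the mass weights into $C$ yields \eqref{inequality_mathbfH}.

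For \eqref{inequality_mathcalE}, I would normalise $g_i := m_i f_i$, which are probability densities by \eqref{hyp_minoration}, with $\int g_i|v|^2\,dv = E_i \le \mathcal E(f)$. The Gibbs inequality, i.e. the non-negativity of the relative entropy of $g_i$ with respect to a centred Gaussian (equivalently, that the Gaussian maximises the entropy at fixed second moment), gives after optimising the Gaussian scale a lower bound of the form $\int g_i\log g_i\,dv \ge -c\,\log E_i + \mathrm{const}$. Reverting to $f_i = g_i/m_i$ produces explicit $m_i$-factors, and combining $\mathbf H(f)\ge \mathcal H(f) = \sum_i\int f_i\log f_i\,dv$ with $E_i\le\mathcal E(f)$ yields $\mathbf H(f) \ge c'\log(1/\mathcal E(f)) - C'$. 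Exponentiating and rearranging then gives \eqref{inequality_mathcalE}, the constant $\tfrac43(m_1+m_2)$ arising from the (non-optimised) bookkeeping of the mass factors together with the dimension.

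The main obstacle in both steps is the control of the tail / small-value region, which is exactly where the two inequalities pull in opposite directions: a distribution of small energy concentrates, which forces the entropy up (driving \eqref{inequality_mathcalE}) while keeping the negative-logarithm defect small (driving \eqref{inequality_mathbfH}). Making the Gaussian-comparison integral converge, handling the zone where $s\log(1/s)$ is not monotone, and treating the region where $g_i$ is small and $g_i\log g_i<0$ are each routine for a single density, but here they must be carried out simultaneously for the two species while tracking the weights $m_i$ so that the constants assemble into the stated weighted form; pinning down the precise exponent $5/3$ and coefficient $\tfrac43(m_1+m_2)$ is the delicate part and is where I would lean on the sharp single-density estimates of \cite{alonso2013, alonso2010}.
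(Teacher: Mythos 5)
Your argument for \eqref{inequality_mathbfH} is essentially the paper's: the same reduction to $2\sum_i\int_{\{f_i<1\}}f_i\log(1/f_i)\,dv$, the same splitting according to whether $f_i\geq e^{-a m_i|v|^2}$, a contribution $a\,\mathcal E(f)$ from the first region and a tail of order $a^{-d/2}$ from the second (the paper bounds that tail via $x\log(1/x)\leq\tfrac2e\sqrt x$ rather than your monotonicity argument, an immaterial difference), followed by optimisation in $a$. Note that you inherit the paper's exponent here: balancing $a\,\mathcal E+Ca^{-d/2}$ yields $C\,\mathcal E^{d/(d+2)}=C\,\mathcal E^{3/5}$ in dimension $3$, not $\mathcal E^{5/3}$, and neither power dominates the other uniformly in $\mathcal E$; this is harmless for the later applications (which only use bounded energy) but the optimisation does not literally produce the stated $5/3$. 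For \eqref{inequality_mathcalE}, by contrast, you take a genuinely different and more elementary route. The paper writes $\mu_k(f)\geq R^k\bigl(1-\int_{B_R}(m_1^{k/2}f_1+m_2^{k/2}f_2)\,dv\bigr)$, controls the mass on $B_R$ by a Young-type inequality against $\mathbf H(f)$, and optimises $R$ through the Lambert $W$ function; you instead apply the Gibbs inequality to the probability densities $g_i=m_if_i$ against the centred Gaussian of second moment $E_i=\int g_i|v|^2\,dv\leq\mathcal E(f)$, obtaining $\int g_i\log g_i\,dv\geq-\tfrac d2\log(2\pi eE_i/d)$ and hence $\mathbf H(f)\geq\mathcal H(f)\geq\tfrac d2\bigl(\tfrac1{m_1}+\tfrac1{m_2}\bigr)\log(1/\mathcal E(f))-C'$. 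This is correct (the relative entropy is well defined since $g\log(g/M)\geq-M/e$ pointwise) and, after exponentiation, gives the coefficient $\tfrac2d\,\tfrac{m_1m_2}{m_1+m_2}$ in the exponent, which is smaller than $\tfrac43(m_1+m_2)$; since $\mathbf H\geq0$, your bound is stronger than \eqref{inequality_mathcalE} and implies it. What the paper's heavier argument buys in exchange is the general-$k$ moment estimate \eqref{inequality_muk} and the explicit ball-concentration bound, which are recorded as a corollary and reused later; your Gaussian comparison delivers the $k=2$ case directly but not that intermediate statement.
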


\begin{proof}
	\par \textit{Inequality \eqref{inequality_mathbfH}.} We first prove \eqref{inequality_mathbfH}. Let us  consider the sets $\mathcal{K}_i = \{v \in \R^d \, \text{s.t.} \, f_i(v) < 1 \}$, thus
	\begin{align*}
		\mathbf{H}(f) &=  \int_{\R^d \setminus \mathcal{K}_1} f_1(v) \log f_1(v) \, dv +  \int_{\R^d \setminus \mathcal{K}_2} f_2(v) \log f_2(v) \, dv \\  & - \int_{\mathcal{K}_1} f_1(v) \log f_1(v) \, dv -  \int_{\mathcal{K}_2} f_2(v) \log f_2(v) \, dv \\
		&=  \mathcal{H}(f) + 2 \int_{\mathcal{K}_1} f_1(v) \log \left(\frac{1}{f_1(v)} \right) \, dv + 2  \int_{\mathcal{K}_2} f_2(v) \log \left(\frac{1}{f_2(v)} \right) \, dv.
	\end{align*}
	For any $\nu > 0$, let us define 
	\begin{align*}
		\mathcal{W}_i = \{v \in \R^d \, \text{s.t.} \, f_i(v) \geq \exp(- m_i^{k/2} \, \nu |v|^k \} \qquad i \in \{1, 2\}.
	\end{align*}
	If $v \in \mathcal{K}_i \cap \mathcal{W}_i$, then $\log (f_i^{-1}(v)) \leq m_i^{k/2} \, \nu |v|^k$, then
	\begin{align*}
		\mathbf{H}(f) &\leq \mathcal{H}(f) + 2 \nu \int_{\mathcal{K}_1 \cap \mathcal{W}_1} m_1^{k/2} f_1(v) |v|^k \, dv + 2 \int_{\mathcal{K}_1 \cap \mathcal{W}_1^c} f_1(v) \log \left(\frac{1}{f_1(v)} \right) \, dv \\
		&\phantom{\leq}+ 2 \nu \int_{\mathcal{K}_2 \cap \mathcal{W}_2} m_2^{k/2} f_2(v) |v|^k \, dv + 2 \int_{\mathcal{K}_2 \cap \mathcal{W}_2^c} f_2(v) \log \left(\frac{1}{f_2(v)} \right) \, dv \\
		&\leq \mathcal{H}(f) + 2 \nu \mu_k + \frac{4}{e} \left[ \int_{\mathcal{K}_1 \cap \mathcal{W}_1^c} \sqrt{f_1(v)} \, dv + \int_{\mathcal{K}_2 \cap \mathcal{W}_2^c} \sqrt{f_2(v)} \, dv \right] \\ 
		&\leq \mathcal{H}(f) + 2\nu \mu_k + \frac{4}{e} \int_{\R^d} \exp \left( -\nu (m_1^{k/2}+m_2^{k/2}) \frac{|v|^k}{2} \right) \, dv,
	\end{align*}
	where we have used that $x\log(1/x) \leq \frac{2}{e}\, \sqrt{x}$ and the expression of $\mu_k$ defined in \eqref{moments_expression_mu}. Computing the last term explicitly, we have
	\begin{equation*}
		\int_{\R^d} \exp \left( -\nu (m_1^{k/2}+m_2^{k/2}) \frac{|v|^k}{2} \right) \, dv = \frac{|\mathbb{S}^n|}{k} \left(\frac{2}{\nu(m_1^{k/2}+m_2^{k/2})} \right)^{\frac{n}{k}} \Gamma \left(\frac{n}{k} \right),
	\end{equation*}
	where $\Gamma(\cdot)$ is the Gamma function. Then, for any $k \in (0,n)$, there exists a positive constant $C_{n,k,m_1,m_2}$ such that
	\begin{equation*}
		\mathbf{H}(f) \leq \mathcal{H}(f) + 2\nu \mu_k + C_{n,k,m_1,m_2} \nu^{-\frac{n}{k}}.
	\end{equation*}
	Maximizing the parameter $\nu>0$ we obtain that
	\begin{equation*}
		\mathbf{H}(f) \leq \mathcal{H}(f) + C (\mathcal{E}(f))^{\frac{n+k}{n}},
	\end{equation*}
	where $C$ depends both on $n$, $k$ and the masses $m_1$, $m_2$ and we conclude the step choosing $n=3$ and $k=2$.\\
	\textit{Inequality \eqref{inequality_mathcalE}.} Now we have to prove that
	\begin{equation}
		\mathcal{E}(f) \geq K \exp \left( -\frac{4}{3} \mathbf{H}(f) (m_1+m_2)\right)
	\end{equation}
	We shall prove the inequality for a general moment of the form
	\begin{equation}
		\label{inequality_muk}
		\mu_k(f) \geq C(n,k,\eps) \exp \left(-\frac{k}{n(1-\eps)} \mathbf{H}(f) \right).
	\end{equation}
	We denote $B_R$ the ball with center in the origin and radius $R >0$, then we have
	\begin{equation}
		\label{inequality_momentum_entropy_energy}
		\begin{aligned}
			\mu_k(f) = \int_{B_R} (m_1^{k/2} f_1(v) + m_2^{k/2} f_2(v)) |v|^k \, dv + \int_{B_R^c} (m_1^{k/2} f_1(v) + m_2^{k/2} f_2(v)) |v|^k \, dv \\
			\geq R^k \left(1 - \int_{B_R} (m_1^{k/2} f_1(v) + m_2^{k/2} f_2(v)) dv \right)
		\end{aligned}
	\end{equation}
	We can control $\mu_k(f)$ from below if we find a control on the mass of $f$ on a suitable ball $B_R$. Using the generalized Young's inequality we obtain $\forall \lambda > 0$:
	\begin{align*}
		\int_{B_R} (m_1^{k/2} f_1(v) + m_2^{k/2} f_2(v)) \, dv \leq &\frac{m_1^{k/2}}{\lambda} \int_{B_R} f_1(v) \log f_1(v) + \frac{m_2^{k/2}}{\lambda} \int_{B_R} f_2(v) \log f_2(v) +\\ - &\frac{1}{\lambda}(\log \lambda + 1) \int_{B_R} (m_1^{k/2} f_1(v) + m_2^{k/2} f_2(v)) dv + \exp(\lambda) |B_R|. 
	\end{align*}
	Using the definition of the Boltzmann entropy for gas mixtures, we get
	\begin{equation}
		\frac{1}{\lambda}( \log \lambda + \lambda + 1) \int_{B_R} \int_{B_R} \left(m_1^{k/2} f_1(v) + m_2^{k/2} f_2(v) \right)\, dv \leq  \frac{m_1^{k/2}+m_2^{k/2}}{\lambda} \mathbf{H}(f) + \exp(\lambda) |B_R|,    
	\end{equation}
	which is equivalent to
	\begin{equation*}
		(\log(\lambda \exp(\lambda)) + 1) \int_{B_R} (m_1^{k/2} f_1(v) + m_2^{k/2} f_2(v)) \, dv \leq (m_1^{k/2}+m_2^{k/2})\mathbf{H}(f) + \lambda \exp(\lambda)|B_R|
	\end{equation*}
	Following the same strategy of \cite{alonso2013}, we set $x=\log(\lambda \exp(\lambda))$ and we assume $x>-1$. Then the last inequality could be rewritten as
	\begin{equation}
		\int_{B_R} (m_1^{k/2} f_1(v) + m_2^{k/2} f_2(v)) \, dv \leq \frac{(m_1^{k/2}+m_2^{k/2})\mathbf{H}(f) + \exp(x)|B_R|}{1+x} \qquad \forall\, x>-1.
	\end{equation}
	The minimal value of the right hand side is reached by $x_0$ such that
	\begin{equation*}
		x_0 \exp(x_0) |B_R| = (m_1^{k/2}+m_2^{k/2})\mathbf{H}(f).
	\end{equation*}
	Defining $W$ the inverse of the map $x \to x\exp(x)$, which corresponds to the Lambert function, then $x_0=W((m_1^{k/2}+m_2^{k/2})\mathbf{H}(f)/|B_R|)$ and
	\begin{multline*}
		\int_{B_R} (m_1^{k/2} f_1(v) + m_2^{k/2} f_2(v)) \leq x_0\,    \exp(x_0) |B_R| \\ = \exp\left(W \left(\frac{(m_1^{k/2}+m_2^{k/2})\mathbf{H}(f)}{|B_R|} \right) \right) |B_R|.
	\end{multline*}
	Combining this last inequality with \eqref{inequality_momentum_entropy_energy}, we obtain
	\begin{equation}
		\mu_k(f) \geq R^k \left(1- \exp\left(W \left(\frac{(m_1^{k/2}+m_2^{k/2})\mathbf{H}(f)}{|B_R|} \right) \right) |B_R|\right).
	\end{equation}
	We can now optimize the parameter $R$ and for any $\eps \in (0,1)$ we can find
	\begin{multline*}
		1- \exp\left(W \left(\frac{(m_1^{k/2}+m_2^{k/2})\mathbf{H}(f)}{|B_R|} \right) \right) |B_R|=\eps \\\iff |B_R| = (1-\eps) \exp\left(-\frac{(m_1^{k/2}+m_2^{k/2})\mathbf{H}(f)}{1-\eps} \right).
	\end{multline*}
	Thus, for any $\eps >0$, if we choose $R_0$ such that the previous equality is satisfied, then
	\begin{equation}
		\mu_k \geq \eps R_0^k.
	\end{equation}
	In particular, since $|B_{R_0}|=|\mathbb{S}^2|\,R_0^3$, then
	\begin{equation}
		\mathcal{E}(f) \geq \eps \left(\frac{3(1-\eps)}{4 \pi} \right)^{2/3} \exp \left(-\frac{2 \mathbf{H}(f)(m_1+m_2)}{3(1-\eps)} \right),
	\end{equation}
	and we conclude by choosing $\eps=1/2$.
\end{proof}
\begin{cor}
	Under the hypothesis of Proposition \ref{proposition_estimation_Hbf} we proved \eqref{inequality_muk}.
\end{cor}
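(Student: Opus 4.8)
The inequality \eqref{inequality_muk} is nothing but the general-order version of the energy lower bound \eqref{inequality_mathcalE}: the latter is recovered by taking $k=2$, $n=3$ and $\eps=1/2$ in the former. The plan is therefore to rerun the second half of the proof of Proposition \ref{proposition_estimation_Hbf} while keeping the moment order $k$ and the ambient dimension $n$ free. The first step is a localization: splitting $\mu_k(f)$ over a centered ball $B_R$ and its complement, bounding $|v|^k \geq R^k$ on $B_R^c$, and invoking the mass normalizations $\int_{\R^d} m_i f_i \, dv = 1$ gives precisely \eqref{inequality_momentum_entropy_energy}, so that it suffices to control the mass $\int_{B_R}(m_1^{k/2} f_1 + m_2^{k/2} f_2)\,dv$ from above by the entropy.

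Next I would estimate that mass through the generalized Young inequality $ab \leq a\log a - a + e^{b}$, applied with $a = f_i$ and then multiplied by $m_i^{k/2}$ and integrated over $B_R$. Summing in $i$ and inserting the definition of $\mathbf{H}(f)$ yields, for every $x>-1$,
\begin{equation*}
\int_{B_R}\left(m_1^{k/2} f_1 + m_2^{k/2} f_2\right) dv \leq \frac{(m_1^{k/2}+m_2^{k/2})\,\mathbf{H}(f) + e^{x}|B_R|}{1+x}.
\end{equation*}
The right-hand side is minimized at $x_0 = W\!\left((m_1^{k/2}+m_2^{k/2})\mathbf{H}(f)/|B_R|\right)$, with $W$ the Lambert function, which collapses the bound to $\int_{B_R}(\cdots)\,dv \leq e^{x_0}|B_R|$.

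Finally I would tune the radius: choosing $R_0$ so that the bracket in \eqref{inequality_momentum_entropy_energy} equals a fixed $\eps \in (0,1)$, i.e.\ solving $1 - e^{x_0}|B_{R_0}| = \eps$, forces
\begin{equation*}
|B_{R_0}| = (1-\eps)\exp\!\left(-\frac{(m_1^{k/2}+m_2^{k/2})\,\mathbf{H}(f)}{1-\eps}\right).
\end{equation*}
Writing $|B_{R_0}| = |B_1| R_0^n$ turns this into an explicit value of $R_0^k = (|B_{R_0}|/|B_1|)^{k/n}$, and substituting into $\mu_k(f) \geq \eps R_0^k$ produces the exponential-in-$\mathbf{H}$ lower bound \eqref{inequality_muk}, the coefficient of $\mathbf{H}(f)$ in the exponent being $\tfrac{k}{n(1-\eps)}$ times $(m_1^{k/2}+m_2^{k/2})$; with $k=2$, $n=3$, $\eps=1/2$ this is exactly the factor $\tfrac{4}{3}(m_1+m_2)$ of \eqref{inequality_mathcalE}. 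The one delicate point is the optimization step: one must check that the $x_0$ delivered by the Lambert function indeed satisfies $x_0 > -1$ and that the corresponding $R_0$ is well defined and positive, so that the constant $C(n,k,\eps)$ collecting $\eps$, $|B_1|$ and the masses remains finite and strictly positive.
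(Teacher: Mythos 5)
Your proposal is correct and follows exactly the paper's route: the corollary carries no separate argument in the paper, since the second half of the proof of Proposition \ref{proposition_estimation_Hbf} is already carried out for general $k$, $n$ and $\eps$ and only specializes to $k=2$, $n=3$, $\eps=1/2$ at the very end, which is precisely what you observe. Your reconstruction of the localization, the generalized Young inequality, the Lambert-function optimization and the tuning of $R_0$ matches the paper step for step, including the remark that the exponent in \eqref{inequality_muk} actually carries the mass factor $(m_1^{k/2}+m_2^{k/2})$.
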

Now we are ready to provide the main result of this Section, namely the generalized Haff's law for constant restitution coefficient.
\begin{thm}
	Let $e \in (0,1)$ be a constant restitution coefficient and let the initial distribution $f^{in}$ satisfy \ref{hypothesis_initial_moments} with $\mathcal{E}(f^{in}) < \infty$ and $\mathcal{H}(f^{in}) < \infty$. Let $f(t,v)$ be the unique solution to the multi-species Boltzmann equation \eqref{homogeneous_BEs} with this initial data. Then, there exist two positive constants $a, b >0$ such that
	\begin{equation}
		\label{estimation_sqrtE_thm}
		\int_0^t \sqrt{\mathcal{E}(f)}\,ds \geq \frac{1}{b} \log(1+a\,b\,t), \quad \forall\,t \geq 0.
	\end{equation}
	As a consequence,
	\begin{equation*}
		\sup \Big\{ \lambda \geq 0, \sup_{t \geq 0} (1+t)^{2 \lambda} \mathcal{E}(t) < \infty \Big\} = \inf \Big\{ \lambda > 0, \sup_{t \to \infty} (1+t)^{2 \lambda} \mathcal{E}(t) > 0 \Big\} = 1.
	\end{equation*}
\end{thm}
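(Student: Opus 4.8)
The plan is to combine the entropy production identity recalled just above with the two inequalities of Proposition \ref{proposition_estimation_Hbf}. First I would control the growth of the Boltzmann entropy. Starting from $\frac{d}{dt}\mathcal{H}(f) = -\mathcal{D}_e(f) + \mathcal{N}_e(f)$ and using that $\mathcal{D}_e(f)\geq 0$ (each integrand carries a factor $\log x - x + 1\leq 0$ against the leading minus sign), one gets $\frac{d}{dt}\mathcal{H}(f)\leq\mathcal{N}_e(f)$. Then, using the explicit constant-restitution form of $\mathcal{N}_e$, the bound $|u|\leq|v|+|v_*|$, Cauchy–Schwarz, and the normalizations $\int_{\R^d} m_i f_i = 1$, I would estimate each bilinear term such as $\int\!\!\int |u|\,f_{1*}f_1\,dv\,dv_*$ by a multiple of $\sqrt{\mathcal{E}(f)}$ (since $\int |v| f_i\,dv \leq (\int f_i)^{1/2}(\int |v|^2 f_i)^{1/2}\leq \mathcal{E}_i^{1/2}/m_i$). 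This yields $\frac{d}{dt}\mathcal{H}(f(t))\leq C_0\sqrt{\mathcal{E}(f(t))}$ with $C_0$ depending only on $e,m_1,m_2$, whence after integration $\mathcal{H}(f(t))\leq\mathcal{H}(f^{in}) + C_0\int_0^t\sqrt{\mathcal{E}(f)}\,ds$.

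Next I would convert this into a closed inequality for $u(t):=\int_0^t\sqrt{\mathcal{E}(f)}\,ds$. Since energy is non-increasing (Proposition \ref{proposition_energy_dissipation}), $\mathcal{E}(f(t))\leq\mathcal{E}(f^{in})$, so \eqref{inequality_mathbfH} gives $\mathbf{H}(f(t))\leq\mathcal{H}(f(t)) + C(\mathcal{E}(f^{in}))^{5/3}$, and combining with the previous step $\mathbf{H}(f(t))\leq C_1 + C_0\,u(t)$, where $C_1$ collects $\mathcal{H}(f^{in})$ and the energy term. Inserting this into \eqref{inequality_mathcalE} produces, with suitable positive constants $a,b$, the differential inequality $u'(t)^2=\mathcal{E}(f(t))\geq a^2 e^{-b\,u(t)}$. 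As $u'\geq 0$, this reads $u'(t)\,e^{(b/2)u(t)}\geq a$; integrating from $0$ to $t$ with $u(0)=0$ gives $e^{(b/2)u(t)}\geq 1+\tfrac{ab}{2}t$, that is the logarithmic bound \eqref{estimation_sqrtE_thm} after renaming the constants.

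For the characterization of the exponent I would use the upper bound $\mathcal{E}(t)\leq C(1+t)^{-2}$, which is the hard-sphere case $\lambda=1,\gamma=0$ of \eqref{energy_dissipation}, written as in \eqref{Haffs_Law_Hardspheres}, together with \eqref{estimation_sqrtE_thm}. For the supremum: if $\lambda\leq 1$ then $(1+t)^{2\lambda}\mathcal{E}(t)\leq C(1+t)^{2\lambda-2}$ is bounded, so the supremum is $\geq 1$; conversely, if some $\lambda>1$ gave $\sup_t(1+t)^{2\lambda}\mathcal{E}(t)=M<\infty$, then $\sqrt{\mathcal{E}(t)}\leq\sqrt{M}(1+t)^{-\lambda}$ would be integrable on $[0,\infty)$, contradicting the divergence of $\int_0^t\sqrt{\mathcal{E}}$ in \eqref{estimation_sqrtE_thm}; hence the supremum is exactly $1$. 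For the infimum: when $\lambda<1$ the upper bound forces $(1+t)^{2\lambda}\mathcal{E}(t)\to 0$, so $\lambda$ is inadmissible and the infimum is $\geq 1$, while for every $\lambda>1$ the vanishing $\limsup_{t\to\infty}(1+t)^{2\lambda}\mathcal{E}(t)=0$ would again make $\sqrt{\mathcal{E}}$ integrable and contradict \eqref{estimation_sqrtE_thm}, so every $\lambda>1$ is admissible and the infimum is $\leq 1$. Both quantities therefore equal $1$.

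The main obstacle I anticipate is the very first step: establishing $\mathcal{N}_e(f)\leq C_0\sqrt{\mathcal{E}(f)}$ with a constant depending only on the fixed data ($m_1,m_2,e$ and the conserved total mass) and uniform in time, and rigorously passing from the entropy production identity to the autonomous differential inequality for $u$. Once this is in place, the remaining arguments are an elementary ODE comparison and the two estimates of Proposition \ref{proposition_estimation_Hbf}, plus the integrability/divergence dichotomy used to pin down the exponent.
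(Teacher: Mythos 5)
Your proposal follows the paper's argument essentially step for step: the entropy production bound $\frac{d}{dt}\mathcal{H}\le \mathcal{N}_e\le C_0\sqrt{\mathcal{E}}$, its conversion through the two inequalities of Proposition \ref{proposition_estimation_Hbf} into $\sqrt{\mathcal{E}(t)}\ge a\,e^{-b\int_0^t\sqrt{\mathcal{E}}\,ds}$, the elementary ODE integration giving \eqref{estimation_sqrtE_thm}, and the integrability/divergence dichotomy against the Haff upper bound \eqref{Haffs_Law_Hardspheres} to pin the exponent at $1$. The only (harmless) deviation is in the infimum: you show directly that every $\lambda>1$ belongs to the relevant set via the divergence of $\int_0^t\sqrt{\mathcal{E}}$, whereas the paper first exhibits one admissible $\lambda_0$ and then concludes by contradiction using the complementarity with the supremum set; both routes are correct.
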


\begin{proof}
	Let us recall the evolution of the entropy for a constant restitution coefficient $e \in (0,1)$ for the hard-spheres model
	\begin{equation}
		\frac{d}{dt} \mathcal{H}(f(t)) = -\mathcal{D}_e(f(t)) + \frac{1-e^2}{e^2} \int_{\R^d \times \R^d} |u| \left[f_{1*}\,f_1 + 2 f_{2*}\,f_1 + f_{2*}\,f_2 \right] \, dv dv_*.
	\end{equation}
	In particular
	\begin{align*}
		\frac{d}{dt}\mathcal{H}(f(t)) \leq \, & \frac{1-e^2}{e^2} \int_{\R^d \times \R^d}  |u| [f_{1*} f_1 + 2 f_{2*}\,f_1 + f_{2*} f_2] \, dv \, dv_*,
	\end{align*}
	Following the strategy in \cite{alonso2013}, we can show that
	\begin{equation}
		\frac{d}{dt}\mathcal{H}(f(t)) \leq  2^{\gamma+1} \,  A \,\mu_{\frac{1}{2}}(t),
	\end{equation}
	where $\mu_k(t)$ are the k$^{\text{th}}$ order moments defined in \eqref{moments_expression_mu}. Indeed, one has
	\begin{equation*}
		\int_{\R^d \times \R^d} |u| (f_1 + f_2) (f_{1*} + f_{2*}) dv\,dv_* \leq A \int_{\R^d} m_1 |v| (m_1^{1/2} f_1 + m_2^{1/2} f_2) f_{1*}dv = 2 A  \mu_{\frac{1}{2}}(t),
	\end{equation*}
	where $A=1/\min\{m_1, m_2\}$. This implies that
	\begin{equation*}
		\frac{d}{dt} \mathcal{H}(f(t)) \leq 2 A \frac{1-e^2}{e^2} \mu_{\frac{1}{2}}(t) \leq 2 A \frac{1-e^2}{e^2} \sqrt{\mathcal{E}(t)}, \qquad \forall \, t \geq 0.
	\end{equation*}
	Integrating this inequality yields
	\begin{equation}
		\mathcal{H}(f(t)) \leq \mathcal{H}(f^{in}) + 2 A \frac{1-e^2}{e^2} \int_0^t \sqrt{\mathcal{E}(s)}\,ds \qquad \forall \, t \geq 0.
	\end{equation}
	Hence, there exists a constant $K_1 >0$ such that
	\begin{equation}
		\mathbf{H}(f(t)) \leq K_1 + 2 A \frac{1-e^2}{e^2} \int_0^t \sqrt{\mathcal{E}(s)}\,ds \qquad \forall \, t \geq 0.
	\end{equation}
	Therefore, from Proposition \ref{proposition_estimation_Hbf} there exists another constant $C>0$ such that
	\begin{equation}
		\label{inequality_E}
		\mathcal{E}(t) \geq C \exp \left(-\frac{4}{3} (m_1+m_2) K_1 - \frac{8\,(1-e^2)}{3\,e^2} (m_1+m_2) \int_0^t \sqrt{\mathcal{E}(s)}\,ds \right), \qquad \forall\,t \geq 0.
	\end{equation}
	Defining two new positive constants
	\begin{equation*}
		a := \sqrt{C} \exp \left( -\frac{2}{3} (m_1+m_2) K_1\right), \qquad b:= \left(\frac{8\,(1-e^2)}{3\,e^2} (m_1+m_2) \right), 
	\end{equation*}
	we can rewrite \eqref{inequality_E} as 
	\begin{equation*}
		\sqrt{\mathcal{E}(t)} \geq a \exp \left(-b\int_0^t \sqrt{\mathcal{E}(s)}\,ds \right), \qquad \forall\, t \geq 0,
	\end{equation*}
	and we conclude the first part of the proof.\\
	Now the aim is to prove that the only possible algebraic rate for the cooling of the temperature $\mathcal{E}(t)$ is $(1+t)^{-2}$. Let us set
	\begin{equation*}
		\Gamma_1 := \left\{ \lambda \geq 0 : \sup_{t\leq 0} (1+t)^{2\lambda} \mathcal{E}(t) < \infty \right\}.
	\end{equation*}
	Since \eqref{Haffs_Law_Hardspheres} holds, then $1 \in \Gamma_1$, which implies that $\Gamma_1$ is a non-empty set. Meanwhile, the inequality \eqref{estimation_sqrtE_thm} provides $\sup (\Gamma_1)=1$.\\
	Let us define
	\begin{equation*}
		\Gamma_2 := \left\{ \lambda \geq 0 : \limsup_{t \to \infty} (1+t)^{2\lambda} \mathcal{E}(t) >0 \right\}.
	\end{equation*} 
	The set $\Gamma_2$ is non-empty, since it holds
	\begin{equation}
		\label{inequality_E_Gamma2}
		\mathcal{E}(t) \geq r_2 (1+t)^{-2\lambda_0},
	\end{equation}
	for a constant $\lambda_0 >0$. This could be easily verified starting from \eqref{inequality_mathbfH} which, under the hypothesis of bounded $\mathcal{E}(t)$ implies
	\begin{equation*}
		\mathcal{H}(f(t)) \leq k_0 + K_0 \log(1+t), \qquad \forall t \geq 0,
	\end{equation*} 
	where $k_0, K_0 > 0$. Using again Proposition \ref{proposition_estimation_Hbf} there exists another constant $C>0$ such that
	\begin{equation*}
		\mathcal{E}(t) \geq C \exp \left(-\frac{4}{3} (m_1+m_2) \mathbf{H}(f(t)) \right) \geq c \left(-\frac{4 K_0}{3} (m_1+m_2) \log (1+t) \right),
	\end{equation*}
	where $c= C \exp(-4 k_0 (m_1+m_2) /3)$. Therefore for the choice $\lambda_0=\frac{2 K_0}{3} (m_1+m_2)$, we get \eqref{inequality_E_Gamma2}.\\
	Now, we would like to show that $\inf (\Gamma_2)=1$ by contradiction. Let us suppose that $\inf (\Gamma_2) = \bar{\lambda}$, then $\bar{\lambda} > 1$ for \eqref{Haffs_Law_Hardspheres}. This implies that for every $\lambda \in (1,\bar{\lambda})$ $\lambda \notin \Gamma_2$ and consequently $\lambda \in \Gamma_1$. This is impossible since $\sup (\Gamma_1) = 1$ and thus we conclude by contradiction.
\end{proof}

\begin{rem}
	In the case of constant restitution coefficients, we cannot give a precise estimation on the constant in the generalized Haff's law, but we are able to proe that $(1+t)^{-2}$ is the only possible algebraic rate for $\mathcal{E}(t)$.\\
	For a non constant restitution coefficient in a weakly inelastic regime, it is possible to prove a stronger result. Indeed, the generalized Haff's law for a restitution coefficient which satisfies $1-e(|u|) \simeq \alpha |u|^\gamma$ follows from Proposition \ref{proposition_estimation_Hbf} and \cite[Theorem $3.7$]{alonso2010}. The only difference in the proof between the single-species case and the multi-species one, relies on the use of conservation properties, which now hold for the total moments and not for the partial ones.
\end{rem}

    \section*{Acknowledgement}
    TR and TT received funding from the European Union's Horizon Europe research and innovation program under the Marie Sklodowska-Curie Doctoral Network DataHyKing (Grant No. 101072546), and are supported by the French government, through the UniCA$_{JEDI}$ Investments in the Future project managed by the National Research Agency (ANR) with the reference number ANR-15-IDEX-01.

\appendix

\label{Appendix_Orlicz}
\section{Orlicz Spaces Framework}
\stoptoc
In this appendix we detail the functional framework used for the results in Section \ref{Section_MischlerMouhot}, providing some results about Orlicz spaces. These results can be found in \cite{raoren1991} or in the Appendix of \cite{mischlermouhot2006}.\\
Let $\Lambda: \R_+ \to \R_+$ be a function $C^2$ strictly increasing, convex and such that
\begin{equation}
	\label{Appendix_Assumption_Orlicz1}
    \Lambda(0)=\Lambda'(0)=0,
\end{equation}
\begin{equation}
	\label{Appendix_Assumption_Orlicz2}
    \Lambda(2t) \leq c_\Lambda \Lambda(t), \quad \forall \, t \geq 0,
\end{equation}
for a positive constant $c_\Lambda$, and which is superlinear, namely
\begin{equation}
	\label{Appendix_Assumption_Orlicz3}
    \frac{\Lambda(t)}{t} \to_{t \to +\infty} + \infty.
\end{equation}
We can define $L^\Lambda$ the set of measurable functions $f: \R^N \to \R$ such that
\begin{equation}
    \int_{\R^N} \Lambda(|f(v)|) dv < + \infty.
\end{equation}
This is a Banach space for the norm
\begin{equation}
    ||f||_{L^\Lambda} = \inf \left\{ \lambda > 0 \text{ s. t. } \int_{\R^N} \Lambda \left( \frac{|f(v)|}{\lambda} \right) dv \leq 1 \right\},
\end{equation}
which is called \textit{Orlicz space} associated to the function $\Lambda$.\\
As underlined in \cite{mischlermouhot2006}, a refined version of the De la Vallée-Poussin Theorem ensures that for any $f \in L^1(\R^N)$ there exists a function $\Lambda$ satisfying all the properties above and such that
\begin{equation*}
    \int_{\R^N} \Lambda(|f(v)|)\,dv < + \infty
\end{equation*}
For the sake of completeness, we recall the statement of the De la Vallée-Poussin Theorem \cite[Theorem 24-II]{dellacherie1978}.
\begin{thm}[De la Vallée-Poussin]
	\label{de_la_vallee_poussin}
	Let $\mathcal{H}$ be a subset of $L^1$. The following properties are equivalent:
	\begin{enumerate}
		\item $\mathcal{H}$ is uniformly integrable.
		\item There exists a positive function $\Theta$ defined on $\R_+$ such that $\Theta$ is superlinear and
		\begin{equation}
			\sup_{f \in \mathcal{H}} \mathbb{E}[G \circ |f|] < \infty.
		\end{equation}
	\end{enumerate}
\end{thm}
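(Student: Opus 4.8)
The plan is to prove the two implications of the equivalence separately, with essentially all of the work concentrated in the direction $(1)\Rightarrow(2)$. Throughout I read the function called $\Theta$ (written $G$ in the displayed formula) as a non-negative, non-decreasing, convex function on $\R_+$ with $\Theta(0)=0$ that is \emph{superlinear}, i.e.\ $\Theta(t)/t \to +\infty$ as $t \to +\infty$, and I interpret $\mathbb{E}[\Theta\circ|f|] = \int \Theta(|f|)\,d\mu$ on the underlying probability space.

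For the easy direction $(2)\Rightarrow(1)$, I would set $M := \sup_{f\in\mathcal{H}} \int \Theta(|f|)\,d\mu < \infty$. Given $\eps>0$, superlinearity lets me choose a threshold $c>0$ such that $t \leq (\eps/M)\,\Theta(t)$ for all $t\geq c$. Then for every $f\in\mathcal{H}$ one has $\int_{\{|f|>c\}}|f|\,d\mu \leq (\eps/M)\int\Theta(|f|)\,d\mu \leq \eps$, and since $c$ does not depend on $f$ this is exactly uniform integrability. This part is routine.

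For $(1)\Rightarrow(2)$ the idea is to manufacture $\Theta$ as the primitive of a slowly growing step function whose slopes are dictated by the speed at which the tails decay. I would introduce the tail functional $c_n := \sup_{f\in\mathcal{H}} \int_{\{|f|\geq n\}}|f|\,d\mu$, so that uniform integrability is precisely the statement $c_n\to 0$. Extracting an increasing sequence of integers $n_0<n_1<n_2<\cdots$ with $c_{n_k}\leq 2^{-k}$, I let $g$ be the non-decreasing step function equal to $k$ on $[n_k,n_{k+1})$, and define $\Theta(t)=\int_0^t g(s)\,ds$. Convexity is immediate from monotonicity of $g$, and the bound $\Theta(t)/t \geq g(t/2)/2 \to \infty$ yields superlinearity.

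The crucial estimate, and the step I expect to be the main obstacle, is controlling $\int\Theta(|f|)\,d\mu$ uniformly over $\mathcal{H}$. Here I would use the layer-cake identity $\int\Theta(|f|)\,d\mu = \int_0^\infty g(s)\,\mu(\{|f|>s\})\,ds$, split over the blocks $[n_k,n_{k+1})$, and bound each block by $k\int_{n_k}^\infty \mu(\{|f|>s\})\,ds = k\int (|f|-n_k)_+\,d\mu \leq k\,c_{n_k} \leq k\,2^{-k}$. Summing $\sum_k k\,2^{-k}<\infty$ gives a bound independent of $f$, so $\sup_{f\in\mathcal{H}}\int\Theta(|f|)\,d\mu<\infty$. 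The delicate point is reconciling two competing demands on $g$: its values must diverge so that $\Theta$ is genuinely superlinear, yet the summability $\sum_k k\,c_{n_k}<\infty$ must survive; the dyadic choice $c_{n_k}\leq 2^{-k}$ is what makes both hold simultaneously, and a faster-decaying majorant of $c_n$ would license correspondingly faster-growing slopes.
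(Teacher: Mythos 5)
Your proof is correct, but there is nothing in the paper to compare it against: the authors do not prove this statement at all. It is a classical result recalled verbatim in the appendix, with a citation to Dellacherie--Meyer \cite{dellacherie1978} (Theorem 24-II), and it is only used as a black box to justify the choice of the Orlicz weight $\Lambda$ and the weak $L^1$ compactness in the existence argument. What you have written is the standard textbook proof of the De la Vall\'ee-Poussin criterion, and both directions check out: the easy implication via the threshold $t\leq(\eps/M)\Theta(t)$ is fine, and in the hard direction the layer-cake computation $\int\Theta(|f|)\,d\mu=\int_0^\infty g(s)\,\mu(\{|f|>s\})\,ds$ together with the block bound $k\int_{n_k}^\infty\mu(\{|f|>s\})\,ds=k\int(|f|-n_k)_+\,d\mu\leq k\,c_{n_k}\leq k\,2^{-k}$ is exactly the right estimate, and your superlinearity bound $\Theta(t)/t\geq g(t/2)/2$ is valid since $g$ is non-decreasing and unbounded. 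Two cosmetic remarks: the statement as printed in the paper has a typo ($G$ in the display should be $\Theta$), and your $\Theta$ vanishes on $[0,n_1)$, so it is non-negative rather than strictly positive; this matches the usual reading of ``positive'' here, and can in any case be repaired by adding $t\mapsto t$ without affecting either convexity, superlinearity, or the uniform bound.
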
 
Moreover, we recall the Dunford-Pettis compactness criterion \cite[Theorem 27-II]{dellacherie1978}, which is fundamental for the proof of the existence of solutions to the multi-species inelastic Boltzmann equation.
\begin{thm}[Dunford-Pettis compactness criterion]
	\label{dunford_pettis}
	Let $\mathcal{H}$ be a subset of the space $L^1$. The following three properties are equivalent:
	\begin{enumerate}
		\item $\mathcal{H}$ is uniformly integrable.
		\item $\mathcal{H}$ is relatively compact in $L^1$ with the weak topology $\sigma(L^1, L^\infty)$.
		\item Every sequence of elements of $\mathcal{H}$ contains a subsequence which converges in the sense of the topology $\sigma(L^1, L^\infty)$.
	\end{enumerate}
\end{thm}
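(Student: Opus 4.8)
The three conditions are stated on a probability space $(\Omega,\mathcal{F},\mathbb{P})$ as in Theorem \ref{de_la_vallee_poussin}, and I would prove the chain $(1)\Rightarrow(2)\Rightarrow(3)\Rightarrow(1)$, isolating the single genuinely measure-theoretic step. The equivalence $(2)\Leftrightarrow(3)$ is not particular to $L^1$: the Eberlein–Šmulian theorem asserts that in any Banach space a set is relatively weakly compact precisely when it is relatively weakly sequentially compact, so I would invoke it and focus all the work on the two implications tying uniform integrability to weak compactness.

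For $(1)\Rightarrow(2)$ the plan is to compute inside the bidual. Since $(L^1)^{*}=L^\infty$ and $(L^\infty)^{*}=\mathrm{ba}(\Omega,\mathcal{F},\mathbb{P})$ is the space of bounded finitely additive set functions that vanish on $\mathbb{P}$-null sets, $L^1$ sits canonically and isometrically inside this bidual. A uniformly integrable family is norm bounded in $L^1$, so by Banach–Alaoglu its weak-$*$ closure $\overline{\mathcal{H}}^{\,w*}$ in $(L^\infty)^{*}$ is weak-$*$ compact; and a standard criterion identifies relative weak compactness of $\mathcal{H}$ in $L^1$ with the inclusion $\overline{\mathcal{H}}^{\,w*}\subset L^1$. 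I would establish this inclusion directly: for $\eps>0$ uniform integrability furnishes $\delta>0$ with $\mathbb{P}(A)<\delta\Rightarrow\sup_{f\in\mathcal{H}}\int_A|f|\,d\mathbb{P}\le\eps$, and this uniform absolute continuity passes to any weak-$*$ cluster point $\Phi$, so that $\mathbb{P}(A)<\delta\Rightarrow|\Phi(\mathbf{1}_A)|\le\eps$. Because $\mathbb{P}$ is countably additive, $A_n\downarrow\emptyset$ forces $\mathbb{P}(A_n)\to0$ and hence $\Phi(\mathbf{1}_{A_n})\to0$; thus the finitely additive $\Phi$ is in fact countably additive and absolutely continuous, and the Radon–Nikodym theorem represents it by a density in $L^1$. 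Hence $\overline{\mathcal{H}}^{\,w*}\subset L^1$ and $\mathcal{H}$ is relatively weakly compact.

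For the converse $(2)\Rightarrow(1)$ I would argue by contradiction. If $\mathcal{H}$ is not uniformly integrable there are $\eps_0>0$, functions $f_n\in\mathcal{H}$ and sets $A_n$ with $\mathbb{P}(A_n)\to0$ yet $\int_{A_n}|f_n|\,d\mathbb{P}\ge\eps_0$; using $(3)$ I pass to a subsequence with $f_n\rightharpoonup f$ weakly in $L^1$. Testing against indicators shows that the finite measures $\nu_n(\cdot)=\int_{\cdot}f_n\,d\mathbb{P}$ converge setwise on $\mathcal{F}$, so the Vitali–Hahn–Saks theorem yields their uniform absolute continuity; consequently $\int_{A_n}|f_n|\,d\mathbb{P}\to0$ as $\mathbb{P}(A_n)\to0$, contradicting the lower bound $\eps_0$. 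The engine here — either the Vitali–Hahn–Saks theorem, or, if one prefers a self-contained route, a gliding-hump disjointification of the bad pieces of the $f_n$ — is the main obstacle of the converse. Overall, the heart of the theorem is the upgrade of finite additivity to countable additivity for the weak-$*$ cluster points in $(L^\infty)^{*}$, after which Radon–Nikodym and Eberlein–Šmulian assemble the three equivalences.
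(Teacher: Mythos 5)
The paper itself offers no proof of Theorem \ref{dunford_pettis}: it is quoted verbatim from Dellacherie and Meyer \cite{dellacherie1978} as a classical tool, so your argument can only be measured against the standard literature proof, and your sketch is indeed that standard proof, essentially correct in the probability-space setting you fix at the outset: Eberlein--\v{S}mulian for $(2)\Leftrightarrow(3)$, the identification of relative weak compactness of a bounded set $\mathcal{H}\subset L^1$ with the inclusion of its weak-$*$ closure in $(L^\infty)^*=\mathrm{ba}$ into $L^1$, the upgrade of weak-$*$ cluster points from finite to countable additivity via uniform absolute continuity, Radon--Nikodym, and Vitali--Hahn--Saks for the converse. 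Three small gaps should be filled to make it complete. First, Radon--Nikodym represents $\Phi$ by a density on indicators only; you must extend to all of $L^\infty$ by linearity and density of simple functions in the uniform norm, using the norm continuity of $\Phi$. Second, in the converse, Vitali--Hahn--Saks controls the signed values $\nu_n(A)=\int_A f_n\,d\mathbb{P}$, whereas your contradiction hypothesis bounds $\int_{A_n}|f_n|\,d\mathbb{P}$ from below; you need the elementary estimate $\int_A|f_n|\,d\mathbb{P}\le 2\sup_{B\subseteq A}|\nu_n(B)|$ (split into positive and negative parts) to transfer the uniform absolute continuity to the variations. Third, the negation of uniform integrability is the failure of norm boundedness \emph{or} of uniform absolute continuity; your sets $A_n$ address only the latter, so you should note that relatively weakly compact sets are norm bounded, which disposes of the former.

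One caveat is worth recording, since the paper invokes the criterion for distribution functions on $\R^d$ in the proof of Theorem \ref{existence_thm}: on an infinite measure space the implication $(1)\Rightarrow(2)$ fails as you have set it up. The translates $f_n=\mathds{1}_{[n,n+1]}$ in $L^1(\R)$ satisfy $\int_{\{|f_n|>c\}}|f_n|\,dv=0$ for every $c>1$, yet no subsequence converges weakly: any weak limit would vanish by testing against compactly supported bounded functions, contradicting $\int_\R f_n\,dv=1$. Hence on $\R^d$ uniform integrability must be supplemented by tightness, $\sup_{f\in\mathcal{H}}\int_{|v|\ge R}|f|\,dv\to 0$ as $R\to\infty$, which in the application of Section \ref{Section_MischlerMouhot} is supplied by the uniform moment bounds. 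Your restriction to a probability space matches the cited source and makes the proof correct as written, but the bridge to the paper's actual use of the theorem deserves a sentence.
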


\subsection{H\"older's inequality in Orlicz spaces}
Let $\Lambda$ be a $C^2$ function, strictly increasing, convex satisfying \eqref{Appendix_Assumption_Orlicz1}-\eqref{Appendix_Assumption_Orlicz2}-\eqref{Appendix_Assumption_Orlicz3}. Let us define $\Lambda^*$ its complementary Young function, namely
\begin{equation*}
	\Lambda^*(y) = y\,\left(\Lambda'\right)^{-1}(y) - \Lambda \left(\left( \Lambda' \right)^{-1} (y)\right) \qquad \forall \, y \geq 0.
\end{equation*}
The Young's inequality holds
\begin{equation*}
	xy \leq \Lambda(x) + \Lambda^*(y), \qquad \forall\, x,\,y \geq 0.
\end{equation*}
Hence, we can define the following norm on the Orlicz space $L^{\Lambda^*}$:
\begin{equation}
	\label{norm_N^*}
	N^{\Lambda^*} (f) = \sup \left\{ \int_{\R^N} |f\,g|\,dv \text{ s.t } \int_{\R^N} \Lambda \left(|g(v)| \right) dv \leq 1 \right\}.
\end{equation}
We can give the following result, essential tool for the estimates of the collision operator in Orlicz space.
\begin{thm}[\cite{raoren1991}, Chapter III]~ 
	\begin{enumerate}[(i)]
		\item The following H\"older's inequality holds for any $f \in L^\Lambda$, $g \in L^{\Lambda^*}$:
		\begin{equation*}
			 \int_{\R^N} |f\,g|\,dv \leq ||f||_{L^\Lambda} N^{\Lambda^*} (g).
		\end{equation*}
		\item The equality holds if and only if there is a constant $k_* \in (0,+\infty)$ such that
		\begin{equation*}
			\left(\frac{|f|}{||f||_{L^\Lambda}} \right) \left(\frac{k_*\,|g|}{N^{\Lambda^*}(g)} \right) = \Lambda \left(\frac{|f|}{||f||_{L^\Lambda}} \right) + \Lambda^* \left(\frac{k_*\,|g|}{N^{\Lambda^*}(g)} \right),
		\end{equation*}
		for a.e. $v \in \R^N$.
	\end{enumerate}
\end{thm}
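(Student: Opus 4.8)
The plan is to derive both parts from the single pointwise Young inequality $xy \le \Lambda(x) + \Lambda^*(y)$ together with the duality already built into the definition \eqref{norm_N^*} of $N^{\Lambda^*}$. First I would dispose of part (i). The cases $||f||_{L^\Lambda} = 0$ or $N^{\Lambda^*}(g) = 0$ are trivial, so assume both positive and set $\tilde f := |f|/||f||_{L^\Lambda}$. The only analytic input needed is the normalization of the Luxemburg norm, $\int_{\R^N}\Lambda(\tilde f)\,dv \le 1$, which follows by taking $\lambda \downarrow ||f||_{L^\Lambda}$ in the infimum defining the norm and invoking monotone convergence together with the continuity of $\Lambda$; under the growth condition \eqref{Appendix_Assumption_Orlicz2} this is in fact an equality $\int\Lambda(\tilde f) = 1$. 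Since $\tilde f$ then lies in the admissible class $\{h : \int\Lambda(|h|)\le 1\}$ over which the supremum \eqref{norm_N^*} is taken, the definition of $N^{\Lambda^*}(g)$ gives directly $\int |\tilde f\,g|\,dv \le N^{\Lambda^*}(g)$; multiplying by $||f||_{L^\Lambda}$ yields (i). No loss of constant occurs precisely because $N^{\Lambda^*}$ is the dual (Orlicz) norm rather than a second Luxemburg norm.

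For part (ii) I would exploit that Young's inequality is saturated, $xy = \Lambda(x) + \Lambda^*(y)$, exactly when $y = \Lambda'(x)$ — the Legendre equality condition, valid since $\Lambda \in C^1$ and $\Lambda'$ is the increasing bijection used to build $\Lambda^*$. Write $N := N^{\Lambda^*}(g)$ and let $k_0$ be the minimizer in the Amemiya representation $N = \inf_{k>0}\tfrac1k\big(1 + \int_{\R^N}\Lambda^*(k|g|)\,dv\big)$ of the Orlicz norm, so that $1 + \int\Lambda^*(k_0|g|)\,dv = k_0 N$; set $k_* := k_0 N$ and $G := k_*|g|/N = k_0|g|$. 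The integrand $\Lambda(\tilde f) + \Lambda^*(G) - \tilde f\,G$ is nonnegative pointwise by Young, and integrating it — using $\int\Lambda(\tilde f) = 1$ and $\int\Lambda^*(G) = k_* - 1$ — gives exactly $\tfrac{k_*}{||f||_{L^\Lambda}\,N}\big(||f||_{L^\Lambda}\,N - \int|fg|\,dv\big)$. Hence, by part (i), equality in Hölder is equivalent to this nonnegative integrand having zero integral, i.e. to its vanishing a.e., which is precisely the pointwise identity in the statement with $x = \tilde f$ and $y = G = k_*|g|/N$. The converse lies in the same chain of equivalences: one checks that pointwise saturation forces $|g| = (N/k_*)\Lambda'(\tilde f)$ and, feeding this back into the Amemiya functional, that $k_0 = k_*/N$ is indeed its critical point (the stationarity condition collapses to an identity once $(\Lambda^*)' = (\Lambda')^{-1}$ is used), so the infimum is genuinely attained there and the constant $1$ is sharp.

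The main obstacle is exactly this attainment and the matching of the scaling constant in part (ii): one must know that the infimum defining $N^{\Lambda^*}(g)$ is achieved at a finite, strictly positive $k_0$, and that this $k_0$ is the same constant produced by pointwise Young equality. This is where the structural hypotheses are essential — the $\Delta_2$-type condition \eqref{Appendix_Assumption_Orlicz2} (and the corresponding growth of the complementary function $\Lambda^*$) to keep the relevant integrals finite and the minimizer interior, and the $C^2$ structure of $\Lambda$, with $\Lambda'$ strictly increasing as already required to define $\Lambda^*$ through $(\Lambda')^{-1}$, to make $\Lambda'$ a bijection so that the dual maximizer is unique and the relation $y = \Lambda'(x)$ can be inverted. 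The remaining measure-theoretic step, that a nonnegative function of vanishing integral is zero almost everywhere, is routine and I would not dwell on it.
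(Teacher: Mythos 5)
The paper itself offers no proof of this statement: it is quoted directly from Rao--Ren \cite{raoren1991}, so there is no in-paper argument to compare against, and your proposal is in effect a reconstruction of the standard textbook proof. Your part (i) is complete and correct: since $N^{\Lambda^*}$ is \emph{defined} in \eqref{norm_N^*} by duality against the modular unit ball, the inequality reduces to the normalization $\int_{\R^N}\Lambda\bigl(|f|/\|f\|_{L^\Lambda}\bigr)\,dv\le 1$, which your monotone-convergence argument along $\lambda_n\downarrow\|f\|_{L^\Lambda}$ supplies, and your remark that the constant is $1$ (rather than the $2$ of the two-Luxemburg-norm H\"older inequality) precisely because one side carries the dual norm is accurate. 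In part (ii) the two directions are asymmetric in a way worth making explicit. The ``if'' direction needs only the \emph{easy} half of the Amemiya bound, $N^{\Lambda^*}(g)\le \frac{1}{k}\bigl(1+\int_{\R^N}\Lambda^*(k|g|)\,dv\bigr)$ for all $k>0$ — itself a one-line consequence of Young plus \eqref{norm_N^*} — together with the modular \emph{equality} $\int_{\R^N}\Lambda\bigl(|f|/\|f\|_{L^\Lambda}\bigr)\,dv=1$, for which the $\Delta_2$ condition \eqref{Appendix_Assumption_Orlicz2} is genuinely used, as you note. Moreover, attainment is automatic in this direction: integrating the saturated Young identity gives $\int_{\R^N}|fg|\,dv=\|f\|_{L^\Lambda}\,A(k_*/N)$ with $A(k):=\frac{1}{k}\bigl(1+\int_{\R^N}\Lambda^*(k|g|)\,dv\bigr)$ and $N:=N^{\Lambda^*}(g)$, and part (i) then forces $A(k_*/N)=N$, so you need not assume anything about the minimizer here.

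The ``only if'' direction is where the weight sits, and there your argument is not self-contained: the identity $1+\int_{\R^N}\Lambda^*(k_0|g|)\,dv=k_0\,N$ presupposes not merely attainment of the Amemiya infimum but the nontrivial half of the duality $\inf_{k>0}A(k)=N^{\Lambda^*}(g)$ — Young's inequality alone only yields $A(k)\ge N$ for every $k$. If $A(k_0)>N$, your defect computation returns $k_0\bigl(A(k_0)-N\bigr)>0$ rather than $0$, and the pointwise conclusion evaporates. You correctly identify attainment as the crux and sketch the right mechanism (superlinearity of $\Lambda$ and hence of $\Lambda^*$, invertibility of $\Lambda'$, $\Delta_2$ keeping the modulars finite, with the interior minimizer characterized by $\int_{\R^N}\Lambda\bigl((\Lambda')^{-1}(k_0|g|)\bigr)\,dv=1$), but the no-gap identity itself is left to the literature. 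Since that duality-with-attainment is exactly the content of the cited chapter of \cite{raoren1991}, invoking it matches the paper's own level of rigor; just be aware that if this proof were meant to stand alone, the equality $\inf_k A(k)=N^{\Lambda^*}(g)$ is the one substantive ingredient you would still have to prove.
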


\subsection{Differential in Orlicz norms}
The last technical result we need to propagate bounds on Orlicz norms along the flow of the Boltzmann equation is the following.
\begin{thm}
	\label{theorem_differentiation_orlicz}
	Let $\Lambda$ be a $C^2$ function, strictly increasing, convex satisfying \eqref{Appendix_Assumption_Orlicz1}-\eqref{Appendix_Assumption_Orlicz2}-\eqref{Appendix_Assumption_Orlicz3} and let $f \in C^1([0,T], L^\Lambda)$ be a positive function such that $f \not\equiv 0$. Then we have
	\begin{equation*}
		\frac{d}{dt} ||f||_{L^\Lambda} = \left[N^{\Lambda^*} \left( \Lambda' \left( \frac{|f|}{||f||_{L^\Lambda}} \right) \right) \right]^{-1} \int_{\R^N} \partial_t f \, \Lambda' \left( \frac{|f|}{||f||_{L^\Lambda}} \right) \,dv. 
	\end{equation*}
\end{thm}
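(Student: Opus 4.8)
The plan is to exploit the fact that, under the doubling condition \eqref{Appendix_Assumption_Orlicz2}, the Luxemburg norm is characterized by an \emph{equality} rather than an infimum: for every $t$ one has the modular identity
$$\int_{\R^N} \Lambda\!\left(\frac{f(t,v)}{\|f(t)\|_{L^\Lambda}}\right) dv = 1.$$
First I would justify this. Writing $N(t):=\|f(t)\|_{L^\Lambda}$ and $F(t,\lambda):=\int_{\R^N}\Lambda(f(t,v)/\lambda)\,dv$, the map $\lambda\mapsto F(t,\lambda)$ is continuous, strictly decreasing, blows up as $\lambda\to0^+$ and vanishes as $\lambda\to\infty$ (here $\Delta_2$ guarantees finiteness and continuity), so the infimum defining the norm is attained and $F(t,N(t))=1$.

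Next I would differentiate this constraint in $t$. Since $f\in C^1([0,T],L^\Lambda)$ and $\Lambda'$ has controlled growth (again via $\Delta_2$), one may differentiate under the integral sign and compute
$$\partial_\lambda F(t,\lambda) = -\frac{1}{\lambda^2}\int_{\R^N}\Lambda'\!\left(\frac{f}{\lambda}\right) f\,dv < 0, \qquad \partial_t F(t,\lambda) = \frac{1}{\lambda}\int_{\R^N}\Lambda'\!\left(\frac{f}{\lambda}\right)\partial_t f\,dv.$$
The strict negativity of $\partial_\lambda F$ (valid because $f>0$ and $\Lambda'(s)>0$ for $s>0$) lets me invoke the implicit function theorem to conclude that $N$ is $C^1$ and
$$N'(t) = -\frac{\partial_t F}{\partial_\lambda F}\bigg|_{\lambda=N(t)} = \frac{\displaystyle\int_{\R^N}\Lambda'(f/N)\,\partial_t f\,dv}{\displaystyle\frac{1}{N}\int_{\R^N}\Lambda'(f/N)\,f\,dv}.$$

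The heart of the matter is to recognize the denominator as the dual Orlicz norm of $g:=\Lambda'(f/N)$, i.e. to prove
$$N^{\Lambda^*}(g) = \frac{1}{N}\int_{\R^N}\Lambda'\!\left(\frac{f}{N}\right) f\,dv.$$
For the lower bound I would use the admissible test function $h=f/N$, which satisfies $\int\Lambda(|h|)\,dv=1$ by the modular identity, so that $N^{\Lambda^*}(g)\ge\int g\,(f/N)\,dv$. For the matching upper bound I would use the pointwise Young \emph{equality} $g\,(f/N)=\Lambda^*(g)+\Lambda(f/N)$, which holds precisely because $g=\Lambda'(f/N)$; integrating gives $\int\Lambda^*(g)\,dv=\frac1N\int gf\,dv-1$, and then for any competitor $h$ with $\int\Lambda(|h|)\,dv\le1$, Young's inequality yields $\int|gh|\,dv\le\int\Lambda^*(g)\,dv+1=\frac1N\int gf\,dv$. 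Taking the supremum over $h$ closes the bound, and substituting this identity into the formula for $N'(t)$ gives exactly the claimed expression.

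The hard part will be the rigorous justification of the differentiation under the integral sign in $\partial_t F$ and $\partial_\lambda F$: one needs a locally uniform dominating function for the integrands $\Lambda'(f/\lambda)\,\partial_t f$ and $\Lambda'(f/\lambda)\,f$, which has to be extracted from the $C^1([0,T],L^\Lambda)$ hypothesis together with the growth control furnished by the $\Delta_2$-condition \eqref{Appendix_Assumption_Orlicz2}. Once this domination is secured, the remainder is the implicit function theorem plus the Young-equality identity above.
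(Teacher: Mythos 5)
Your argument is correct. Note that the paper itself contains no proof of Theorem \ref{theorem_differentiation_orlicz}: it is recalled from \cite{raoren1991} and the appendix of \cite{mischlermouhot2006}, and the proof given there is precisely the one you outline — the modular identity $\int_{\R^N}\Lambda\left(|f|/\|f\|_{L^\Lambda}\right)dv=1$ (valid under the $\Delta_2$-condition \eqref{Appendix_Assumption_Orlicz2}), implicit differentiation of this constraint in $t$, and identification of the normalizing factor $\frac{1}{N}\int f\,\Lambda'(f/N)\,dv$ with $N^{\Lambda^*}\left(\Lambda'(f/N)\right)$ via the equality case of Young's inequality, which is exactly item (ii) of the H\"older theorem the paper states just above. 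The one step you leave as a sketch, the domination needed to differentiate under the integral sign, is correctly identified as the technical crux, and the $\Delta_2$-based bounds $x\Lambda'(x)\le\Lambda(2x)\le c_\Lambda\Lambda(x)$ and $\Lambda^*(\Lambda'(x))=x\Lambda'(x)-\Lambda(x)\le c_\Lambda\Lambda(x)$ do furnish the required integrable majorants, so there is no gap.
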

\resumetoc

\bibliography{References}
\bibliographystyle{acm}

\end{document}